      \newcommand{\N}{N}
      \newcommand{\E}{E}
      \newtheorem{thm}{Theorem}[section]
      \newtheorem{prop}[thm]{Proposition}
      \newtheorem{cor}[thm]{Corollary}
      \newtheorem{lem}[thm]{Lemma}
      \def\th@newremark{\th@remark\thm@headfont{\bfseries}}   
      \theoremstyle{definition}
      \theoremstyle{newremark}
      \newenvironment{example}
	{\pushQED{\qed}\examplex}
	{\popQED\endexamplex}
      \newenvironment{rem}
	{\pushQED{\qed}\remarkx}
	{\popQED\endremarkx}
      \newenvironment{defn}
	{\pushQED{\qed}\definitionx}
	{\popQED\enddefinitionx}
\newcommand{\eps}{\varepsilon}
\newcommand{\dx}{d}                                   
\newcommand{\dr}{\underline{\underline{r}}}                    
\DeclareMathOperator{\supp}{supp}                              
\DeclareMathOperator{\Poiss}{Poiss}                            
\DeclareMathOperator{\Exp}{Exp}                                
 \newcommand{\R}{\mathbb{R}}
 \renewcommand{\N}{\mathbb{N}}
 \renewcommand{\E}{\mathbb{E}}
\newcommand{\mcA}{\mathcal{A}}
\newcommand{\mcB}{\mathcal{B}}
\newcommand{\mcF}{\mathcal{F}}
\newcommand{\mcM}{\mathcal{M}}
\newcommand{\mcO}{\mathcal{O}}
\newcommand{\mfu}{\mathfrak{u}}
\newcommand{\mfU}{\mathfrak{U}}
\newcommand{\mfv}{\mathfrak{v}}
\newcommand{\mfw}{\mathfrak{w}}
\newcommand{\mfx}{\mathfrak{x}}
\newcommand{\mfX}{\mathfrak{X}}
\newcommand{\mfy}{\mathfrak{y}}
\newcommand{\mfY}{\mathfrak{Y}}
\newcommand{\mfz}{\mathfrak{z}}
\newcommand{\bbM}{\mathbb{M}}
\newcommand{\bbN}{\mathbb{N}}
\newcommand{\bbR}{\mathbb{R}}
\newcommand{\bbU}{\mathbb{U}}
\newcommand{\1}{1}
\newcommand{\U}{\mathbb{U}}
\DeclareMathOperator{\lemetric}{\leq_{\textup{metric}}}
\DeclareMathOperator{\lemeasure}{\leq_{\textup{measure}}}
\DeclareMathOperator{\legen}{\leq_{\textup{general}}}
\DeclareMathOperator{\LUB}{LUB}
\DeclareMathOperator{\ER}{ER}
\newenvironment{proofsteps}{\setcounter{enumi}{0}}{}
\newcommand{\step}{\refstepcounter{enumi}\removelastskip\smallskip\par\noindent\emph{Step \arabic{enumi}.} \hspace{0.5ex}}
\DeclareFontFamily{U}{mathx}{\hyphenchar\font45}
\DeclareFontShape{U}{mathx}{m}{n}{
      <5> <6> <7> <8> <9> <10>
      <10.95> <12> <14.4> <17.28> <20.74> <24.88>
      mathx10
      }{}
\DeclareSymbolFont{mathx}{U}{mathx}{m}{n}
\DeclareMathSymbol{\bigtimes}{1}{mathx}{"91}
\def\dEur{d_{\text{Eur}}}
\def\dgEur{d_{\text{gEur}}}
\numberwithin{equation}{section}  
\begin{document}

	  \title{Partial orders on metric measure spaces}
	  \author{Max Grieshammer\footnote{Institute for Mathematics, Friedrich-Alexander Universität Erlangen-Nürnberg, Germany;  	max.grieshammer@math.uni-erlangen.de, MG was supported by DFG SPP 1590}, Thomas Rippl\footnote{Institute for Mathematical Stochastics, Georg-August-Universit\"at G\"ottingen, G\"ottingen, Germany; trippl@uni-goettingen.de, TR was supported by DFG GR 876/15-1,2 of Andreas Greven}
	  }
	  
	  \maketitle
	  
	  \thispagestyle{empty}
	  \begin{abstract}
	  A partial order on the set of metric measure spaces is defined; it generalizes the Lipschitz order of Gromov.
	  We show that our partial order is closed when metric measure spaces are equipped with the Gromov-weak topology 
	  and give a new characterization for the Lipschitz order. \par 
	  We will then consider some probabilistic applications. The main importance is given to the study of Fleming-Viot processes with different resampling rates.
	  Besides that application we also consider tree-valued branching processes and two semigroups on metric measure spaces.
	  \end{abstract}

	  \noindent {\bf Keywords:} metric measure space, partial order,  tree-valued Fleming-Viot process

	  \smallskip

	  \noindent {\bf AMS 2010 Subject Classification:} Primary: 60E15, 53C23; Secondary: 60J25, 06A06.\\

%
%
%
%

\tableofcontents

\section{Introduction}

Stochastic order of random variables is particularly well-studied for random variables with values in the totally ordered space $\R$.
There are extensions to the partially ordered space $\R^d$, see \cite{shaked2007stochastic}.
Since recently the interest on random variables with values in metric spaces and metric measure spaces has grown (see \cite{EPW06} or \cite{gpw_mp}) we propose to study an order structure on metric (measure) spaces.

Thus, consider two metric spaces $(X,r_X)$ and $(Y,r_Y)$.
Is there a notion which can tell us that $(X,r_X)$ is \emph{smaller} than $(Y,r_Y)$?

We define such a notion; not on the set of metric spaces but on the set of metric measure spaces.
\footnote{For the ordering of metric spaces consider the introduction of \cite{EM14} which contains a perfect list of references.}
A metric measure space $(X,r,\mu)$ is a complete and separable metric space $(X,r)$ and a finite measure $\mu$ on (the Borel $\sigma$-field of) $X$. It is convenient to go to equivalence classes
$[X,r,\mu]$ of such metric measure spaces:
we say that a metric measure space $(X,r_X,\mu_X)$ is equivalent to a metric measure space $(Y,r_Y,\mu_Y)$ 
if we find a measure-preserving isometry $\supp(\mu_Y) \to \supp(\mu_X)$. We denote the set of such equivalence classes by $\bbM$ and write 
$\mfx = [X,r_X,\mu_X]$ and $\mfy = [Y,r_Y,\mu_Y]$ for typical elements $\mfx,\mfy \in \bbM$.
Metric measure spaces were studied in great detail in \cite{gromov} and \cite{sturm} as classical references and \cite{GPW09} and \cite{ALW14a} as probability theory related references.
One of the main reasons to prefer metric measure spaces to purely metric spaces for the ordering question are the powerful analytical tools of the former. \par 

In order to define a partial order $\legen$ on $\bbM$ we use the following two ideas: Compare masses and compare distances. I.e.\ we say 
$\mfx\legen \mfy$ if there is a Borel-measure $\mu_Y'$ on $Y$ such that $\mfx\lemetric [Y,r_Y,\mu_Y'] =:\mfy' \lemeasure \mfy$.
Here we say 
$\mfx \lemetric \mfy'$ if there is a measure-preserving sub-isometry (i.e.\ $1-$Lipschitz map) $\supp(\mu_Y')\to \supp(\mu_X)$ and we say
$\mfy' \lemeasure \mfy$ if $\mu_Y' \le \mu_Y$ (if one writes $[Y',r_Y',\mu_Y'] = \mfy'$ then this is equivalent to finding a sub-measure-preserving isometry
$\supp(\mu_Y) \to Y'$).
In easy words and leaving away details we say that a pony is smaller than a horse:
a pony is a contracted version of a horse with less weight. \par

Partial orders on metric measure spaces were already considered before.
In Section~3.$\frac{1}{2}$.15 of Gromov's book \cite{gromov} the Lipschitz order $\succ$ is defined.
There are some other articles who studied $\succ$ and we mention \cite{shioya} for a comprehensive overview.
This relation $\succ$ is identical to $\lemetric$.
So the relation $\legen$ is an \emph{extension} to $\succ$.
Moreover, we can prove the important facts for the relation $\legen$:
We show that $\legen$ is a \emph{partial order} on $\bbM$ and that $\legen$ is \emph{closed}, i.e.\ $\{(\mfx,\mfy) \in \bbM\times \bbM:\ \mfx \legen \mfy\}$ is closed in the product topology, where $\bbM$ is equipped with the Gromov-weak topology (see Definition~2.5 in \cite{ALW14a}).
Considering the partial order $\lemetric$ we provide an analytical characterization with distance matrix measures, see \eqref{rg6}.

In some cases partial orders on metric spaces $(E,r)$ are ``natural'' in the sense that the distance $r(x,y)$ for two elements $x,y \in E$ with $x \leq y$ can be expressed in a simple way.
An example of that phenomenon is the metric induced by the $1$-norm on the partially ordered space $\R^n$, $n\in \N$ with coordinate-wise ordering.
For the partial order $\legen$ we will find that it is natural if we endow $\bbM$ with the generalized Eurandom metric which is defined in \cite{MG}.

\smallskip

	There are two main applications of the partial order $\legen$.
	The first is the \emph{Cartesian semigroup} defined in \cite{EM14} and the second one is the \emph{concatenation semigroup} given in \cite{infdiv}.
	In the Cartesian semigroup any (normalized) metric measure space can be uniquely decomposed into prime factors.
	Defining that an element dominates another if its prime factors (counting multiplicity) are contained in the other we have a special instance of the $\lemetric$ situation.
	In the concatenation semigroup ultrametric measure spaces with a given upper bound for the diameter can be uniquely decomposed into prime factors.
	Defining that an element dominates another if its prime factors (counting multiplicity) are contained in the other we have a special instance of the $\lemeasure$ situation.

	This article will treat in particular probabilistic applications of the partial orders $\legen$, $\lemeasure$ and $\lemetric$ on $\bbM$.
	Lately representing the genealogy of a randomly evolving population by (ultra-) metric measure spaces has received growing interest, see \cite{gpw_mp} and descendant articles.
	The domination of genealogies (in some of the senses we defined) is a particularly interesting question as there are several situations where this is expected to occur in some way.
	The most popular among these cases is the tree-valued Fleming-Viot process with and without selection, see Theorem~5 of \cite{DGP12}.
	
	Here we give two main examples for a probabilistic application: the tree-valued Feller diffusion (Section~\ref{s.feller}) and in great more detail the tree-valued Fleming-Viot process (Section~\ref{sec.CompFV}).
	In particular it turns out that for two Fleming-Viot processes with different diffusivity $\gamma' > \gamma >0$ the Wasserstein distance of their Eurandom distance is given by  $\frac{1}{\gamma} - \frac{1}{\gamma'}$.
	But that is the difference of the expected genealogical distance of two individuals.
	We note that the coupling-results for Fleming-Viot processes are not new and can be proven using coalescent models. But on the level of trees, that have in general much more complexity than only pairwise-distances, the coupling result and the result on the distances of the random trees are new, as far as we know.

\smallskip

{\bf Outline:} In Section~\ref{s.mmspace} we give the definition of metric measure spaces and the Gromov-weak topology.
In Section~\ref{ss.legen} we present our main results on the relation $\legen$. \par 
In Section~\ref{s.further} we study the definition of $\legen$ in more details: In Section~\ref{ss.lemeasure}, the concept of smaller masses is defined and in subsection~\ref{ss.lemetric} we describe the concept of comparing distances. For the latter we characterize in Section~\ref{s.constr.lub} a set of ``least upper bounds''. Just before that we give the connections of the partial order $\legen$ to the generalized Eurandom distance. \par 
We use the above concepts to prove in Section~\ref{s.proofs} the main results. Finally we give in Section~\ref{s.applications} several probabilistic applications: The connection of the partial order to the Cartesian semigroup in~\ref{ss.EM14}, some consequences for the stochastic dominance and Wasserstein distance of random metric measure spaces (see Section~\ref{ss.stoch.dom}), an example concerning tree-valued Feller diffusions (see Section~\ref{s.feller}) and finally a result for tree-valued Fleming-Viot processes (see Section~\ref{ss.TVMM} and \ref{sec.CompFV}).

\section{Metric measure spaces}\label{s.mmspace}

\begin{defn}[Metric measure spaces]\label{d.mm}

  \begin{enumerate}
    \item We call $(X,r,\mu)$ a \emph{metric measure space} \emph{(mm space)} if
    \begin{itemize}
      \item $(X,r)$ is a complete separable metric space, where we assume that $X \subset \mathbb R$,
      \item $\mu$ is a finite measure on the Borel subsets of $X$. 
    \end{itemize}
    \item We define an equivalence relation on the collection of mm spaces as follows: Two mm spaces $(X,r_X,\mu_X)$ and $(Y,r_Y,\mu_Y)$ are \emph{equivalent} if and only if there exists a measurable map $\varphi:X\to Y$ such that
        \begin{align}
         \mu_Y & = \mu_X\circ \varphi^{-1}  \text{ and }\\
          r_X(x_1,x_2) &= r_Y(\varphi(x_1),\varphi(x_2))\quad \forall x_1,x_2\in \supp(\mu_X)
          \, ,
        \end{align}
  i.e.~$\varphi$ restricted to $\supp(\mu_X)$ is an isometry onto its image and  $\varphi$ is measure preserving.

        We denote the equivalence class of a mm space $(X,r_X,\mu)$ by $[X,r_X,\mu]$.
        \item We denote the collection of  equivalence classes of mm spaces by
        \begin{equation}
          \bbM:=\left\{[X,r,\mu]:(X,r,\mu)\text{ is mm space} \right\}
        \end{equation}
        The subset $\bbM_1 = \{ \mfx = [X,r,\mu] \in \bbM:\, \mu(X) = 1\}$ is the set of those mm spaces where $\mu$ is a probability measure.
  \end{enumerate}
\end{defn}

\begin{rem}\label{r.semigroup}
 The semigroup $([0,\infty),\cdot)$ of real multiplication \emph{acts} on $\bbM$ in two ways: for $a \in [0,\infty)$ and $\mfx =[X,r,\mu] \in \bbM$ we define
 \begin{align} 
   a \ast [X,r,\mu] &:= [X,ar,\mu] ,\\
   a\cdot [X,r,\mu] &:= [X,r,a\mu] .
 \end{align}
So, by $\ast$ we denote a multiplication of the \emph{metric} and by $\cdot$ we denote a multiplication of the \emph{measure}.
It is clear that $\ast$ can be restricted to $\bbM_1$ (to be precise: $\ast([0,\infty),\bbM_1) \subset \bbM_1$), whereas $\cdot$ cannot be restricted.
\end{rem}

\begin{defn}[Distance matrix measure]
For an mm space $\mfx=[X,r,\mu]\in\bbM$ and $m\geq2$ we define the \emph{distance matrix map of order $m$}
\begin{equation}\label{rg5}
  R^{m,(X,r)}:X^m\to\R^{\binom{m}{2}}\,,\quad (x_i)_{i=1,\dotsc,m}\mapsto (r(x_i,x_j))_{1\leq i<j\leq m}
\end{equation}
and the \emph{distance matrix measure of order $m$}
  \begin{align} \label{rg6}
  \nu^{m,\mfx}(\dx \underline{\underline{r}}) &:=\mu^{\otimes m}\circ (R^{m,(X,r)})^{-1}(d\underline{\underline{r}})\\
  &=\mu^{\otimes m}(\{(x_1,\dotsc,x_m)\in X^m:(r(x_i,x_j))_{1\leq i<j\leq m}\in\dx \underline{\underline{r}}\})\,. \nonumber
\end{align}
For $m=1$ we set $\nu^{1,\mfx} := \bar{\mfx} := \mu(X)$ the \emph{total mass}.
\end{defn}

The finite subtrees with $m$ leaves can be described by the following test functions.

\begin{defn}[Monomials]
\label{d.poly}

For $m\geq1$ and $\phi\in C_b(\R^{\binom{m}{2}})$ (the space of bounded continuous functions $\R^{\binom{m}{2}} \rightarrow \mathbb R$), define the \emph{monomial}
\begin{equation}\label{rg8}
  \Phi=\Phi^{m,\phi}:\bbM\to\bbR\,,\quad \mfu\mapsto \langle\phi,\nu^{m,\mfx}\rangle = \int_{\R^{\binom{m}{2}}} \nu^{m,\mfx}(\dx \dr)\, \phi(\dr)\,
\end{equation}
and write $\Pi$ for the set of monomials. 

For convenience, we abbreviate the nonnegative monomials $ \Pi_+ := \{\Phi^{m,\phi} \in \Pi:\, \phi \geq 0\} $.
The algebra generated by $\Pi$ is denoted by $\mcA(\Pi)$ and called the set of \emph{polynomials}.
\end{defn}

We next recall the topology given in Definition 2.5 of \cite{ALW14a}.

\begin{defn}[Gromov-weak-topology]\label{D.gromov}

  We say that a sequence $(\mfx_n)_{n\in\bbN}$ of elements from $\bbM$ converges to $\mfx\in\bbM$ in the \emph{Gromov-weak topology} if and only if
  \begin{equation}
    \Phi(\mfx_n) \to \Phi(\mfx)
  \end{equation}
  for any $\Phi \in \Pi$, defined in \eqref{rg8}.
  The topology is denoted by $\mcO_{\text{Gweak}}$.
\end{defn}
\begin{rem}
 The topology of Gromov-weak convergence is equivalent to the convergence of the distance measures and can be metricized by the Gromov-Prohorov metric $d_{\text{GPr}}$.

 The metric space $(\bbM, d_{\text{GPr}})$ is complete and separable, see Proposition~4.8 in \cite{ALW14a}.
\end{rem}



\section{The partial order \texorpdfstring{$\legen$}{le.general} on metric measure spaces}\label{ss.legen}

We define a relation $\legen$ on the set $\bbM$ of metric measure spaces. 
It will turn out that $\legen$ is a partial order with some additional properties. 

\begin{defn}[The relation $\legen$]\label{d.legen}
 For $\mfx, \mfy \in \bbM$ we define $\mfx \legen \mfy$ if for $\mfx = [X,r_X,\mu_X]$ and $\mfy =[Y,r_Y,\mu_Y]$
 there is a Borel-measure $\mu_Y' \le \mu_Y$ on $Y$ and a map $\tau : \supp(\mu_Y') \rightarrow \supp(\mu_X)$
 such that
 \begin{align}
 \mu_X &= \mu_Y' \circ \tau^{-1} ,\\
 r_X(\tau(y_1),\tau(y_2)) &\le r_Y(y_1,y_2)  \ \text{ for all } y_1,y_2 \in \supp(\mu_Y) .
 \end{align}
 We say that $\tau$ is a \emph{measure-preserving} mapping and a \emph{sub-isometry}.
\end{defn}

Of course one needs to verify that this definition does not depend on the particular representation of $\mfx$ and $\mfy$. But this can be easily seen
by definition - any other representative is measure-preserving isometric to the first one.
Besides it is worth comparing the previous definition to Definition~\ref{d.mm}.
Before we give an example we note that the above definition 
consists of two ideas, namely: 

\begin{defn}[The relation $\lemeasure$]\label{d.lemeasure}
 Let $\mfx = [X,r_X,\mu_X]$, $\mfy = [Y,r_Y,\mu_Y]\in \bbM$.
 We say that $\mfx \lemeasure \mfy$ if there is an isometry $\tau : \supp(\mu_Y) \rightarrow X$
 such that 
 \begin{equation}
  \label{e.legen.mu}  \mu_X \le \mu_Y\circ \tau^{-1}. 
 \end{equation}
 We say that $\tau$ is a \emph{sub-measure preserving} isometry. 
\end{defn}

And

\begin{defn}[The relation $\lemetric$]\label{d.lemetric}
 Let $\mfx = [X,r_X,\mu_X], \mfy = [Y,r_Y,\mu_Y],\in \bbM_1$.
 We say that $\mfx \lemetric   \mfy$ if there is a  map $\tau : \supp(\mu_Y) \rightarrow \supp(\mu_X)$
 such that $\mu_Y\circ \tau^{-1} = \mu_X$ and
 \begin{equation}
 \label{e.legen.r} r_Y(y_1,y_2) \geq r_X(\tau(y_1),\tau(y_2)) \ \text{ for all } y_1,y_2 \in \supp(\mu_Y) .
 \end{equation}
\end{defn}

As above these definitions do not depend on the representatives and we remark: 

\begin{rem}\label{r.equi.def}
 $\mfx \legen \mfy$ iff there is an mm space $\mfy'$ such that $\mfx \lemetric \mfy'$ $\lemeasure \mfy$, where we can extend the definition of $\lemetric$
 to mm-spaces with the same mass. 
\end{rem}

Let us now apply the definition in an example.
Even though it is trivial it illustrates the two important concepts: larger in distance and larger in mass.

\begin{example}\label{ex.1}
 \begin{enumerate}
  \item\label{ex.1.a} $\mfx_1 = [X,r_X,\mu_X]= [\{a,b\},r(a,b)=1,(\delta_a+\delta_b)/2]$ and $\mfy_1 = [Y,r_Y,\mu_Y]=  [\{c,d\},r(c,d)=2,(\delta_c+\delta_d)/2]$.
  Define $\tau_1: Y \to X$ via $\tau_1(c)=a$ and $\tau_1(d)=b$. Then we have
  \begin{equation}
   r_X(\tau_1(c),\tau_1(d)) = r_X(a,b) = 1 \leq 2 =r_Y(c,d) .
  \end{equation}
  So \eqref{e.legen.r} holds, i.e.\ $\mfx_1 \lemetric \mfy_1$. By Remark~\ref{r.equi.def} this implies $\mfx_1 \legen \mfy_1$.	
  \item\label{ex.1.b} $\mfx_2 = [X,r_X,\mu_X]= [\{e\},0,\delta_e]$ and $\mfy_2 = [Y,r_Y,\mu_Y]= [\{f\},0,2\delta_f]$. Then 
  Then $\tau_2: Y \to X$ via $\tau_2(f) =e$ satisfies $r_X(\tau_2(f),\tau_2(f)) = r_X(e,e) = 0  = r_Y(f,f)$ is an isometry and 
  $\delta_e = \delta_f \circ \tau_2^{-1}  \le 2 \delta_f\circ \tau_2^{-1} $. Thus $\mfx_2 \lemeasure \mfy_2$. Again, by Remark~\ref{r.equi.def} 
  this implies $\mfx_2 \legen \mfy_2$.
 \end{enumerate}
${}$  \\[-0.3cm]
 \noindent If we use the semigroup actions $\cdot$ and $\ast$ defined in Definition~\ref{d.mm} we can also write the two examples as 
 $\mfx_1 \legen 2 \ast \mfx_1 = \mfy_1$ and $\mfx_2 \legen 2 \cdot\mfx_2=\mfy_2$.
\end{example}

We include another example.
\begin{example}\label{ex.pol.gen}
 Let $\mfx = [\{1,2,4\},r(i,j)=|i-j|,  \delta_1 + \delta_2 + \delta_4 ]$ and $\mfy = [\{1,2,3,4\},r(i,j)=|i-j|,\sum_{i=1}^4 \delta_i ]$.
 Then we can not find a map $\tilde \tau: \{1,2,3,4\} \to \{1,2,4\}$ that is a sub-measure preserving sub-isometry. But we still have $\mfx \legen \mfy$.
\end{example}

We will now present some results for $\legen$. The first point is that $\legen$ defines a \emph{partial order} on $\bbM$, 
i.e.\ a reflexive, transitive and antisymmetric relation. The second point is, that $\legen$ is \emph{closed}, 
i.e.\ for $\mfx_n,\mfy_n,\mfx,\mfy \in \bbM$ with $\mfx_n \to \mfx$ and $\mfy_n \to \mfy$ as $n\to \infty$ the following holds: $\mfx_n \legen \mfy_n$ for 
all $n \in \N$ implies that $\mfx \legen \mfy$.

\begin{thm}\label{t.legen.pots}
$\legen$ is a closed partial order on $\bbM$.
\end{thm}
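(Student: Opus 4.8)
The plan is to verify the three partial-order axioms and closedness one at a time, the last being the real work. Reflexivity is immediate: in Definition~\ref{d.legen} take $\mu_Y'=\mu_X$ and $\tau=\mathrm{id}$. For transitivity, assume $\mfx\legen\mfy$ and $\mfy\legen\mfz$, with witnesses $\mu_Y'\le\mu_Y$, a $1$-Lipschitz $\tau\colon\supp(\mu_Y')\to\supp(\mu_X)$ with $\mu_X=\mu_Y'\circ\tau^{-1}$, and $\mu_Z'\le\mu_Z$, a $1$-Lipschitz $\sigma\colon\supp(\mu_Z')\to\supp(\mu_Y)$ with $\mu_Y=\mu_Z'\circ\sigma^{-1}$. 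Let $h=\mathrm{d}\mu_Y'/\mathrm{d}\mu_Y\in[0,1]$ and set $\mu_Z'':=(h\circ\sigma)\,\mu_Z'\le\mu_Z$; a change of variables gives $\mu_Z''\circ\sigma^{-1}=\mu_Y'$, and since $\sigma$ is continuous the set $\sigma^{-1}(\supp(\mu_Y'))$ is closed with full $\mu_Z''$-mass, hence contains $\supp(\mu_Z'')$, which in turn lies in $\supp(\mu_Z')$. Therefore $\tau\circ\sigma\colon\supp(\mu_Z'')\to\supp(\mu_X)$ is a well-defined $1$-Lipschitz map with $(\tau\circ\sigma)_\ast\mu_Z''=\mu_X$ and $\mu_Z''\le\mu_Z$, i.e.\ $\mfx\legen\mfz$.

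For antisymmetry let $\mfx\legen\mfy$ and $\mfy\legen\mfx$. Comparing total masses in Definition~\ref{d.legen} forces $\bar{\mfx}=\bar{\mfy}$; but then the sub-measure $\mu_Y'\le\mu_Y$ witnessing $\mfx\legen\mfy$ already has full mass, so $\mu_Y'=\mu_Y$ and in fact $\mfx\lemetric\mfy$ (Definition~\ref{d.lemetric}, extended to equal masses as in Remark~\ref{r.equi.def}), and symmetrically $\mfy\lemetric\mfx$. Composing the two measure-preserving $1$-Lipschitz maps yields $T\colon\supp(\mu_X)\to\supp(\mu_X)$ that preserves $\mu_X$ and is $1$-Lipschitz. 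The key point is that such a $T$ must be an isometry of $\supp(\mu_X)$: the map $(x_1,x_2)\mapsto(Tx_1,Tx_2)$ preserves $\mu_X^{\otimes2}$, so integrating $(x_1,x_2)\mapsto\psi(r_X(x_1,x_2))$ for a bounded, strictly increasing, continuous $\psi$ and using $r_X(Tx_1,Tx_2)\le r_X(x_1,x_2)$ pointwise forces equality $\mu_X^{\otimes2}$-a.e., and continuity of $T$ together with $\supp(\mu_X)$ being its own closure upgrades this to all pairs. From ``$T$ is an isometry'' one reads off that each of the two maps is an isometry onto its image; isometric images of complete spaces are closed, so each is a measure-preserving isometry between $\supp(\mu_X)$ and $\supp(\mu_Y)$, i.e.\ $\mfx=\mfy$.

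Closedness is the substantial part, and the plan is to reduce it, via Remark~\ref{r.equi.def} and characterisations of the two ``halves'' of $\legen$ in terms of the distance matrix measures \eqref{rg6}, to stability of elementary orders under weak convergence. The characterisations I would establish are: $\mfx\lemeasure\mfy$ iff $\nu^{m,\mfx}\le\nu^{m,\mfy}$ for all $m\ge1$; and (for $\bar{\mfx}=\bar{\mfy}$) $\mfx\lemetric\mfy$ iff $\nu^{m,\mfx}$ lies below $\nu^{m,\mfy}$ in the coordinatewise stochastic order on $\R^{\binom m2}$ for all $m\ge1$. The ``only if'' directions are easy push-forward computations: the witness map is an isometry, resp.\ a contraction, so $R^{m,(X,r_X)}\circ\tau^{\otimes m}$ equals, resp.\ is coordinatewise $\le$, $R^{m,(Y,r_Y)}$ on $\supp(\mu_Y)^m$, and pushing $\mu_Y^{\otimes m}$ forward by the appropriate map, resp.\ by the pair $(R^{m,(X,r_X)}\circ\tau^{\otimes m},R^{m,(Y,r_Y)})$, exhibits the domination, resp.\ the coupling. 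Granting these, suppose $\mfx_n\to\mfx$ and $\mfy_n\to\mfy$ Gromov-weakly with $\mfx_n\legen\mfy_n$, and write $\mfx_n\lemetric\mfy_n'\lemeasure\mfy_n$. Then $\bar{\mfy_n'}=\bar{\mfx_n}$ is bounded and $\nu^{2,\mfy_n'}\le\nu^{2,\mfy_n}$ with $\{\nu^{2,\mfy_n}\}$ tight (it converges), so by the standard relative-compactness criterion for the Gromov-weak topology $\{\mfy_n'\}$ has a subsequential limit $\mfy'$. Since Gromov-weak convergence is precisely weak convergence of every $\nu^{m,\cdot}$, and both ``$\le$'' and ``$\preceq_{\mathrm{st}}$'' pass to weak limits (test against nonnegative, resp.\ bounded increasing, continuous functions), the characterisations give $\mfx\lemetric\mfy'\lemeasure\mfy$, i.e.\ $\mfx\legen\mfy$.

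I expect the main obstacle to be the ``if'' directions of these two characterisations: recovering an actual measure-preserving (sub-)isometry from the knowledge that all distance matrix measures are dominated. This is of the flavour of Gromov's reconstruction theorem and should require a consistency/extension argument for the exchangeable family $(\nu^{m,\cdot})_m$ together with a careful selection of couplings — presumably the purpose of the sections on $\lemeasure$, $\lemetric$ and the ``least upper bounds''. A secondary point to pin down is the precise relative-compactness criterion in the Gromov-weak topology invoked above. (An alternative, characterisation-free route to closedness would embed $\supp(\mu_{X_n})$ and $\supp(\mu_{Y_n'})$ isometrically into fixed Polish spaces, pass to a weak subsequential limit of the graph couplings $(\tau_n,\mathrm{id})_\ast\mu_{Y_n'}$, and prove that the limiting coupling is again carried by a graph; there the cost is a Lebesgue-density argument showing that a finite measure under which $r_X(u_1,u_2)\le r_Y(v_1,v_2)$ holds almost surely is concentrated on the graph of a map.)
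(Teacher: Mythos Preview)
Your reflexivity and antisymmetry are fine and match the paper's. Your transitivity argument is actually \emph{cleaner} than the paper's: you pull the sub-measure $\mu_Y'$ back through $\sigma$ via the Radon--Nikodym density, whereas the paper proves a separate ``swap'' lemma (Lemma~\ref{l.equi.def}) showing that $\mfx\lemeasure\mfx'\lemetric\mfy$ can be reordered to $\mfx\lemetric\mfy'\lemeasure\mfy$, and this lemma costs them Lubin's extension theorem and a tightness argument. Your route buys the same conclusion with one line of measure theory.

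For closedness the overall skeleton agrees with the paper's: factor $\mfx_n\lemetric\mfy_n'\lemeasure\mfy_n$, extract a subsequential limit $\mfy'$ of the intermediates, and use closedness of each half. The divergence is in how $\lemeasure$ is handled. You propose the characterisation ``$\mfx\lemeasure\mfy$ iff $\nu^{m,\mfx}\le\nu^{m,\mfy}$ for all $m$'' and then pass to weak limits; you correctly identify the ``if'' direction as the main obstacle. The paper sidesteps this completely: it never characterises $\lemeasure$ via distance matrices, but instead embeds all the $Y_n$ and $Y$ isometrically into a single Polish space $(Z,r_Z)$ (Lemma~5.8 of \cite{GPW09}), where $\lemeasure$ becomes the ordinary order $\mu\le\nu$ on $\mcM_f(Z)$; closedness is then just closedness of this order under weak convergence, and compactness of $\{\mfy_n'\}$ follows from $\mu_{Y_n'}\circ\varphi_n^{-1}\le\mu_{Y_n}\circ\varphi_n^{-1}$ together with Prohorov's theorem in $Z$ (Propositions~\ref{p.lemeasure.pots} and~\ref{p.prop.measure}\eqref{p.lemeasure.compact}). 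This also resolves your ``secondary point'': tightness of $\nu^{2,\mfy_n'}$ alone is \emph{not} sufficient for Gromov-weak precompactness, since the modulus $v_\delta$ in the criterion of \cite{GPW09} is not obviously controlled by $\mfy_n'\lemeasure\mfy_n$; the embedding argument handles mass-tightness and the modulus simultaneously. For $\lemetric$ there is no disagreement: both you and the paper use the distance-matrix characterisation (Theorem~\ref{p.lemetric.char}) to get closedness.

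In short: your proof is correct modulo the flagged gap, but the gap (the ``if'' direction for $\lemeasure$) is unnecessary --- your own alternative ``characterisation-free route'' via common embeddings is essentially what the paper does, and is the simpler path.
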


\begin{rem}
 We could also define a partial order $\le'$ on $\bbM$, where we say $\mfx \le' \mfy$ if there is a sub-measure preserving sub-isometry $\supp(\mu_Y) \to \supp(\mu_X)$. 
 It is easy to see that $\mfx \le' \mfy$ implies $\mfx \legen \mfy$ but a slight modification of Example~\ref{ex.pol.gen} shows that this partial order is not closed. 
\end{rem}

The following result will be important for applications: 

\begin{prop}\label{p.legen.compact}
 Let $A \subset \bbM$ be compact. Then the set $\bigcup_{\mfy \in A} \{\mfx \in \bbM:\, \mfx \legen \mfy\}$ is compact.
\end{prop}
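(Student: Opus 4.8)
\emph{Proof sketch.} The plan is to show that
$D:=\bigcup_{\mfy\in A}\{\mfx\in\bbM:\mfx\legen\mfy\}$ is at once closed and relatively compact in $(\bbM,\mcO_{\text{Gweak}})$; since the Gromov-weak topology is metrisable (by $d_{\text{GPr}}$), this yields compactness. Closedness is the easy half: given $\mfx_n\in D$ with $\mfx_n\to\mfx$, I would choose $\mfy_n\in A$ with $\mfx_n\legen\mfy_n$, use compactness of $A$ to pass to a subsequence along which $\mfy_n\to\mfy\in A$, and then obtain $\mfx\legen\mfy$ (hence $\mfx\in D$) directly from the fact that $\legen$ is closed, Theorem~\ref{t.legen.pots}. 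All the real content sits in the relative compactness of $D$.

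For that I would use the characterisation of relatively compact subsets of $\bbM$ in the Gromov-weak topology (cf.\ \cite{GPW09}; see also \cite{ALW14a} for the non-normalised case): a set $\Gamma\subset\bbM$ is relatively compact if and only if
\[
  \sup_{\mfz\in\Gamma}\bar{\mfz}<\infty\qquad\text{and}\qquad\lim_{K\to\infty}\sup_{\mfz\in\Gamma}\nu^{2,\mfz}\big((K,\infty)\big)=0 .
\]
Since $A$ is compact it is, in particular, relatively compact, hence satisfies both conditions; it then suffices to transport these two controls from $A$ to $D$ along the order. This is the monotonicity principle: \emph{if $\mfx\legen\mfy$, then $\bar{\mfx}\le\bar{\mfy}$ and $\nu^{2,\mfx}\big((K,\infty)\big)\le\nu^{2,\mfy}\big((K,\infty)\big)$ for all $K>0$.} I would read both off Definition~\ref{d.legen}: writing $\mfx\legen\mfy$ by means of a Borel measure $\mu_Y'\le\mu_Y$ and a $1$-Lipschitz (hence Borel) map $\tau:\supp(\mu_Y')\to\supp(\mu_X)$ with $\mu_X=\mu_Y'\circ\tau^{-1}$, one has $\bar{\mfx}=(\mu_Y'\circ\tau^{-1})(X)\le\mu_Y'(Y)\le\mu_Y(Y)=\bar{\mfy}$; and, since $r_X(\tau(y_1),\tau(y_2))\le r_Y(y_1,y_2)$ forces the inclusion $\{(y_1,y_2):r_X(\tau y_1,\tau y_2)>K\}\subset\{(y_1,y_2):r_Y(y_1,y_2)>K\}$, using $\mu_X^{\otimes 2}=(\mu_Y')^{\otimes 2}\circ(\tau\times\tau)^{-1}$ together with $(\mu_Y')^{\otimes 2}\le\mu_Y^{\otimes 2}$ gives the tail bound. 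Thus for any $\mfx\in D$, picking $\mfy\in A$ with $\mfx\legen\mfy$, we get $\bar{\mfx}\le\sup_{\mfy'\in A}\bar{\mfy'}$ and $\nu^{2,\mfx}\big((K,\infty)\big)\le\sup_{\mfy'\in A}\nu^{2,\mfy'}\big((K,\infty)\big)$, so $D$ satisfies both conditions of the criterion and is relatively compact.

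I expect the only non-routine points to be: getting the tail inequality for $\nu^{2,\cdot}$ to run in the correct direction, which is exactly where the sub-isometry/sub-measure structure of $\legen$ is used; and a clean statement and application of the Gromov-weak relative-compactness criterion. Everything else --- the monotonicity of the total mass, the transfer of both controls from the compact set $A$ to its down-set $D$, and the closedness of $D$ --- is routine, with the last point resting entirely on Theorem~\ref{t.legen.pots}.
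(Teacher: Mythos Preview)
Your closedness half is fine and rests, as it should, on Theorem~\ref{t.legen.pots}. The gap is in the relative-compactness half: the criterion you state is \emph{not} the characterisation of relative compactness in $(\bbM,\mcO_{\text{Gweak}})$. In Proposition~7.1 of \cite{GPW09} (and its finite-measure analogue) one needs, in addition to a mass bound and tightness of $\{\nu^{2,\mfz}:\mfz\in\Gamma\}$, uniform smallness of the modulus of mass distribution $v_\delta$ of \eqref{modul}, namely $\sup_{\mfz\in\Gamma}v_\delta(\mfz)\to 0$ as $\delta\to 0$. Without this third condition the first two do not suffice: for $\mfx_n=[\{1,\dots,n\},\,r(i,j)=\1(i\neq j),\,n^{-1}\sum_i\delta_i]$ every $\nu^{2,\mfx_n}$ is supported on $\{0,1\}$ and $\bar{\mfx}_n=1$, yet $v_\delta(\mfx_n)=1$ for all $n\ge 1/\delta$ and the sequence has no convergent subsequence in $\bbM$.

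Your monotonicity principle does not carry the missing condition either. The inequality $v_\delta(\mfx)\le v_\delta(\mfy')$ does hold along $\mfx\lemetric\mfy'$ --- this is exactly the computation in the paper's proof of Proposition~\ref{p.prop.metric}(\ref{p.lemetric.compact}) --- but it \emph{fails} along $\mfy'\lemeasure\mfy$: thinning the measure can make every ball light and drive $v_\delta$ up rather than down. This is why the paper does not argue via a single down-set monotonicity. It factors $\mfx_n\lemetric\mfy_n'\lemeasure\mfy_n$ (Remark~\ref{r.equi.def}) and treats the two layers by different mechanisms: for $\lemeasure$ it embeds into a common Polish space and invokes Prohorov's theorem (Proposition~\ref{p.prop.measure}(\ref{p.lemeasure.compact})) to extract a convergent subsequence of the $\mfy_n'$; only on the resulting compact set does it apply the $v_\delta$-monotonicity under $\lemetric$ (Proposition~\ref{p.prop.metric}(\ref{p.lemetric.compact})) to obtain a convergent subsequence of the $\mfx_n$ with limit in the down-set.
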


In some cases partially ordered sets have a deeper algebraic structure underlying which may come from a lattice.
In our case, however, there is no such structure, since $(\bbM,\legen)$ is neither a join-semilattice nor a meet-semilattice in general (the point is that we can not expect uniqueness of a ``greatest lower bound'' or ``least upper bound''). \par 
But we have the following properties  with respect to the semigroup actions given in Definition \ref{d.mm}. Namely we get that the partial order is compatible with the semigroup actions:

\begin{prop}\label{p.legen.easy}
 Let $\mfx, \mfy \in \bbM$.
 \begin{enumerate}
  \item\label{p.legen.easy.1} $a\cdot \mfx \legen \mfx$ for $a \in [0,1]$ and $\mfx \legen b \cdot \mfx$ for $b \in [1,\infty)$.
  \item\label{p.legen.easy.2} $a \ast \mfx \legen \mfx$ for $a \in [0,1]$ and $\mfx \legen b \ast \mfx$ for $b \in [1,\infty)$.
  \item\label{p.legen.easy.3} $\mfx \legen \mfy$ implies that $c \cdot \mfx \legen c \cdot \mfy$ and $c \ast \mfx \leq c \ast \mfy$ for any $c \in [0,\infty)$.
 \end{enumerate}
In particular, the first statement states that $0 = [\{a\},r,0] \legen \mfx$ for all $\mfx \in \bbM$.
\end{prop}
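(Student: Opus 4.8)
The plan is to verify each of the three items directly from Definition~\ref{d.legen}, in each case producing an explicit witness pair $(\mu_Y',\tau)$; in every instance $\tau$ will be (a restriction of) the identity map of the underlying space, so that the sub-isometry inequality either holds with equality or reduces to the trivial inequality $c\,r\le c'\,r$ for scalars $0\le c\le c'$. Since the action $\cdot$ only rescales the measure and the action $\ast$ only rescales the metric, no structural difficulty arises; the only point deserving attention is the degenerate case where a scaling factor equals $0$, in which the relevant support is empty, $\tau$ is the empty map, and the two conditions in Definition~\ref{d.legen} hold vacuously.

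For~\eqref{p.legen.easy.1}, writing $\mfx=[X,r_X,\mu_X]$ I would note $a\cdot\mfx=[X,r_X,a\mu_X]$ and, to show $a\cdot\mfx\legen\mfx$ for $a\in[0,1]$, take $\mu_Y':=a\mu_X$ (which satisfies $a\mu_X\le\mu_X$ because $a\le1$) together with $\tau:=\mathrm{id}_{\supp(a\mu_X)}$; then $a\mu_X=\mu_Y'\circ\tau^{-1}$ and $r_X(\tau(y_1),\tau(y_2))=r_X(y_1,y_2)$, so Definition~\ref{d.legen} is met. For $\mfx\legen b\cdot\mfx$ with $b\in[1,\infty)$ I would take $\mu_Y':=\mu_X\le b\mu_X$ and again $\tau:=\mathrm{id}_{\supp(\mu_X)}$. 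For~\eqref{p.legen.easy.2} the measure is unchanged and only the metric scales, $a\ast\mfx=[X,ar_X,\mu_X]$; choosing $\mu_Y':=\mu_X$ and $\tau:=\mathrm{id}_{\supp(\mu_X)}$ makes the measure condition an identity, while $ar_X\le r_X$ for $a\le1$ gives $a\ast\mfx\legen\mfx$ and $r_X\le br_X$ for $b\ge1$ gives $\mfx\legen b\ast\mfx$.

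For~\eqref{p.legen.easy.3}, I would start from a witness $(\mu_Y',\tau)$ for $\mfx\legen\mfy$, that is $\mu_Y'\le\mu_Y$, $\tau\colon\supp(\mu_Y')\to\supp(\mu_X)$ with $\mu_X=\mu_Y'\circ\tau^{-1}$ and $r_X(\tau(y_1),\tau(y_2))\le r_Y(y_1,y_2)$. For $c\cdot\mfx\legen c\cdot\mfy$ the witness is $(c\mu_Y',\tau)$ (here $\supp(c\mu_Y')=\supp(\mu_Y')$ when $c>0$, and both sides are the zero mm space when $c=0$): scaling the measure identity by $c$ and leaving the metric inequality untouched does the job. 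For $c\ast\mfx\legen c\ast\mfy$ I would keep the same witness $(\mu_Y',\tau)$: the measure identity $\mu_X=\mu_Y'\circ\tau^{-1}$ is unaffected and multiplying the metric inequality by $c\ge0$ yields $(cr_X)(\tau(y_1),\tau(y_2))\le(cr_Y)(y_1,y_2)$.

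Finally, the displayed consequence $0=[\{a\},r,0]\legen\mfx$ is just the $a=0$ instance of the first half of~\eqref{p.legen.easy.1}, since $0\cdot\mfx$ is the zero mm space for every $\mfx\in\bbM$. I do not anticipate any genuine obstacle: the whole proposition is an exercise in unwinding Definition~\ref{d.legen}, and the only thing to be careful about is the bookkeeping of supports in the degenerate cases $a=0$ or $c=0$.
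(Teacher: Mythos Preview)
Your proposal is correct and follows exactly the approach of the paper: use the identity map as the witness sub-isometry and an appropriate scaled measure as $\mu_Y'$. The paper's own proof only writes out the case $a\cdot\mfx\legen\mfx$ for $a\in[0,1]$ and declares the remaining cases ``similar'', whereas you spell out all cases and handle the degenerate scaling factors $a=0$, $c=0$ explicitly; this is more thorough but not a different argument.
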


\section{Further results}\label{s.further}

Here we study the two special cases of the $\legen$ order, given in Definition~\ref{d.lemeasure} and Definition~\ref{d.lemetric}, in more details. Moreover we prove a connection to the Eurandom distance and define a set of least upper bounds. 

\subsection{The partial order \texorpdfstring{$\lemeasure$}{le.measure}}\label{ss.lemeasure}

In this section, we will describe the relation $\lemeasure$ given in Definition~\ref{d.lemeasure}  in more details. We start with the following observation:

\begin{prop} \label{p.lemeasure.pots}
 The relation $\lemeasure$ of Definition~\ref{d.lemeasure} is a closed partial order on $\bbM$.
\end{prop}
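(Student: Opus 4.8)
The plan is to verify the three partial-order axioms and then closedness, reducing as much as possible to properties of the measure-comparison alone, since the isometry requirement in Definition~\ref{d.lemeasure} is rigid. \emph{Reflexivity} is immediate by taking $\tau = \mathrm{id}$ on $\supp(\mu_X)$. \emph{Transitivity}: given $\mfx \lemeasure \mfy$ via $\tau_1 : \supp(\mu_Y)\to X$ with $\mu_X \le \mu_Y\circ\tau_1^{-1}$, and $\mfy \lemeasure \mfz$ via $\tau_2 : \supp(\mu_Z)\to Y$ with $\mu_Y \le \mu_Z\circ\tau_2^{-1}$, one checks that $\tau_1\circ\tau_2$ (restricted appropriately — note $\tau_2(\supp\mu_Z)\subset\supp\mu_Y$ since $\tau_2$ is a sub-measure-preserving isometry, or at worst one passes to the support) is an isometry onto its image in $X$, and $\mu_X \le \mu_Y\circ\tau_1^{-1} \le (\mu_Z\circ\tau_2^{-1})\circ\tau_1^{-1} = \mu_Z\circ(\tau_1\circ\tau_2)^{-1}$, using monotonicity of pushforward under $\le$. \emph{Antisymmetry} is the more delicate axiom: if $\mfx\lemeasure\mfy$ and $\mfy\lemeasure\mfx$, then there are isometries $\tau:\supp(\mu_Y)\to X$ and $\sigma:\supp(\mu_X)\to Y$ with $\mu_X\le\mu_Y\circ\tau^{-1}$ and $\mu_Y\le\mu_X\circ\sigma^{-1}$. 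Pushing the first inequality forward by $\sigma$ and chaining gives $\mu_Y \le \mu_X\circ\sigma^{-1} \le \mu_Y\circ(\sigma\circ\tau)^{-1}$; since total masses satisfy $\bar{\mfx}\le\bar{\mfy}\le\bar{\mfx}$, all these inequalities are equalities of finite measures of equal mass, hence $\mu_X = \mu_Y\circ\tau^{-1}$ and $\tau$ is a measure-preserving isometry from $\supp(\mu_Y)$ onto a subset of $X$ of full $\mu_X$-measure; standard support arguments show this subset contains $\supp(\mu_X)$, so $\tau$ witnesses $\mfx = \mfy$ in $\bbM$.

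For \emph{closedness}, suppose $\mfx_n \to \mfx$, $\mfy_n\to\mfy$ in the Gromov-weak topology with $\mfx_n\lemeasure\mfy_n$ for all $n$. The strategy is to produce, from the defining data, a ``difference'' mm space: for each $n$, since $\mu_{X_n} \le \mu_{Y_n}\circ\tau_n^{-1}$, set $\rho_n := \mu_{Y_n}\circ\tau_n^{-1} - \mu_{X_n}$, a finite measure on $X_n$, and consider the mm space $\mfd_n := [X_n, r_{X_n}, \rho_n]$. Then $\mfy_n$ is, up to equivalence, the mm space obtained by taking $X_n$ with metric $r_{X_n}$ (embedded isometrically in $Y_n$ via $\tau_n^{-1}$, which exists because $\tau_n$ is an isometry onto its image — here one must be careful that $\supp\mu_{Y_n}$ embeds isometrically into $X_n$, so $\mfy_n$'s distance-matrix distribution is that of $\mfx_n$ with an added independent mass $\rho_n$ sitting in the same metric space). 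Concretely, $\nu^{m,\mfy_n} = (\mu_{X_n} + \rho_n)^{\otimes m}\circ(R^{m,(X_n,r_{X_n})})^{-1}$, i.e. $\mfy_n$ decomposes as $\mfx_n$ plus $\mfd_n$ glued along the common metric space $X_n$. Boundedness of $\bar{\mfd_n} = \bar{\mfy_n} - \bar{\mfx_n} \to \bar{\mfy} - \bar{\mfx}$ together with a tightness/relative-compactness argument for the family $\{\mfd_n\}$ (the metric structure is dominated by that of the convergent sequence $\mfy_n$, which is relatively compact, so its distance-matrix measures are tight) lets us pass to a subsequence along which $\mfd_n \to \mfd$ for some $\mfd\in\bbM$, and simultaneously the ``embedded'' data converge so that in the limit $\mfy$ is $\mfx$ glued to $\mfd$ along a common metric space, which exactly exhibits an isometry $\supp(\mu_Y)\supset\supp(\mu_X)$ with $\mu_X \le \mu_Y\circ\tau^{-1}$, i.e. $\mfx\lemeasure\mfy$.

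The main obstacle is the closedness step, specifically making rigorous the ``gluing along a common metric space'' picture and extracting the limiting isometry: the natural way is to work on a common Polish space via a Gromov–Prohorov-type coupling. I would embed all the $(Y_n, r_{Y_n})$ (and their limit) isometrically into a fixed Polish space $Z$ so that $\mu_{Y_n} \to \mu_Y$ weakly on $Z$; then $\mu_{X_n}$, pushed to $Z$ via $\tau_n^{-1}$, is a sub-probability-measure dominated by $\mu_{Y_n}$, and by tightness (dominated measures on a tight sequence) a subsequence converges weakly to some $\nu \le \mu_Y$ on $Z$. One then argues that $[Z, r_Z, \nu]$ (after restricting to $\supp\nu$) equals $\mfx$ — this uses that Gromov-weak convergence plus the embedding forces the limiting distance-matrix measure of the $\mu_{X_n}$ to be $\nu^{m,\mfx}$ — and that $\nu$, being the weak limit of measures supported on the images $\tau_n^{-1}(\supp\mu_{X_n})\subset\supp\mu_{Y_n}$, is supported inside $\supp\mu_Y$; the inclusion $\supp\nu \subseteq \supp\mu_Y$ with $\nu\le\mu_Y$ then yields the required sub-measure-preserving isometry. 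The technical care needed is exactly in the weak-convergence bookkeeping on $Z$ and in checking that no mass of $\mu_{X_n}$ escapes to a part of $Z$ outside $\supp\mu_Y$, which is where the domination $\mu_{X_n}\circ(\tau_n^{-1})^{-1} \le \mu_{Y_n}$ is essential.
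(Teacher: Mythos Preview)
Your proposal is correct, and your second (``natural'') strategy for closedness---embedding all $(Y_n,r_{Y_n})$ and $(Y,r_Y)$ isometrically into a common Polish space $(Z,r_Z)$ so that the pushed-forward $\mu_{Y_n}$ converge weakly, using the domination $\mu_{X_n}\le\mu_{Y_n}$ to get tightness and a subsequential weak limit $\nu\le\mu_Y\circ\varphi^{-1}$, and identifying $[Z,r_Z,\nu]$ with $\mfx$---is exactly what the paper does (invoking Lemma~5.8 of \cite{GPW09} for the common embedding). Your first ``difference mm-space'' $\mfd_n$ paragraph is an unnecessary detour once you adopt that embedding viewpoint.

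For the order axioms the paper is more economical than your direct verification: it simply observes once that $\mfx\lemeasure\mfy$ is equivalent to having a \emph{single} representation $\mfx=[Y,r_Y,\mu_X']$, $\mfy=[Y,r_Y,\mu_Y]$ with $\mu_X'\le\mu_Y$ on the same underlying metric space, after which reflexivity, transitivity and antisymmetry are inherited verbatim from the classical partial order on finite measures. Your antisymmetry argument reaches the same conclusion, but the chain through $\sigma\circ\tau$ is superfluous (and has a small domain issue: $\tau(\supp\mu_Y)$ need not lie in $\supp\mu_X$ a priori); the total-mass equality $\bar\mfx=\bar\mfy$ alone already forces $\mu_X=\mu_Y\circ\tau^{-1}$, which is all you need.
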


\begin{proof}
 Note that $\mfx \lemeasure \mfy \lemeasure \mfz = [Z,r_Z,\mu_Z] \in \bbM$ iff there are Borel-measures $\mu_X,\mu_Y$ on the Borel subsets of $Z$
 such that $\mfx = [Z,r_Z,\mu_X]$, $\mfy = [Z,r_Z,\mu_Y]$ and $\mu_X \le \mu_Y \le \mu_Z$ (with the classical partial order on measures). This implies that $\lemeasure$ is a partial order. \par 
 If we take $\mfx_n,\mfy_n,\mfx,\mfy \in \bbM$, $n \in \mathbb N$ with $\mfx_n \rightarrow \mfx$, $\mfy_n \rightarrow \mfy$ and 
 $\mfx_n \lemeasure \mfy_n$ for all $n \in \mathbb N$, then we need to show $\mfx \lemeasure \mfy$. Note that, as before, we find measures 
 $\mu_X^n \le \mu_Y^n$ such that  $\mfx_n = [Y^n,r_Y^n,\mu_X^n]$ and $\mfy_n = [Y^n,r_Y^n,\mu_Y^n]$, $n \in \mathbb N$. \par 
 By Lemma 5.8 in \cite{GPW09}, there is a complete separable metric space $(Z,r_Z)$ and 
 isometric embeddings $\varphi,\varphi_1,\varphi_2,\ldots$ from  $Y,Y^1,Y^2,\ldots $ into $(Z,r_Z)$ such that 
 \begin{equation}\label{e.Pr.1}
  d_{Pr}(\mu_Y^n \circ\varphi_n^{-1},\mu_Y\circ \varphi^{-1}) \rightarrow 0, 
 \end{equation}
where the Prohorov metric is defined on the set of Borel-measures on $(Z,r_Z)$. By the continuous mapping theorem we also know that 
 \begin{equation}\label{e.Pr.2}
  d_{Pr}(\mu_X^n \circ\varphi_n^{-1},\mu_X\circ \varphi^{-1}) \rightarrow 0. 
 \end{equation}
 Since $\mu_X^n \circ\varphi_n^{-1} \le \mu_Y^n \circ\varphi_n^{-1}$ for all $n \in \mathbb N$ we can combine  that with \eqref{e.Pr.1} and \eqref{e.Pr.2} to 
 $\mu_X\circ \varphi^{-1} \le \mu_Y\circ \varphi^{-1}$, and hence $\mfx = [Z,r_Z,\mu_X\circ \varphi^{-1}] \lemeasure [Z,r_Z,\mu_Y\circ \varphi^{-1}] = \mfy$.
\end{proof}

Let us relate the partially ordered set to a semigroup.

\begin{rem}\label{r.concat}
 The semigroup of concatenation is defined in \cite{infdiv}.
 Fix $h>0$ and define $\bbU(h)^\sqcup := \{\mfu \in \bbU: \, \nu^{2,\mfu}((h,\infty)) = 0\}$ as the space of $h$-forests.
 Those are the ultrametric measure spaces with distance at most $h$; they correspond to trees with height at most $h/2$, see the above reference for details.
 This space can be made a semigroup via the binary operation $\sqcup: \bbU(h)^\sqcup \times \bbU(h)^\sqcup \to \bbU(h)^\sqcup$, called $h$-concatenation:
 \begin{equation}\label{eq.con.1}
  [U_1,r_1,\mu_1] \sqcup [U_2,r_2,\mu_2] = [U_1 \uplus U_2, r_1 \sqcup r_2, \mu_1+\mu_2] \, ,
 \end{equation}
with $\uplus$ is the disjoint union and
 \begin{equation}
	\begin{split}
   r_1 \sqcup r_2 (x,y) = r_1(x,y) &\1(x,y \in U_1) + r_2(x,y) \1(x,y\in U_2) \\
	&+ h \1(x\in U_1,y \in U_2) + h \1(x\in U_2, y \in U_1) 
	\end{split}
 \end{equation}
 for $[U_1,r_1,\mu_1],[U_2,r_2,\mu_2] \in \U(h)^\sqcup$.
 As this turns out to be a cancellative operation, the induced relation $\leq_\sqcup$ 
 ($\mfu \leq_\sqcup \mfv :\Leftrightarrow \exists \mfw: \mfu \sqcup \mfw = \mfv$) defines a partial order. \par 
If now $\mfu= [U,r_U,\mu_U] \leq_\sqcup \mfv= [V,r_V,\mu_V] $ then $V$ is of the form (\ref{eq.con.1}), i.e.\ there is a $\mfw = [W,r_W,\mu_W]$ such that 
$[V,r_V,\mu_V] = [U \uplus W, r_U \sqcup r_W, \mu_U+\mu_W]$. Since $\mu_U + \mu_W \ge \mu_U$ (as measures on $U \uplus W$) and $\mfu = [U \uplus W, r_U \sqcup r_W, \mu_U]$ we get $\mfu \lemeasure\mfv$.
\end{rem}

We close this section with the following properties of $\lemeasure$:

\begin{prop}\label{p.prop.measure}
Let $\mfx,\mfy \in \bbM$.
\begin{enumerate}
\item\label{p.prop.measure.a} If $\mfx \lemeasure \mfy$ and $\overline{\mfx} = \overline{\mfy}$, then $\mfx = \mfy$.  
\item\label{p.lemeasure.compact}  Let $A \subset \bbM$ be compact.  Then the set $\bigcup_{\mfy \in A} \{\mfx \in \bbM:\, \mfx \lemeasure \mfy\}$ is compact. 
\end{enumerate}
\end{prop}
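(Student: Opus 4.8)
Part (a) is the short one. By the characterization at the start of the proof of Proposition~\ref{p.lemeasure.pots}, $\mfx \lemeasure \mfy$ means we may realize both on a common metric space, writing $\mfx = [Z,r_Z,\mu_X]$ and $\mfy = [Z,r_Z,\mu_Y]$ with $\mu_X \le \mu_Y$ as Borel measures on $Z$. Then $\overline{\mfx} = \mu_X(Z)$ and $\overline{\mfy} = \mu_Y(Z)$, so the hypothesis $\overline{\mfx} = \overline{\mfy}$ gives $\mu_X(Z) = \mu_Y(Z)$. A nonnegative measure $\eta := \mu_Y - \mu_X$ with $\eta(Z) = 0$ is the zero measure, hence $\mu_X = \mu_Y$ and $\mfx = \mfy$. (One should note that $\mu_X \le \mu_Y$ and equal total mass forces $\supp(\mu_X) = \supp(\mu_Y)$ too, so there is no subtlety about the supports.)

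For part (b), the plan is to combine Part (a) with the compactness of $A$ and the closedness of $\lemeasure$ from Proposition~\ref{p.lemeasure.pots}. Set $B := \bigcup_{\mfy \in A}\{\mfx : \mfx \lemeasure \mfy\}$. Since $(\bbM, d_{\mathrm{GPr}})$ is separable and metrizable it suffices to show $B$ is sequentially compact. Take $(\mfx_n)$ in $B$ and pick $\mfy_n \in A$ with $\mfx_n \lemeasure \mfy_n$. By compactness of $A$, pass to a subsequence (not relabelled) so that $\mfy_n \to \mfy \in A$. The key point is to show the $\mfx_n$ have a convergent subsequence: this should follow from a relative-compactness criterion for the Gromov-weak topology (the one behind Proposition~4.8 of \cite{ALW14a} / the characterization via tightness of distance matrix measures), using that $\mfx_n \lemeasure \mfy_n$ forces $\nu^{m,\mfx_n}$ to be dominated — after realizing $\mfx_n,\mfy_n$ on a common space — by $\mu_{Y_n}^{\otimes m}$-type quantities controlled by the convergent sequence $\mfy_n$; in particular $\overline{\mfx_n} \le \overline{\mfy_n}$ stays bounded, and the modulus-of-mass-distribution / distance-matrix tightness of $\{\mfy_n\}$ transfers to $\{\mfx_n\}$ since contracting to a sub-measure only makes these quantities smaller. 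Hence along a further subsequence $\mfx_n \to \mfx \in \bbM$. Now closedness of $\lemeasure$ (Proposition~\ref{p.lemeasure.pots}) applied to $\mfx_n \lemeasure \mfy_n$, $\mfx_n \to \mfx$, $\mfy_n \to \mfy$ gives $\mfx \lemeasure \mfy$ with $\mfy \in A$, so $\mfx \in B$. Thus every sequence in $B$ has a subsequence converging in $B$, i.e.\ $B$ is compact.

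The main obstacle is the relative-compactness step: producing a convergent subsequence of $(\mfx_n)$ from that of $(\mfy_n)$. The clean way is to embed, for each $n$, the spaces underlying $\mfx_n$ and $\mfy_n$ isometrically into a common $(Z_n, r_{Z_n})$ with $\mu_{X_n} \le \mu_{Y_n}$ there (as in the proof of Proposition~\ref{p.lemeasure.pots}), observe that any tightness functional used to characterize Gromov-weak relative compactness is monotone under replacing a measure by a smaller one (total mass, and the "modulus of mass distribution" $v_\delta$ of \cite{GPW09}/\cite{ALW14a}, both only decrease), conclude $\{\mfx_n\}$ is relatively compact because $\{\mfy_n\}$ is (being convergent), and extract the subsequence. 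I would cite the precise relative-compactness characterization from \cite{ALW14a} rather than reprove it. Once that is in place, everything else is the soft argument above.
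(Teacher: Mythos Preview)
Your argument for part~(a) is correct and matches the paper's.

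For part~(b) your overall architecture is right and coincides with the paper's: pick $\mfy_n\in A$ with $\mfx_n\lemeasure\mfy_n$, pass to a subsequence with $\mfy_n\to\mfy$, show $(\mfx_n)$ has a convergent subsequence, then invoke closedness of $\lemeasure$. The gap is in the justification of the relative-compactness step. Your claimed monotonicity of the modulus $v_\delta$ under passing to a sub-measure is false. Take $Y=\{0,1\}$ with $r(0,1)=1$, $\mu_Y=\delta_0+\delta_1$ and $\mu_X=\delta_0+\alpha\delta_1$ for some $\alpha\in(0,1)$. Then for every $\delta<1$ one has $v_\delta(\mfy)=0$, whereas for $\delta\in[\alpha,1)$ one computes $v_\delta(\mfx)=\alpha>0$. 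So ``$v_\delta$ only decreases'' is not available, and embedding each pair into its own $(Z_n,r_{Z_n})$ does not by itself let you transfer the abstract Gromov-weak compactness criterion from $\{\mfy_n\}$ to $\{\mfx_n\}$.

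The paper circumvents this entirely: instead of invoking the $v_\delta$ criterion, it uses Lemma~5.8 of \cite{GPW09} to embed \emph{all} the $\mfy_n$ (and $\mfy$) isometrically into a \emph{single} Polish space $(Z,r_Z)$ so that the pushed-forward measures $\mu_{Y_n}\circ\varphi_n^{-1}$ converge weakly on $Z$. Since $\mu_{X_n}\circ\varphi_n^{-1}\le\mu_{Y_n}\circ\varphi_n^{-1}$ on $Z$, ordinary Prohorov on $\mcM_f(Z)$ gives a weakly convergent subsequence $\mu_{X_n}\circ\varphi_n^{-1}\Rightarrow\mu_X$, and the limit automatically satisfies $\mu_X\le\mu_Y\circ\varphi^{-1}$. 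Another appeal to Lemma~5.8 turns this back into Gromov-weak convergence $\mfx_n\to[Z,r_Z,\mu_X]=:\mfx$ with $\mfx\lemeasure\mfy$. This is both shorter and avoids any monotonicity claim for $v_\delta$.
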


\begin{proof}
 (\ref{p.prop.measure.a}) Note that if two measures $\mu,\nu$ on a set $Y$ satisfy $\mu \le \nu$ and $ \mu(Y) = \nu(Y)$, this is enough 
 to get $\mu = \nu$. \par 
 (\ref{p.lemeasure.compact}) Take a sequence $(\mfx_n)_{n \in \mathbb N}$ in $\bigcup_{\mfy \in A} \{\mfx \in \bbM:\, \mfx \lemeasure \mfy\}$. Then there is a sequence 
 $(\mfy_n)_{n \in \mathbb N}$ in $A$ such that $\mfx_n \lemeasure \mfy_n = [Y^n,r_Y^n,\mu_Y^n]$ for all $n \in \mathbb N$. Since $A$ is compact we get $\mfy_n \rightarrow \mfy \in A$ along 
 some subsequence, where we suppress the dependence. Following the proof of Proposition~\ref{p.lemeasure.pots}, we find a complete separable metric space $(Z,r_Z)$ 
 and isometric embeddings $\varphi,\varphi_1,\varphi_2,\ldots$ from  $Y,Y^1,Y^2,\ldots $ into $(Z,r_Z)$ such that $\mu_Y^n \circ \varphi_n^{-1} \Rightarrow \mu_Y \circ 
 \varphi^{-1}$. Since $\mu_X^n \circ \varphi_n^{-1} \le \mu_Y^n \circ \varphi_n^{-1} $ for all $n \in \mathbb N$, Prohorov's theorem implies 
 $\mu_X^n \circ \varphi_n^{-1} \Rightarrow \mu_X$ along some subsequence, where we again suppress the dependence. With the same argument as after \eqref{e.Pr.2} we get   $\mu_X \le \mu_Y \circ \varphi^{-1}$ and by Lemma~5.8 in \cite{GPW09}, this
 is enough to prove $\mfx_n \rightarrow [Z,r_Z,\mu_X] =:\mfx$. Since $\mfx \lemeasure [Z,r_Z,\mu_Y \circ \varphi^{-1}] = [Y,r_Y,\mu_Y]$, the result follows. 
\end{proof}

\subsection{The partial order \texorpdfstring{$\lemetric$}{le.metric}}\label{ss.lemetric}

In this section, we  describe the relation $\lemetric$ given in  Definition~\ref{d.lemetric} in more details. 
It will turn out that it is a closed partial order. Before we start we note that although $\lemetric$ is a relation on $\bbM_1$, 
it can be extended without any problems to compare mm-spaces with the same mass. \\ 

Our first result on the relation $\lemetric$ is a characterization in terms of monomials introduced in Definition~\ref{d.poly}.
Let $m \in \{2,3,\dotsc\} $ and define a partial order on $\R^{\binom{m}{2}}$: for the two elements $\dr , \dr' \in \R^{\binom{m}{2}}$ 
set $\dr \leq \dr'$ iff $\dr_{ij} \leq \dr_{ij}'$ for $1\leq i < j \leq m$.
Then we call a function $\phi \in C(\R^{\binom{m}{2}})$ \emph{increasing} if $\phi(\dr) \leq \phi(\dr')$ for all $\dr,\dr' \in\R^{\binom{m}{2}}$ 
with $\dr \leq \dr'$.
A set $A \subset \R^{\binom{m}{2}}$ is called \emph{increasing} if its indicator function $\1_A$ is increasing, i.e.~if $\dr \in A$ then $\dr' \in A$ 
for all $\dr' \in \R^{\binom{m}{2}}$ with $\dr \leq \dr'$.
A monomial $\Phi^{m,\phi} \in \Pi$ is called \emph{increasing} if $\phi$ is increasing.

\begin{thm}\label{p.lemetric.char} 
 Let $\mfx= [X,r_X,\mu_X], \mfy = [Y,r_Y,\mu_Y] \in \bbM_1$.
 The following are equivalent:
 \begin{enumerate}
  \item\label{t.lemetric.char.1} $\mfx \lemetric \mfy$.
  \item\label{t.lemetric.char.3} $\Phi(\mfx) \leq \Phi(\mfy)$ for all increasing $\Phi$.
  \item\label{t.lemetric.char.4} $\nu^{m,\mfx} (A) \leq \nu^{m,\mfy}(A)$ for all increasing $A \in  \mathcal B(\R^{\binom{m}{2}})$, $m\in  \N_{\ge 2}$.
  \item\label{t.lemetric.char.5} $\nu^{\infty,\mfx} (A) \leq \nu^{\infty,\mfy}(A)$ for all increasing 
  $A \in  \mathcal B(\R^{\binom{\mathbb N}{2}})$, where $\nu^{\infty,\mfx}$ is defined as in (\ref{rg6}) with $m$ replaced by $\infty$.
 \end{enumerate}
\end{thm}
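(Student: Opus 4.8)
The plan is to establish the cycle (a) $\Rightarrow$ (b) $\Rightarrow$ (c) $\Rightarrow$ (d) $\Rightarrow$ (a), labelling the four assertions (a)--(d) in the order listed; that (d) $\Rightarrow$ (c) holds directly is clear, since the coordinate projection $\pi_m\colon\R^{\binom{\N}{2}}\to\R^{\binom{m}{2}}$ sends increasing sets to increasing sets and pushes $\nu^{\infty,\mfx}$ forward to $\nu^{m,\mfx}$. For (a) $\Rightarrow$ (b): a measure-preserving sub-isometry $\tau\colon\supp(\mu_Y)\to\supp(\mu_X)$ is $1$-Lipschitz, hence continuous, and satisfies $\mu_X^{\otimes m}=\mu_Y^{\otimes m}\circ(\tau^{\otimes m})^{-1}$; changing variables in $\Phi^{m,\phi}(\mfx)$ and using that $R^{m,(X,r_X)}(\tau(y_1),\dots,\tau(y_m))\le R^{m,(Y,r_Y)}(y_1,\dots,y_m)$ coordinatewise together with monotonicity of $\phi$ gives $\Phi^{m,\phi}(\mfx)\le\Phi^{m,\phi}(\mfy)$.

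For (b) $\Rightarrow$ (c), fix $m$ and write $\mu:=\nu^{m,\mfx}$ and $\nu:=\nu^{m,\mfy}$, both probability measures on $\R^{\binom{m}{2}}$. If $F$ is closed and increasing, then $\1_F$ is a decreasing pointwise limit of bounded continuous increasing functions (for instance $(1-n\,\mathrm{dist}(\cdot,F))^+$ computed with a monotone norm such as $\ell^\infty$, for which the distance to an up-set is a decreasing function), so (b) and dominated convergence give $\mu(F)\le\nu(F)$. For a general increasing $A\in\mathcal B(\R^{\binom{m}{2}})$ and $\eps>0$, inner regularity yields a compact $K\subset A$ with $\mu(A\setminus K)<\eps$; then $F:=K+[0,\infty)^{\binom{m}{2}}$ is closed (compact plus closed), increasing, and satisfies $K\subset F\subset A$, whence $\mu(A)<\mu(F)+\eps\le\nu(F)+\eps\le\nu(A)+\eps$; letting $\eps\downarrow0$ gives (c).

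For (c) $\Rightarrow$ (d), I first upgrade the finite-dimensional domination to a monotone coupling at level $\infty$. For each $m$, Strassen's coupling theorem applied to (c) produces a coupling of $\nu^{m,\mfx},\nu^{m,\mfy}$ supported on $\{\dr\le\dr'\}$; disintegrating $\nu^{\infty,\mfx}$ and $\nu^{\infty,\mfy}$ along $\pi_m$ and gluing over this coupling, I obtain a coupling $Q_m$ of $\nu^{\infty,\mfx}$ and $\nu^{\infty,\mfy}$ on $\R^{\binom{\N}{2}}\times\R^{\binom{\N}{2}}$ with $Q_m(\pi_m\dr\le\pi_m\dr')=1$. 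The couplings with these two fixed marginals form a weakly compact set, inside which $\{Q:\,Q(\pi_m\dr\le\pi_m\dr')=1\}$ is closed (portmanteau, as $\{\pi_m\dr\le\pi_m\dr'\}$ is closed) and decreasing in $m$; a point $Q$ in the intersection satisfies $Q(\dr\le\dr')=1$, and pushing $Q$ forward through either coordinate shows $\nu^{\infty,\mfx}(A)\le\nu^{\infty,\mfy}(A)$ for every increasing Borel $A$, which is (d). For (d) $\Rightarrow$ (a), the coordinatewise order on the Polish space $\R^{\binom{\N}{2}}$ is closed, so Strassen's theorem turns (d) into a coupling $Q$ of $\nu^{\infty,\mfx},\nu^{\infty,\mfy}$ with $Q(\dr\le\dr')=1$; disintegrating $\mu_X^{\otimes\N}$ along $R^{\infty,(X,r_X)}$ and $\mu_Y^{\otimes\N}$ along $R^{\infty,(Y,r_Y)}$ and gluing over $Q$ produces a coupling $\widetilde Q$ of $\mu_X^{\otimes\N}$ and $\mu_Y^{\otimes\N}$ on $X^\N\times Y^\N$ under which, $\widetilde Q$-almost surely, $r_X(x_i,x_j)\le r_Y(y_i,y_j)$ for all $i,j$, while $(x_i)$ is dense in $\supp(\mu_X)$ with $\frac1n\sum_{i\le n}\delta_{x_i}\Rightarrow\mu_X$ (Varadarajan) and likewise for $(y_i)$. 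Fixing one such pair $(x_i),(y_i)$, the prescription $\tau_0(y_i):=x_i$ is well defined because $y_i=y_j$ forces $r_X(x_i,x_j)\le r_Y(y_i,y_j)=0$, hence $x_i=x_j$, and it is $1$-Lipschitz on the dense set $\{y_i\}\subset\supp(\mu_Y)$, so it extends uniquely to a $1$-Lipschitz map $\tau\colon\supp(\mu_Y)\to\overline{\{x_i\}}=\supp(\mu_X)$; the continuous mapping theorem gives $\mu_Y\circ\tau^{-1}=\lim_n\frac1n\sum_{i\le n}\delta_{\tau(y_i)}=\lim_n\frac1n\sum_{i\le n}\delta_{x_i}=\mu_X$, and by density the sub-isometry inequality holds on all of $\supp(\mu_Y)$, so $\mfx\lemetric\mfy$.

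The main obstacle is (d) $\Rightarrow$ (a): a Strassen coupling is merely a joint law, whereas $\lemetric$ demands an honest measure-preserving \emph{map}. The bridge is to read $\tau$ off pointwise from one realization of the monotone coupling along a pair of $\mu_X$- resp.\ $\mu_Y$-typical sampling sequences; the observation that vanishing $r_Y$-distance forces vanishing $r_X$-distance is exactly what makes $\tau$ single-valued, and weak convergence of the empirical measures is exactly what makes $\tau$ measure preserving. The remaining ingredients are routine but not entirely trivial: Strassen's theorem, the disintegration/gluing lemma used to lift couplings through the distance-matrix maps (applied twice), the weak-compactness argument in (c) $\Rightarrow$ (d), and the approximation of increasing Borel sets by closed increasing sets and of the latter by increasing continuous functions in (b) $\Rightarrow$ (c).
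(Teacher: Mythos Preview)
Your argument is correct, but the route through (c) $\Rightarrow$ (d) $\Rightarrow$ (a) differs from the paper's and relies on heavier external tools. For (c) $\Rightarrow$ (d) the paper simply observes that an increasing Borel $A\subset\R^{\binom{\N}{2}}$ can be approximated from above by the cylinders $\pi_m^{-1}(\pi_m(A))$ (whose projections $\pi_m(A)$ are again increasing), reducing immediately to (c); you instead build monotone couplings at each finite level via Strassen, lift them by gluing, and extract a limit coupling by compactness. For (d) $\Rightarrow$ (a) the paper avoids Strassen altogether: it defines $E_X\subset X^\N$ (sequences with empirical measure $\mu_X$) and the single increasing set $B^c=\{\dr:\ \text{no }(y_i)\in E_Y\text{ has }r_Y(y_i,y_j)\ge r_{ij}\text{ for all }i,j\}$, notes $\nu^{\infty,\mfy}(B^c)=0$, and applies (d) just to this one set to force $\nu^{\infty,\mfx}(A\cap B)=1$; any point of $A\cap B$ then hands over a pair of compatible dense sequences directly. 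Your approach is more modular---Strassen and the disintegration/gluing lemma do the heavy lifting and the argument would transfer verbatim to other closed partial orders---while the paper's is more elementary and self-contained, the point being that the full strength of (d) is never needed: one cleverly chosen increasing set already suffices to produce the measure-preserving sub-isometry.
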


\begin{rem} 
\begin{enumerate}
 \item The Theorem may be seen as an extension of Lemma~4.2 (b) of \cite{EM14}. Their work also defines a partial order and we will see later in Section~\ref{ss.EM14} that their partial order is a special case of our order.
 \item We think that this theorem is also true for the general order $\legen$, where one has to use positive increasing functions. 
But this is still open.
\end{enumerate}
\end{rem}

\begin{proof}
 ``(\ref{t.lemetric.char.1}) $\Rightarrow$ (\ref{t.lemetric.char.3})'' is straight forward and 
 ``(\ref{t.lemetric.char.3}) $\Rightarrow$ (\ref{t.lemetric.char.4})'' follows by a standard approximation argument. \par 
 For ``(\ref{t.lemetric.char.4}) $\Rightarrow$ (\ref{t.lemetric.char.5})''  we note that 
 $\mathbb R^{\binom{\mathbb N}{2}} \supset A  = \bigcap_m \pi_m^{-1}(\pi_m(A))$, where 
 $\pi_m:\mathbb R^{\binom{\mathbb N}{2}} \to \mathbb R^{\binom{m}{2}} $ is the projection, and that 
 $\nu^{\infty,\mfx}(\pi_m^{-1}(\pi_m(A))) = \nu^{m,\mfx}(\pi_m(A))$. \par 
The proof of ``(\ref{t.lemetric.char.5}) $\Rightarrow$ (\ref{t.lemetric.char.1})'' is based on the proof of the mm-reconstruction Theorem (see for example \cite{Kondo} 
and \cite{Vershik}).
  We can assume w.l.o.g.~that $X = \supp(\mu_X)$ and $Y = \supp(\mu_Y)$. Let $E_X \subset X^\N$ be the set of all sequences $(x_i)_{i \in \N}$ with
  \begin{equation}
  \lim_{n \rightarrow \infty} \frac{1}{n} \sum_{i = 1}^n f(x_i) = \int_X f(x) \mu_X(dx),\qquad \forall f \in C_b(X).
  \end{equation}
  Note that $\mu_X^{\otimes N}(E_X) = 1$ (by the Glivenko-Cantelli theorem, e.g.~in \cite{parthasarathy}) and that $\{x_i: \ i \in \N\}$ is dense in $X$ 
  for all $x \in E_X$ (we assumed $X = \supp(\mu_X)$). We denote by $E_Y$ the analogue set of sequences in $Y$, where we replace $\mu_X$ by $\mu_Y$.
  Define 
  \begin{align}
  A&:= \left\{\dr \in \R_+^{\binom{\mathbb N}{2}}:\ \exists (x_i)_{i \in \N} \in E_X : r_X(x_i,x_j) \leq r_{i,j} ,\ \forall 1 \le i \le j \right\},\\
  B&:= \left\{\dr \in \R_+^{\binom{\mathbb N}{2}}:\ \exists  (y_i)_{i \in \N}\in E_Y:  \ r_Y(y_i,y_j) \geq r_{i,j},\  \forall 1 \le i \le j \right\}.
  \end{align}
  Clearly 
\begin{equation}
 \nu^{\infty,\mfx}(A) =  \nu^{\infty,\mfy}(B) = 1.
\end{equation}
Observe that $\R_+^{\binom{\mathbb N}{2}} \backslash B$ is an increasing set and we have
\begin{equation}
 \nu^{\infty,\mfx}\left(\R_+^{\binom{\mathbb N}{2}} \backslash B\right) \le \nu^{\infty,\mfy}\left(\R_+^{\binom{\mathbb N}{2}} \backslash B\right) = 0.
\end{equation}
It follows that $\nu^{\infty,\mfx}(A \cap B) = 1$ and hence $A \cap B$ is not empty. Now, by definition, we find a sequence $(x_i)_{i\in \N}\in E_X$ and $(y_i)_{i \in \N} \in E_Y$ with the property that  $r_X(x_i,x_j) \leq \dr_{ij} \leq r_Y(y_i,y_j)$ for all $i,j \in \N$. 
Fix these two sequences.
Define the map $\tilde\tau: \{y_i: i \in \N\} \rightarrow X$, $y_i \mapsto x_i$, then $\tilde\tau$ is a sub-isometry defined on a dense subset of $Y$ and therefore extends to a sub-isometry $\tau: Y \rightarrow X$.
Finally observe that by definition of the sequences $(x_i)_{i \in \N}$ and $(y_i)_{i \in \N}$:
  \begin{equation}
  \begin{split}
 \int f \ d\mu_Y\circ \tau^{-1} &= \int f\circ \tau d \mu_Y =  \lim_{n \rightarrow \infty} \frac{1}{n} \sum_{i = 1}^n f(\tau(y_i)) \\
  &=  \lim_{n \rightarrow \infty} \frac{1}{n} \sum_{i = 1}^n f(x_i) =   \int f  \ d \mu_X.
  \end{split}
  \end{equation}
  for all functions $f \in C_b(X)$, i.e.\ $\mu_Y\circ \tau^{-1} = \mu_X$ and therefore $\tau$ is a measure-preserving sub-isometry as required.
\end{proof}

As a direct consequence of Theorem~\ref{p.lemetric.char}, we can deduce the following known statement (see  3.$\frac{1}{2}$.15 (a) and (b) in \cite{gromov}).

\begin{prop}\label{p.lemetric.pots}
The relation $\lemetric$ of Definition~\ref{d.lemetric} is a closed partial order on $\bbM_1$.
\end{prop}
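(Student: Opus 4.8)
The plan is to derive Proposition~\ref{p.lemetric.pots} almost entirely from the characterization in Theorem~\ref{p.lemetric.char}, reducing every property of $\lemetric$ to a statement about the family of inequalities $\Phi(\mfx)\le\Phi(\mfy)$ for increasing monomials $\Phi$. This is the cleanest route because reflexivity, transitivity and closedness are all trivially inherited by a relation defined as ``$\Phi(\mfx)\le\Phi(\mfy)$ for all $\Phi$ in a fixed class of continuous functions'', while antisymmetry is exactly the mm-reconstruction theorem applied twice.

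First I would record reflexivity: $\Phi(\mfx)\le\Phi(\mfx)$ is obvious, so $\mfx\lemetric\mfx$ by Theorem~\ref{p.lemetric.char} (or directly from Definition~\ref{d.lemetric} with $\tau=\mathrm{id}$). For transitivity, suppose $\mfx\lemetric\mfy$ and $\mfy\lemetric\mfz$; then for every increasing monomial $\Phi$ we have $\Phi(\mfx)\le\Phi(\mfy)\le\Phi(\mfz)$, hence $\mfx\lemetric\mfz$ by the equivalence (\ref{t.lemetric.char.1})$\Leftrightarrow$(\ref{t.lemetric.char.3}). Alternatively one can compose the two measure-preserving sub-isometries $\tau_1:\supp(\mu_Y)\to\supp(\mu_X)$ and $\tau_2:\supp(\mu_Z)\to\supp(\mu_Y)$ directly; $\tau_1\circ\tau_2$ is again measure-preserving and $1$-Lipschitz, but care must be taken that $\tau_2$ maps into $\supp(\mu_Y)$, which it does by definition. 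Either argument is routine.

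For antisymmetry, assume $\mfx\lemetric\mfy$ and $\mfy\lemetric\mfx$. Then for every increasing monomial $\Phi^{m,\phi}$ we get both $\Phi(\mfx)\le\Phi(\mfy)$ and $\Phi(\mfy)\le\Phi(\mfx)$, so $\nu^{m,\mfx}(A)=\nu^{m,\mfy}(A)$ for every increasing Borel set $A\subset\R^{\binom{m}{2}}$, by equivalence (\ref{t.lemetric.char.4}). Since the increasing sets are closed under finite intersections (they form a $\pi$-system) and generate the Borel $\sigma$-field on $\R^{\binom{m}{2}}$ together with their complements --- indeed any box $\prod(a_{ij},\infty)$ is increasing and these generate the Borel sets --- a Dynkin/monotone-class argument gives $\nu^{m,\mfx}=\nu^{m,\mfy}$ for all $m\ge 2$; combined with $\overline{\mfx}=\overline{\mfy}=1$ this forces $\mfx=\mfy$ by the Gromov reconstruction theorem (equivalently, by the fact that the distance matrix distributions determine the element of $\bbM_1$, cf.\ \cite{GPW09}). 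For closedness, take $\mfx_n\to\mfx$, $\mfy_n\to\mfy$ with $\mfx_n\lemetric\mfy_n$ for all $n$. Fix an increasing monomial $\Phi=\Phi^{m,\phi}$ with $\phi\in C_b$ increasing; by definition of the Gromov-weak topology $\Phi$ is continuous on $\bbM$, so $\Phi(\mfx)=\lim_n\Phi(\mfx_n)\le\lim_n\Phi(\mfy_n)=\Phi(\mfy)$. Since this holds for all increasing $\Phi$, Theorem~\ref{p.lemetric.char} yields $\mfx\lemetric\mfy$.

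The only genuine subtlety --- the ``hard part'' --- is already absorbed into Theorem~\ref{p.lemetric.char}, whose implication (\ref{t.lemetric.char.5})$\Rightarrow$(\ref{t.lemetric.char.1}) is the reconstruction-type argument; given that theorem, nothing here is difficult. The one point that deserves a line of justification in the write-up is that in the closedness argument one really may restrict attention to increasing monomials with \emph{bounded continuous} $\phi$ (which are the ones that are Gromov-weakly continuous), and that testing against this subclass still suffices --- but this is exactly condition (\ref{t.lemetric.char.3}) of Theorem~\ref{p.lemetric.char}, so no extra work is needed. I would therefore present the proof as three short paragraphs (partial order via reflexivity/transitivity/antisymmetry, then closedness), each citing Theorem~\ref{p.lemetric.char}, and flag that antisymmetry is where the reconstruction theorem enters.
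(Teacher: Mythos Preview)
Your proof is correct and follows essentially the same route as the paper: both derive all four properties directly from Theorem~\ref{p.lemetric.char}, with reflexivity and transitivity trivial, closedness from continuity of monomials in the Gromov-weak topology, and antisymmetry from the reconstruction theorem. The only cosmetic difference is in the antisymmetry step --- you go through condition~(\ref{t.lemetric.char.4}) and a $\pi$--$\lambda$ argument on increasing sets to conclude $\nu^{m,\mfx}=\nu^{m,\mfy}$, whereas the paper goes through condition~(\ref{t.lemetric.char.3}) and argues that the algebra generated by the increasing monomials is dense in all polynomials --- but both are standard and lead to the same invocation of Gromov reconstruction.
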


\begin{proof}
This proof follows directly from Proposition~\ref{p.lemetric.char}: While the reflexivity and transitivity are obvious, the antisymmetry follows 
by the fact that $\Phi(\mfx) = \Phi(\mfy)$ for all increasing $\Phi \in \Pi_+$ implies $\mfx = \mfy$. This follows since the algebra generated by
increasing $\Phi$ is dense in the set of all polynomials and this suffices to deduce  $\mfx = \mfy$ (see Proposition~2.6 in \cite{GPW09}). \par 
The closedness follows since the monomials generate the Gromov-weak topology.
\end{proof}

One may think that for ``small'' spaces (with few points) one only needs to look at low order polynomials.
The next example shows that this is not the case. Nevertheless we think that the characterization result, Theorem \ref{p.lemetric.char},
might be helpful algorithmically to determine whether $\mfx \lemetric \mfy$ holds.

\begin{example}
We consider  $\mfx = (\{a,b\},r(a,b)=1,(\delta_a + \delta_b)/2)$ and $\mfy = (\{c,d,e\},r(c,d)=1,r(c,e)=r(d,e)=2,(\delta_c+\delta_d + \delta_e)/3)$. Then, on the one hand, one can not find a measure preserving sub-isometry but on the other hand it is not obvious that the distance matrix distributions do not dominate each other. In particular one needs to consider the distance matrix distribution of order $m = 10$ to see that $\nu^{m,\mfx}\not \leq \nu^{m,\mfy}$: If we look at the sequence of points
\begin{equation}
\underline{x} := \left(\underbrace{a,\ldots,a }_{m},\underbrace{b,\ldots,b }_{m}\right)
\end{equation}
and denote by $R:= R^{m,\mfx}(\underline{x})$ the corresponding distance matrix, then 
\begin{equation}
\nu^{m,\mfx}\left(\bigtimes_{1 \le i < j \le m}[R_{i,j},\infty) \right) = \frac{2}{2^{2m}}. 
\end{equation}
On the other hand: 
\begin{equation}
\nu^{m,\mfy}\left(\bigtimes_{1 \le i < j \le m}[R_{i,j},\infty) \right) = \frac{3 \cdot 2^{m}+3\cdot (2^m-2)}{3^{2m}}. 
\end{equation}
It follows that
\begin{equation}
\begin{split}
\nu^{m,\mfy}&\left(\bigtimes_{1 \le i < j \le m}[R_{i,j},\infty) \right) \le \nu^{m,\mfx}\left(\bigtimes_{1 \le i < j \le m}[R_{i,j},\infty) \right) \\
& \Longleftrightarrow \qquad  2^{m+1}-2 \le \left(\frac{3}{2}\right)^{2m - 1}\\
& \Longleftrightarrow \qquad  m \ge 10. 
\end{split} 
\end{equation} 
So in this example to distinguish if a space of two points is dominated by one of three points one needs to consider the distance matrix distribution of order 10.
We do not know if one may formulate an upper bound on the necessary order depending on the number of points.
\end{example}

We close this section with some properties of $\lemetric$. 
\begin{prop}\label{p.prop.metric}
Let $\mfx,\mfy \in \bbM_1$. 
\begin{enumerate}
\item\label{p.prop.metric.a} If $\mfx \lemetric \mfy$ and $\nu^{2,\mfx} = \nu^{2,\mfy}$, then $\mfx = \mfy$.
\item\label{p.lemetric.compact} Let $A \subset \bbM_1$ be compact. Then the set $\bigcup_{\mfy \in A} \{\mfx \in \bbM_1:\, \mfx \lemetric \mfy\}$ is compact.
\item\label{p.prop.metric.c} There is a set  $\LUB(\mfx_1,\mfx_2)  \subset \bbM_1$, with the property: If $ \mfw \in \bbM_1$  with $\mfw \legen \mfx_1$ and $\mfw \legen \mfx_2$ then $\bar \mfz \le \mfw$ for some $\bar \mfz \in \LUB(\mfx_1,\mfx_2) $ implies $\bar \mfz = \mfw$.
\end{enumerate}
\end{prop}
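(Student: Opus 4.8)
\textbf{Part (\ref{p.prop.metric.a}).} The plan is to run the mechanism behind Proposition~\ref{p.prop.measure}(\ref{p.prop.measure.a}) on distances instead of masses. Choose representatives with $X=\supp(\mu_X)$, $Y=\supp(\mu_Y)$ and a measure-preserving sub-isometry $\tau\colon Y\to X$ witnessing $\mfx\lemetric\mfy$. On the probability space $(Y\times Y,\mu_Y^{\otimes 2})$ set $U(y_1,y_2):=r_X(\tau y_1,\tau y_2)$ and $V(y_1,y_2):=r_Y(y_1,y_2)$. Then $U\le V$ pointwise, the law of $U$ is the image of $\mu_X^{\otimes2}$ under $(x_1,x_2)\mapsto r_X(x_1,x_2)$, i.e.\ $\nu^{2,\mfx}$ (since $\tau$ is measure preserving), and the law of $V$ is $\nu^{2,\mfy}$; by hypothesis these coincide. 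Testing with any bounded, continuous, strictly increasing $\phi\colon\R\to\R$ (e.g.\ $\phi=\arctan$, to avoid integrability issues when distances are unbounded) gives $\E[\phi(V)-\phi(U)]=0$ with $\phi(V)-\phi(U)\ge0$, hence $U=V$ $\mu_Y^{\otimes 2}$-a.s. As $\tau$ is $1$-Lipschitz, $U$ and $V$ are continuous, so $\{U=V\}$ is closed and of full measure, hence equals $\supp(\mu_Y^{\otimes2})=Y\times Y$. Thus $\tau$ is a measure-preserving isometry and $\mfx=\mfy$.

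\textbf{Part (\ref{p.lemetric.compact}).} I would reduce this to Proposition~\ref{p.legen.compact} rather than redo the embedding argument. Writing $S:=\bigcup_{\mfy\in A}\{\mfx\in\bbM_1:\mfx\lemetric\mfy\}$, I claim $S=\bbM_1\cap\bigcup_{\mfy\in A}\{\mfx\in\bbM:\mfx\legen\mfy\}$. The inclusion ``$\subseteq$'' holds because $\lemetric$ implies $\legen$ (Remark~\ref{r.equi.def}). For ``$\supseteq$'': if $\mfx\in\bbM_1$ and $\mfx\legen\mfy$ for some $\mfy\in A\subseteq\bbM_1$, then the dominating measure $\mu_Y'\le\mu_Y$ of Definition~\ref{d.legen} has total mass $\overline{\mfx}=1=\overline{\mfy}$, forcing $\mu_Y'=\mu_Y$, so the accompanying map is a measure-preserving sub-isometry and $\mfx\lemetric\mfy$. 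Now $\bigcup_{\mfy\in A}\{\mfx\in\bbM:\mfx\legen\mfy\}$ is compact by Proposition~\ref{p.legen.compact} (note $A$ is compact in $\bbM$), and $\bbM_1$ is closed in $\bbM$ since the total mass $\overline{\,\cdot\,}$ is Gromov-weakly continuous; hence $S$ is compact.

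\textbf{Part (\ref{p.prop.metric.c}).} I would simply take $\LUB(\mfx_1,\mfx_2)$ to be the set of $\legen$-maximal elements of the set $C:=\{\mfw\in\bbM_1:\mfw\legen\mfx_1\text{ and }\mfw\legen\mfx_2\}$ of common lower bounds; $C\neq\emptyset$ since the one-point probability space $[\{*\},0,\delta_*]$ lies in it. First I would record that $C$ is compact: each $\{\mfx\in\bbM:\mfx\legen\mfx_i\}$ is compact by Proposition~\ref{p.legen.compact} (with $A=\{\mfx_i\}$) and closed by Theorem~\ref{t.legen.pots}, $\bbM_1$ is closed, so $C$ is a closed subset of a compact set. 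Since $\legen$ is a closed partial order (Theorem~\ref{t.legen.pots}), $C$ is a compact partially ordered space, and in such a space every chain $\mcD\subseteq C$ has an upper bound in $C$: the up-sets $\{\mfw\in C:d\legen\mfw\}$, $d\in\mcD$, are closed and have the finite intersection property (the largest of finitely many chain elements belongs to all of them), so their intersection is nonempty by compactness. Zorn's lemma then gives that $C$ has maximal elements and, applying the same argument inside $\{\mfw\in C:\mfw_0\legen\mfw\}$, that every $\mfw_0\in C$ lies $\legen$-below one. Let $\LUB(\mfx_1,\mfx_2)$ be the collection of these maximal elements; it should coincide with the set characterised more explicitly in Section~\ref{s.constr.lub}. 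Finally, if $\mfw\in\bbM_1$ with $\mfw\legen\mfx_1$ and $\mfw\legen\mfx_2$ then $\mfw\in C$, and whenever $\bar\mfz\in\LUB(\mfx_1,\mfx_2)$ satisfies $\bar\mfz\legen\mfw$, maximality of $\bar\mfz$ in $C$ forces $\bar\mfz=\mfw$, which is the asserted property.

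\textbf{Main obstacle.} Parts (\ref{p.prop.metric.a}) and (\ref{p.lemetric.compact}) are short once the right inputs are assembled; the substantive point is the compactness-plus-Zorn argument in (\ref{p.prop.metric.c}). The care needed there is to genuinely combine the \emph{closedness} of $\legen$ (Theorem~\ref{t.legen.pots}) with the \emph{compactness} of $C$ (from Proposition~\ref{p.legen.compact}) so that the finite-intersection-property argument runs, i.e.\ so that $C$ really behaves like a compact pospace and chains admit upper bounds; without closedness this fails (compare the remark after Theorem~\ref{t.legen.pots} that the related relation $\le'$ is not closed). A minor technical subtlety in (\ref{p.prop.metric.a}) is to test with a bounded strictly monotone function rather than the identity, since distances in an mm space need not be bounded.
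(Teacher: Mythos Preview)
Your argument for part~(\ref{p.prop.metric.a}) is correct and more self-contained than the paper's, which simply cites Lemma~2.6 in \cite{shioya} and remarks that it also follows from Theorem~\ref{t.gen.Eur.order}. Your ``same marginals plus pointwise inequality implies a.s.\ equality, then everywhere by continuity'' route is a clean, elementary alternative.

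However, your proof of part~(\ref{p.lemetric.compact}) is \emph{circular} in the paper's logical order. You invoke Proposition~\ref{p.legen.compact}, but if you look at Section~\ref{s.proofs}, the proof of Proposition~\ref{p.legen.compact} reads: ``Combining now Proposition~\ref{p.prop.measure}(\ref{p.lemeasure.compact}) and Proposition~\ref{p.prop.metric}(\ref{p.lemetric.compact}) gives the result.'' So Proposition~\ref{p.legen.compact} is established \emph{using} the very statement you are trying to prove. The paper instead proves (\ref{p.lemetric.compact}) directly from the GPW09 pre-compactness criterion (Proposition~7.1 in \cite{GPW09}): tightness of $\{\nu^{2,\mfx}:\mfx\in L(A)\}$ is immediate from Theorem~\ref{p.lemetric.char}, and the modulus bound $v_\delta(\mfx)\le v_\delta(\mfy)$ follows because a measure-preserving sub-isometry enlarges preimages of balls. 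Your identity $S=\bbM_1\cap\bigcup_{\mfy\in A}\{\mfx:\mfx\legen\mfy\}$ is correct and useful, but it does not let you bypass proving~(\ref{p.lemetric.compact}) first.

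For part~(\ref{p.prop.metric.c}) the paper explicitly says ``(\ref{p.prop.metric.c}) can be deduced by Zorn's Lemma,'' so your strategy is sanctioned; the paper prefers the explicit optimal-coupling construction of Section~\ref{s.constr.lub} (Proposition~\ref{prop.lub}) because it identifies the elements of $\LUB$ concretely. Your compact-pospace/Zorn argument is fine in spirit, but note that your compactness of $C$ again rests on Proposition~\ref{p.legen.compact}, so it inherits the circularity above. Once (\ref{p.lemetric.compact}) is proved directly, you can recover compactness of $C$ without Proposition~\ref{p.legen.compact}: as you observed, on $\bbM_1$ the relation $\legen$ collapses to $\lemetric$, so $C=\{\mfw\in\bbM_1:\mfw\lemetric\mfx_1\}\cap\{\mfw\in\bbM_1:\mfw\lemetric\mfx_2\}$ is an intersection of two sets compact by (\ref{p.lemetric.compact}).
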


We note that (\ref{p.prop.metric.c}) can be deduced by Zorn's Lemma. But in contrast to the other partial orders, we can characterize $\LUB(\mfx_1,\mfx_2)$ in this situation explicitly using optimal couplings for the involved measures. We will study the set $\LUB$ in Section~\ref{s.constr.lub}.

\begin{proof}
(\ref{p.prop.metric.a}) is Lemma 2.6 in \cite{shioya}. But we note that \eqref{p.prop.measure.a} is also a direct consequence of Theorem \ref{t.gen.Eur.order}. \par 
(\ref{p.lemetric.compact}) This is 3.$\frac{1}{2}$.15(c) in \cite{gromov}, but for completeness we will give a proof. Set 
\begin{equation}
 L(A) = \bigcup_{\mfy \in A} \{\mfx \in \bbM_1:\, \mfx \lemetric \mfy\}. 
\end{equation}
According to Proposition~7.1 in \cite{GPW09}, the set $L(A)$ is compact (note that it is closed) if:
\begin{align}
\label{e.nu2} &\left\{ \nu^{2,\mfy};\, \mfy \in L(A) \right\} \subset \mcM_1([0,\infty))\text{ is tight}\\
\label{e.tr54}&\sup_{\mfx \in L(A)} v_\delta(\mfx) \xrightarrow{\delta \to 0} \, 0,
\end{align}
where for $\mfx = [X,r,\mu]$, $\delta, \eps >0$ and $B_{\eps}^r(y) := \{x \in X: \, r(x,y)<\eps\}$
\begin{equation} \label{modul}
 v_{\delta}(\mfx) =  \inf \left\{ \eps:\, \mu\left\{x\in X:\,  \mu(B_{\eps}^r(x))\leq \delta \right\} < \eps \right\}  \, .
\end{equation}
By Theorem~\ref{p.lemetric.char}, (\ref{e.nu2}) is straight forward, since $A$ is compact (see again Proposition 7.1 in \cite{GPW09}). \par 
To prove (\ref{e.tr54}) we take $\mfx \in L(A)$. Then we find a $\mfy \in A$ such that $\mfx = [X,r_X,\mu_X]$ $\lemetric \mfy = [Y,r_Y,\mu_Y]$. 
This implies the existence of a measure-preserving sub-isometry $\tau: \supp(\mu_Y) \to \supp(\mu_X)$. It follows that 
$B^{r_Y}_\eps(y) \subset \tau^{-1}\big(B^{r_X}_\eps(\tau(y))\big)$, for $y \in \supp(\mu_Y)$ and hence
\begin{equation}
 \begin{split}
  v_{\delta}(\mfx)&= \inf \left\{ \eps:\, \mu_X\left\{x\in X:\,  \mu_X(B_{\eps}^{r_X}(x))\leq \delta \right\} < \eps \right\} \\
  &= \inf \left\{ \eps:\, \mu_Y\left\{x\in X:\,  \tau^{-1}B^{r_X}_\eps(\tau(y))\leq \delta \right\} < \eps \right\} \\
  &\le v_{\delta}(\mfy).
 \end{split}
\end{equation}
Combining this with the fact that $A$ is compact, (\ref{e.tr54}) follows again by Proposition~7.1 in \cite{GPW09}.\par 

For (\ref{p.prop.metric.c}) see Section~\ref{s.constr.lub}.
\end{proof}

\subsection{The generalized Eurandom distance}\label{s.gen.Eur}

The Eurandom-distance was introduced in \cite{GPW09}, Section~10 and is generalized in \cite{MG}. We recall the definition and 
 some of the results. For details we refer to the mentioned papers. \par 

Let $\mfx = [X,r_X,\mu_X],  \mfy = [Y,r_Y,\mu_Y] \in \mathbb M_1$ and $\lambda > 0$ then the (modified) Eurandom-metric is given by: 
\begin{equation}\label{e.def.Eurandom}
\begin{split}
\dEur^\lambda &(\mfx,\mfy):= \\
&\inf_{\tilde \mu \in \Pi(\mu_X,\mu_Y)} \int_{(X\times Y)^2} \left|e^{-\lambda r_Y(y,y')}-e^{-\lambda r_X(x,x')}\right| \tilde \mu(d(x,y))\tilde \mu(d(x',y')),
\end{split}
\end{equation}
where the infimum is taken over all couplings $\Pi(\mu_X,\mu_Y) = \{\tilde \mu \in \mcM_1(X \times Y):\, \tilde{\mu} ( \cdot \times Y) = \mu_X \text{ and } \tilde{\mu}(X \times \cdot) =\mu_Y\}$. \par 
It is straight forward to generalize the above to finite metric measure spaces with $\overline{\mfx} = \overline{\mfy}$.

\begin{defn}\label{d.gen.Eur}
Let $\mfx,\mfy \in \bbM$, $\lambda > 0$.  The {\it generalized Eurandom metric} is defined as
\begin{equation}\label{eq.gen.Eurandom}
\dgEur^\lambda(\mfx,\mfy) := \inf_{\substack{\mfx',\mfy' \in \bbM,\ \overline{\mfx'} = \overline{\mfy'} \\ \mfx' \lemeasure \mfx, \ \mfy' \lemeasure  \mfy}} \left(D^\lambda(\mfx',\mfy'; \mfx,\mfy) + \dEur^\lambda(\mfx',\mfy') \right),
\end{equation}
where
\begin{equation}
\begin{split}
D^\lambda(\mfx',\mfy'; \mfx,\mfy) = \int (1 - &e^{-\lambda r}) \nu^{2, \mfx }(dr) -  \int (1 - e^{-\lambda r}) \nu^{2, \mfx'}(dr) \\
&+ \int (1 - e^{-\lambda r}) \nu^{2, \mfy}(dr) -  \int (1 - e^{-\lambda r}) \nu^{2, \mfy'}(dr).
\end{split}
\end{equation}
\end{defn}

Before we give the connection to $\legen$, we note that the generalized Eurandom distance is really a generalization of the 
Eurandom distance in the sense that $\overline{\mfx} = \overline{\mfy}$ implies $\dgEur^\lambda(\mfx,\mfy) = \dEur(\mfx,\mfy)$. 
Moreover one can prove that it metricizes the Gromov-weak topology on $\bbM$ (see \cite{MG} for details). \par 

%

We are now ready to give the main result of this section: 

\begin{thm}\label{t.gen.Eur.order}
Let $\mfx,\mfy \in \bbM$ with $\mfx \legen \mfy$, then
\begin{equation}
\dgEur^\lambda(\mfx,\mfy) =   \int (1- e^{-\lambda r} )\nu^{2,\mfy}(dr) - \int (1-e^{-\lambda r} ) \nu^{2,\mfx}(dr).
\end{equation}
\end{thm}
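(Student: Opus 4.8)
The plan is to bracket $\dgEur^\lambda(\mfx,\mfy)$ between $I(\mfy)-I(\mfx)$ from both sides, where I abbreviate $I(\mfz):=\int(1-e^{-\lambda r})\,\nu^{2,\mfz}(dr)$. First I would collect two elementary facts. (i) Since $\nu^{2,\mfz}$ is a measure of total mass $\overline{\mfz}^{\,2}$, one has $\int e^{-\lambda r}\,\nu^{2,\mfz}(dr)=\overline{\mfz}^{\,2}-I(\mfz)$. (ii) $\lemeasure$-monotonicity of $I$: if $\mfx'\lemeasure\mfx$, then the sub-measure-preserving isometry of Definition~\ref{d.lemeasure} shows $\nu^{2,\mfx'}\le\nu^{2,\mfx}$ as measures on $\R_+$, hence $I(\mfx')\le I(\mfx)$ because $1-e^{-\lambda r}\ge 0$. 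I would also note the crude lower bound $\dEur^\lambda(\mfu,\mfv)\ge|I(\mfu)-I(\mfv)|$ valid whenever $\overline{\mfu}=\overline{\mfv}$: for any coupling $\tilde\mu\in\Pi(\mu_U,\mu_V)$, pulling the modulus out of the double integral in \eqref{e.def.Eurandom} leaves $\bigl|\int e^{-\lambda r}\nu^{2,\mfv}(dr)-\int e^{-\lambda r}\nu^{2,\mfu}(dr)\bigr|$ (the $U$- and $V$-marginals of $\tilde\mu\otimes\tilde\mu$ push forward under the distance maps to $\nu^{2,\mfu}$ and $\nu^{2,\mfv}$), and by (i) together with $\overline{\mfu}=\overline{\mfv}$ this equals $|I(\mfu)-I(\mfv)|$.

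For the lower bound $\dgEur^\lambda(\mfx,\mfy)\ge I(\mfy)-I(\mfx)$ I would fix an arbitrary admissible pair $(\mfx',\mfy')$ in \eqref{eq.gen.Eurandom}, i.e.\ $\overline{\mfx'}=\overline{\mfy'}$, $\mfx'\lemeasure\mfx$, $\mfy'\lemeasure\mfy$; rewrite $D^\lambda(\mfx',\mfy';\mfx,\mfy)=(I(\mfx)-I(\mfx'))+(I(\mfy)-I(\mfy'))$; and add the estimate $\dEur^\lambda(\mfx',\mfy')\ge I(\mfy')-I(\mfx')$ to get
\[
D^\lambda(\mfx',\mfy';\mfx,\mfy)+\dEur^\lambda(\mfx',\mfy')\ \ge\ I(\mfx)+I(\mfy)-2I(\mfx')\ \ge\ I(\mfy)-I(\mfx),
\]
the last inequality being fact (ii) for $\mfx'\lemeasure\mfx$. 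Taking the infimum over admissible pairs finishes this direction.

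For the upper bound I would invoke Remark~\ref{r.equi.def}: $\mfx\legen\mfy$ provides an mm space $\mfz$ with $\overline{\mfz}=\overline{\mfx}$ and $\mfx\lemetric\mfz\lemeasure\mfy$, so $(\mfx',\mfy')=(\mfx,\mfz)$ is admissible in \eqref{eq.gen.Eurandom} and $D^\lambda(\mfx,\mfz;\mfx,\mfy)=I(\mfy)-I(\mfz)$. It then remains to bound $\dEur^\lambda(\mfx,\mfz)\le I(\mfz)-I(\mfx)$. For this I would take the measure-preserving sub-isometry $\tau:\supp(\mu_Z)\to\supp(\mu_X)$ of Definition~\ref{d.lemetric} and plug into \eqref{e.def.Eurandom} the coupling $\tilde\mu\in\Pi(\mu_X,\mu_Z)$ obtained as the image of $\mu_Z$ under $z\mapsto(\tau(z),z)$ (a coupling because $\mu_Z\circ\tau^{-1}=\mu_X$). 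Since $r_X(\tau(z_1),\tau(z_2))\le r_Z(z_1,z_2)$, the integrand $\bigl|e^{-\lambda r_Z(z_1,z_2)}-e^{-\lambda r_X(\tau(z_1),\tau(z_2))}\bigr|$ collapses to $e^{-\lambda r_X(\tau(z_1),\tau(z_2))}-e^{-\lambda r_Z(z_1,z_2)}$, whose integral against $\mu_Z^{\otimes 2}$ is $\int e^{-\lambda r}\nu^{2,\mfx}(dr)-\int e^{-\lambda r}\nu^{2,\mfz}(dr)=I(\mfz)-I(\mfx)$ by (i) and $\overline{\mfx}=\overline{\mfz}$. Combining, $\dgEur^\lambda(\mfx,\mfy)\le I(\mfy)-I(\mfz)+I(\mfz)-I(\mfx)=I(\mfy)-I(\mfx)$, which with the lower bound gives the theorem.

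The measure-theoretic bookkeeping — that $\tau$ living on $\supp(\mu_Z)$ suffices to build the coupling, and that the push-forwards of $\mu_X^{\otimes 2}$, $\mu_Z^{\otimes 2}$ through the distance maps are $\nu^{2,\mfx}$, $\nu^{2,\mfz}$ — I expect to be routine. The one point that needs care sits in the lower bound: only $\mfx'\lemeasure\mfx$ and $\mfy'\lemeasure\mfy$ are available, and the estimate closes precisely because one uses the branch $\dEur^\lambda(\mfx',\mfy')\ge I(\mfy')-I(\mfx')$ (so that monotonicity applies to $\mfx'$); the other branch leaves $I(\mfx)+I(\mfy)-2I(\mfy')$, which only bounds the quantity from below by the useless $I(\mfx)-I(\mfy)$. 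So the actual content of the argument is lining up the two one-sided facts (ii) and the Eurandom lower bound with matching signs.
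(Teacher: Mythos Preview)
Your proof is correct and follows essentially the same strategy as the paper: the upper bound via the sub-isometry coupling (the paper isolates this as Lemma~\ref{Eur.metric.dom}) and the lower bound via the crude estimate $\dEur^\lambda(\mfx',\mfy')\ge|I(\mfy')-I(\mfx')|$ combined with $\lemeasure$-monotonicity of $I$. Your lower bound is in fact slightly cleaner than the paper's: you work with an arbitrary admissible pair and then take the infimum, whereas the paper first invokes the existence of minimizers (cited to \cite{MG}) and then runs an equivalent but more roundabout $\min$-computation; your choice of the branch $I(\mfy')-I(\mfx')$ so that only $I(\mfx')\le I(\mfx)$ is needed is exactly the right simplification.
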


In order to prove this, we start by proving the analogue for the (non-generalized) Eurandom distance: 

 \begin{lem}\label{Eur.metric.dom}
Let $\mfx,\mfy \in \bbM_1$. Assume that $\mfx \lemetric \mfy$. Then the following holds: 
\begin{equation}
\dEur(\mfx,\mfy) = \int 1- e^{-\lambda r} d\nu^{2,\mfy} -  \int 1- e^{-\lambda r} d\nu^{2,\mfx}.
\end{equation}
\end{lem}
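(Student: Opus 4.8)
The plan is to prove the two inequalities $\dEur^\lambda(\mfx,\mfy)\ge \int(1-e^{-\lambda r})\,\nu^{2,\mfy}(dr)-\int(1-e^{-\lambda r})\,\nu^{2,\mfx}(dr)$ and $\le$ separately; the right-hand side is literally attained by one specific coupling, so the heart of the matter is the lower bound, and that too is elementary. Throughout I use that $\mfx,\mfy\in\bbM_1$, so $\mu_X^{\otimes 2}$ and $\mu_Y^{\otimes 2}$ (hence $\nu^{2,\mfx},\nu^{2,\mfy}$) are probability measures; in particular $\int e^{-\lambda r}\,\nu^{2,\mfx}(dr)=1-\int(1-e^{-\lambda r})\,\nu^{2,\mfx}(dr)$, and likewise for $\mfy$. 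This lets me pass freely between $e^{-\lambda r}$ and $1-e^{-\lambda r}$.

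\textbf{Lower bound.} Fix any coupling $\tilde\mu\in\Pi(\mu_X,\mu_Y)$. Since the function $(x,y,x',y')\mapsto e^{-\lambda r_Y(y,y')}$ depends only on $(y,y')$ and $(x,y,x',y')\mapsto e^{-\lambda r_X(x,x')}$ only on $(x,x')$, integrating out the unused coordinates against the marginals gives $\int e^{-\lambda r_Y(y,y')}\,\tilde\mu(d(x,y))\tilde\mu(d(x',y'))=\int e^{-\lambda r}\,\nu^{2,\mfy}(dr)$ and $\int e^{-\lambda r_X(x,x')}\,\tilde\mu(d(x,y))\tilde\mu(d(x',y'))=\int e^{-\lambda r}\,\nu^{2,\mfx}(dr)$, independently of $\tilde\mu$. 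Hence, by the triangle inequality for $|\cdot|$ inside the integral in \eqref{e.def.Eurandom}, the integrand against $\tilde\mu\otimes\tilde\mu$ is bounded below by $\bigl|\int e^{-\lambda r}\,\nu^{2,\mfy}-\int e^{-\lambda r}\,\nu^{2,\mfx}\bigr|$. Now $r\mapsto 1-e^{-\lambda r}$ is increasing, so $\mfx\lemetric\mfy$ gives $\int(1-e^{-\lambda r})\,\nu^{2,\mfx}\le\int(1-e^{-\lambda r})\,\nu^{2,\mfy}$ — either by invoking Theorem~\ref{p.lemetric.char} for $m=2$, or, more directly, by noting that for the measure-preserving sub-isometry $\tau$ of Definition~\ref{d.lemetric} one has $\int(1-e^{-\lambda r})\,\nu^{2,\mfx}(dr)=\int(1-e^{-\lambda r_X(\tau(y),\tau(y'))})\,\mu_Y^{\otimes2}\le\int(1-e^{-\lambda r_Y(y,y')})\,\mu_Y^{\otimes2}=\int(1-e^{-\lambda r})\,\nu^{2,\mfy}(dr)$. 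Equivalently $\int e^{-\lambda r}\,\nu^{2,\mfy}\le\int e^{-\lambda r}\,\nu^{2,\mfx}$, so the absolute value equals $\int(1-e^{-\lambda r})\,\nu^{2,\mfy}-\int(1-e^{-\lambda r})\,\nu^{2,\mfx}$. Taking the infimum over $\tilde\mu$ yields the lower bound.

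\textbf{Upper bound.} Take the measure-preserving sub-isometry $\tau:\supp(\mu_Y)\to\supp(\mu_X)$ from Definition~\ref{d.lemetric}: $\mu_Y\circ\tau^{-1}=\mu_X$ and $r_X(\tau(y_1),\tau(y_2))\le r_Y(y_1,y_2)$. As a $1$-Lipschitz map $\tau$ is continuous, hence Borel, and so is $y\mapsto(\tau(y),y)$; let $\tilde\mu$ be the push-forward of $\mu_Y$ under this map, i.e.\ $\tilde\mu(d(x,y))=\delta_{\tau(y)}(dx)\,\mu_Y(dy)$. Its second marginal is $\mu_Y$ and its first marginal is $\mu_Y\circ\tau^{-1}=\mu_X$, so $\tilde\mu\in\Pi(\mu_X,\mu_Y)$. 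Plugging $\tilde\mu$ into \eqref{e.def.Eurandom} and using $r_X(\tau(y),\tau(y'))\le r_Y(y,y')$ to resolve the absolute value as $e^{-\lambda r_X(\tau(y),\tau(y'))}-e^{-\lambda r_Y(y,y')}\ge 0$, the double integral collapses to $\int e^{-\lambda r_X(\tau(y),\tau(y'))}\,\mu_Y^{\otimes2}-\int e^{-\lambda r_Y(y,y')}\,\mu_Y^{\otimes2}=\int e^{-\lambda r}\,\nu^{2,\mfx}(dr)-\int e^{-\lambda r}\,\nu^{2,\mfy}(dr)=\int(1-e^{-\lambda r})\,\nu^{2,\mfy}(dr)-\int(1-e^{-\lambda r})\,\nu^{2,\mfx}(dr)$, which bounds $\dEur^\lambda(\mfx,\mfy)$ from above.

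Combining the two bounds gives the claimed equality. There is no serious obstacle here: the only points requiring a word of care are the measurability of $\tau$ and of the graph map $y\mapsto(\tau(y),y)$ (immediate from $\tau$ being $1$-Lipschitz), the verification of the marginals of the graph coupling, and the interchange between $\int e^{-\lambda r}$ and $\int(1-e^{-\lambda r})$, which is legitimate exactly because $\mfx,\mfy\in\bbM_1$ makes $\nu^{2,\mfx},\nu^{2,\mfy}$ probability measures. The mild subtlety worth flagging explicitly is that, for the lower bound, one must rule out the ``wrong sign'': a priori $\bigl|\int e^{-\lambda r}\,\nu^{2,\mfy}-\int e^{-\lambda r}\,\nu^{2,\mfx}\bigr|$ could equal $\int(1-e^{-\lambda r})\,\nu^{2,\mfx}-\int(1-e^{-\lambda r})\,\nu^{2,\mfy}$, and it is precisely the monotonicity of $1-e^{-\lambda r}$ together with $\mfx\lemetric\mfy$ that fixes the sign.
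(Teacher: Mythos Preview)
Your proof is correct and follows essentially the same route as the paper: the upper bound via the graph coupling $\tilde\mu(d(x,y))=\delta_{\tau(y)}(dx)\mu_Y(dy)$ is identical, and the lower bound via $\int|a-b|\ge\bigl|\int(a-b)\bigr|$ together with the marginal computation is exactly the paper's inequality~\eqref{e.mg22}. One cosmetic slip: in the lower bound you write ``the integrand \dots\ is bounded below by'' when you mean the \emph{integral}; and note that the paper's version of the lower bound actually does not need $\mfx\lemetric\mfy$ or the sign discussion at all, since $\int(1-e^{-\lambda r})\,\nu^{2,\mfy}-\int(1-e^{-\lambda r})\,\nu^{2,\mfx}\le\bigl|\int e^{-\lambda r}\,\nu^{2,\mfx}-\int e^{-\lambda r}\,\nu^{2,\mfy}\bigr|$ holds unconditionally.
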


\begin{proof}
Let $\tau:\supp(\mu_Y) \rightarrow \supp(\mu_X)$ be a measure-preserving sub-isometry and define the measure $\tilde \mu$
on $\supp(\mu_X) \times \supp(\mu_Y)$ by setting $\tilde \mu(dx,dy) = \delta_{\tau(y)}(dx) \mu_Y(dy)$. Then $\tilde \mu$ is a coupling of $\mu_X$ and $\mu_Y$ and 
\begin{equation}
\begin{split}
\dEur^\lambda (\mfx,\mfy) &\le  \int |e^{-\lambda r_Y(y,y')}-e^{-\lambda r_X(x,x')}| \tilde \mu(d(x,y))\tilde  \mu(d(x',y')) \\
&= \int e^{-\lambda r_X(\tau(y),\tau(y'))} - e^{-\lambda r_Y(y,y')}  \tilde  \mu(d(x,y))\tilde \mu(d(x',y')) \\
&= \int 1- e^{-\lambda r} d\nu^{2,\mfy} -  \int 1- e^{-\lambda r} d\nu^{2,\mfx}
\end{split}
\end{equation} 
and ``$\le$'' follows. If $\tilde \mu$ is an arbitrary  coupling  of $\mu_X$ and $\mu_Y$, then
\begin{equation}\label{e.mg22}
\begin{split}
\int 1- e^{-\lambda r} \nu^{2,\mfy}&(dr) -  \int 1-e^{-\lambda r} \nu^{2,\mfx}(dr) \\
&\le \int |e^{-\lambda r_Y(y,y')}-e^{-\lambda r_X(x,x')}| \tilde \mu(d(x,y)) \tilde\mu(d(x',y')).
\end{split}
\end{equation}
\end{proof}

We are now ready to prove Theorem \ref{t.gen.Eur.order}:

\begin{proof}[Proof of Theorem \ref{t.gen.Eur.order}]
By definition there is a $\mfy' \in \bbM$ such that $\mfx \lemetric \mfy' \lemeasure \mfy$. First,
``$\le$'' follows if we choose $\mfx' = \mfx,\ \mfy' = \mfy'$ in the definition of $\dgEur^\lambda$ 
and apply Lemma~\ref{Eur.metric.dom} to this situation. \par 

For the ``$\ge$'' direction, let $\mfx'=[X',r_X',\mu_X']$, $\mfy' =[Y',r_Y',\mu_Y'] \in \bbM$, 
$\overline{\mfx'} = \overline{\mfy'}$ be minimizers of $\dgEur^\lambda(\mfx,\mfy)$. Such minimizers do always exist  (see \cite{MG}). 
By \eqref{e.mg22} we have 
\begin{equation}
\dEur^\lambda(\mfx',\mfy') \ge \left|\int 1- e^{-\lambda r} \nu^{2,\mfy'}(dr) -  \int 1-e^{-\lambda r} \nu^{2,\mfx'}(dr)\right| 
\end{equation}
If we set $f(r):= 1-e^{-\lambda r}$ and write $\nu^\mfx(f):= \int f d\nu^{2,\mfx}$, then this implies: 
\begin{equation}
 \begin{split}
\dgEur^\lambda(\mfx,\mfy) &\ge \nu^{\mfy}(f) +\nu^{\mfx}(f) - \nu^{\mfy'}(f) - \nu^{\mfx'}(f) + \left|\nu^{\mfy'}(f) - \nu^{\mfx'}(f) \right| \\
&= \nu^{\mfy}(f) +\nu^{\mfx}(f) - \nu^{\mfy'}(f) - \nu^{\mfx'}(f) \\
&{}\hspace{3cm}+   \nu^{\mfy'}(f)  + \nu^{\mfx'}(f)  - 2 \nu^{\mfx'}(f) \wedge\nu^{\mfy'}(f) \\
&=  \nu^{\mfy}(f) +\nu^{\mfx}(f)  - 2 \nu^{\mfx}(f) \wedge\nu^{\mfy}(f) \\
&= \nu^{\mfy}(f) +\nu^{\mfx}(f)  - 2 \nu^{\mfx}(f)  \\
&= \int (1- e^{-\lambda r}) \nu^{2,\mfy}(dr) -  \int (1- e^{-\lambda r}) \nu^{2,\mfx}(dr).
 \end{split}
\end{equation}
\end{proof}

\subsection{``Least upper bounds'' for \texorpdfstring{$\lemetric$}{le.dist}}\label{s.constr.lub}

We will now construct explicitly  the set of ``least upper bounds'' for $\lemetric$ using the properties of the Eurandom distance. Let $\mfx_1 = [X_1,r_1,\mu_1]$ and $\mfx_2 = [X_2,r_2,\mu_2]$ be both in $\bbM_1$. Consider an optimal coupling $Q := Q_{\mfx_1,\mfx_2}^\lambda \in \mcM_1(X_1\times X_2)$ s.t. the Eurandom distance 
\begin{equation}\label{eq.minim.Eurandom}
\dEur^{\lambda}(\mfx_1,\mfx_2) =  \int  \, |e^{-\lambda r_1(x_1,x_1')} - e^{-\lambda r_2(x_2,x_2')} |Q(\dx(x_1',x_2'))  Q(\dx(x_1,x_2)) 
\end{equation}
is minimized for a $\lambda > 0$. Such a coupling always exists (this is Lemma 1.7 in \cite{sturm} or alternatively Theorem 4.1 in \cite{Vil09}). We define

\begin{equation}
 \bar{r}((x_1,x_2),(x_1',x_2')) := r_1(x_1,x_1') \vee r_2(x_2,x_2'), \quad x_1,x_1' \in X_1,\, x_2,x_2' \in X_2
\end{equation}
and 
\begin{equation}
 \bar{\mfz} = [X_1 \times X_2, \bar{r}, Q] .
\end{equation}

\begin{prop}\label{prop.lub} Let $\mfx_1$, $\mfx_2,\bar{\mfz}$, $\lambda > 0$ be as above, then the following hold:
\begin{enumerate}
\item\label{i.lub.1} It is true that $\mfx_i \lemetric \bar{\mfz}$, $i=1,2$.
\item\label{i.lub.2} We have the following identity: 
\begin{equation}
\dEur^\lambda(\mfx_1,\mfx_2) = \dEur^\lambda(\mfx_1,\bar{\mfz}) + \dEur^\lambda(\bar{\mfz},\mfx_2).
\end{equation}
\item\label{i.lub.3} Let $\mfw = [X_3,r_3,\mu_3] \in \bbM_1$ with $ \mfx_i \lemetric \mfw$, $i=1,2$. If $\mfw \lemetric \bar{\mfz}$, then we have $\mfw = \bar{\mfz}$. 
\end{enumerate}
\end{prop}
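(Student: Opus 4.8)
The plan is to handle the three parts in turn via the Eurandom‑distance bookkeeping of Lemma~\ref{Eur.metric.dom}. Throughout abbreviate $f(r):=1-e^{-\lambda r}$ and $\nu^{\mfz}(f):=\int f\,d\nu^{2,\mfz}$ for $\mfz\in\bbM_1$, so that $\int e^{-\lambda r}\,\nu^{2,\mfz}(dr)=1-\nu^{\mfz}(f)$ (as $\nu^{2,\mfz}$ is a probability measure). For~(\ref{i.lub.1}) I would check that the coordinate projections $\pi_i\colon X_1\times X_2\to X_i$, restricted to $\supp(Q)$, are the maps required by Definition~\ref{d.lemetric}: the relations $\pi_i(\supp Q)\subset\supp(\mu_i)$ and $Q\circ\pi_i^{-1}=\mu_i$ are exactly the statement that $Q$ has $i$‑th marginal $\mu_i$, and the sub‑isometry property follows from $\bar r\big((x_1,x_2),(x_1',x_2')\big)=r_1(x_1,x_1')\vee r_2(x_2,x_2')\ge r_i\big(\pi_i(\cdot),\pi_i(\cdot)\big)$. (One also records that $\bar r$ is a metric inducing the product topology and $Q$ a probability measure, so $\bar\mfz\in\bbM_1$ is well defined.) Hence $\mfx_i\lemetric\bar\mfz$.

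The substance is~(\ref{i.lub.2}), and the one idea is the elementary identity $|a-b|=a+b-2(a\wedge b)$ applied with $a=e^{-\lambda r_1(x_1,x_1')}$ and $b=e^{-\lambda r_2(x_2,x_2')}$, together with $a\wedge b=e^{-\lambda(r_1(x_1,x_1')\vee r_2(x_2,x_2'))}=e^{-\lambda\bar r}$. Integrating this against $Q(d(x_1,x_2))\,Q(d(x_1',x_2'))$, the left‑hand side equals $\dEur^\lambda(\mfx_1,\mfx_2)$ because $Q$ is an optimal coupling, see~\eqref{eq.minim.Eurandom}; on the right‑hand side the first two summands integrate to $\int e^{-\lambda r}\,\nu^{2,\mfx_1}(dr)$ and $\int e^{-\lambda r}\,\nu^{2,\mfx_2}(dr)$ after pushing the marginals forward, and the third to $\int e^{-\lambda r}\,\nu^{2,\bar\mfz}(dr)$ by the definition of $\bar r$ and of the distance matrix measure of $\bar\mfz=[X_1\times X_2,\bar r,Q]$. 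After rearranging, this reads $\dEur^\lambda(\mfx_1,\mfx_2)=2\,\nu^{\bar\mfz}(f)-\nu^{\mfx_1}(f)-\nu^{\mfx_2}(f)$. On the other hand, (\ref{i.lub.1}) and Lemma~\ref{Eur.metric.dom} give $\dEur^\lambda(\mfx_1,\bar\mfz)=\nu^{\bar\mfz}(f)-\nu^{\mfx_1}(f)$ and $\dEur^\lambda(\bar\mfz,\mfx_2)=\nu^{\bar\mfz}(f)-\nu^{\mfx_2}(f)$, and the sum of these is exactly the same expression; this proves~(\ref{i.lub.2}).

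Finally, for~(\ref{i.lub.3}) I would run a triangle‑inequality squeeze. From $\mfx_1\lemetric\mfw$, $\mfx_2\lemetric\mfw$ and $\mfw\lemetric\bar\mfz$, Lemma~\ref{Eur.metric.dom} yields $\dEur^\lambda(\mfx_i,\mfw)=\nu^{\mfw}(f)-\nu^{\mfx_i}(f)$ for $i=1,2$ and $\dEur^\lambda(\mfw,\bar\mfz)=\nu^{\bar\mfz}(f)-\nu^{\mfw}(f)\ge0$. Combining the triangle inequality $\dEur^\lambda(\mfx_1,\mfx_2)\le\dEur^\lambda(\mfx_1,\mfw)+\dEur^\lambda(\mfw,\mfx_2)=2\nu^{\mfw}(f)-\nu^{\mfx_1}(f)-\nu^{\mfx_2}(f)$ with the identity of~(\ref{i.lub.2}) forces $\nu^{\bar\mfz}(f)\le\nu^{\mfw}(f)$, whereas $\dEur^\lambda(\mfw,\bar\mfz)\ge0$ forces the reverse inequality; hence $\dEur^\lambda(\mfw,\bar\mfz)=0$ and, $\dEur^\lambda$ being a metric on $\bbM_1$, $\mfw=\bar\mfz$. (Alternatively, to avoid using that $\dEur^\lambda$ separates points: the equality $\nu^{2,\mfw}(f)=\nu^{2,\bar\mfz}(f)$ together with the domination $\nu^{2,\mfw}([t,\infty))\le\nu^{2,\bar\mfz}([t,\infty))$ for all $t\ge0$, which follows from $\mfw\lemetric\bar\mfz$ via Theorem~\ref{p.lemetric.char}, forces $\nu^{2,\mfw}=\nu^{2,\bar\mfz}$, and then Proposition~\ref{p.prop.metric}\,(\ref{p.prop.metric.a}) gives $\mfw=\bar\mfz$.) I do not expect a genuine obstacle: the computations are routine Eurandom bookkeeping, and the only point requiring insight is that the choice $\bar r=r_1\vee r_2$ is precisely what makes the single optimal coupling $Q$ do double duty --- witnessing $\mfx_i\lemetric\bar\mfz$ and making the Eurandom cost split additively through $\bar\mfz$ --- which in turn is dictated by the identity $|a-b|=a+b-2(a\wedge b)$ and $e^{-\lambda(s\vee t)}=e^{-\lambda s}\wedge e^{-\lambda t}$.
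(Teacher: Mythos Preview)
Your proof is correct and follows essentially the same route as the paper: the coordinate projections for (\ref{i.lub.1}), the identity $|a-b|=a+b-2(a\wedge b)$ with $e^{-\lambda(s\vee t)}=e^{-\lambda s}\wedge e^{-\lambda t}$ integrated against $Q\otimes Q$ for (\ref{i.lub.2}), and the combination of Lemma~\ref{Eur.metric.dom} with the triangle inequality to force $\dEur^\lambda(\mfw,\bar\mfz)=0$ for (\ref{i.lub.3}). The only cosmetic difference is that the paper writes (\ref{i.lub.3}) directly in terms of Eurandom distances (subtracting the identities $\dEur^\lambda(\bar\mfz,\mfw)=\dEur^\lambda(\mfx_i,\bar\mfz)-\dEur^\lambda(\mfx_i,\mfw)$ and averaging over $i=1,2$) while you pass through the auxiliary quantities $\nu^{\cdot}(f)$; the content is identical.
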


\begin{proof} 

\eqref{i.lub.1} Consider the mapping $\pi_i : X_1 \times X_2 \to X_i$, $ (x_1,x_2) \mapsto x_i$, $i=1,2$. This mapping is measure-preserving on the correponding image set and a sub-isometry. 

\eqref{i.lub.2} We use Theorem \ref{t.gen.Eur.order} to calculate: 
\begin{equation}
 \begin{split}
\dEur^\lambda (\mfx_1,\mfx_2)  &= \int \left|e^{-\lambda r_1(x,y)} - e^{-\lambda r_2(x,y)} \right| Q(d(x,x')) Q(d(y,y')) \\
&= \int e^{-\lambda r_1(x,y)} + e^{-\lambda r_2(x,y)}  - 2 e^{-\lambda r_1(x,y)}\wedge e^{-\lambda r_2(x,y)} Q(d(x,x')) Q(d(y,y')) \\
&= \int e^{-\lambda r_1(x,y)} + e^{-\lambda r_2(x,y)} - 2 e^{-\lambda r_1(x,y) \vee r_2(x,y)} Q(d(x,x')) Q(d(y,y')) \\
&= \dEur^\lambda(\mfx_1,\bar{\mfz}) + \dEur^\lambda(\bar{\mfz},\mfx_2).
 \end{split}
\end{equation}

\eqref{i.lub.3} Note that Theorem \ref{t.gen.Eur.order} gives
\begin{align}
\dEur^\lambda (\bar{\mfz},\mfw) &= \dEur^\lambda (\mfx_1,\bar{\mfz})-\dEur^\lambda (\mfx_1,\mfw),\\
\dEur^\lambda (\bar{\mfz},\mfw) &= \dEur^\lambda (\mfx_2,\bar{\mfz})-\dEur^\lambda (\mfx_2,\mfw).
\end{align}
This implies 
\begin{equation}
\dEur^\lambda (\bar{\mfz},\mfw) = \frac{1}{2}\left(\dEur^\lambda (\mfx_1,\bar{\mfz}) + \dEur^\lambda (\mfx_2,\bar{\mfz})\right) 
-\frac{1}{2}\left(\dEur^\lambda (\mfx_1,\mfw) + \dEur^\lambda (\mfx_2,\mfw)\right) .
\end{equation}
Now we can use the result in \eqref{i.lub.2} and the triangle inequality to get 
\begin{equation}
\dEur^\lambda (\bar{\mfz},\mfw) \le  \frac{1}{2} \dEur^\lambda (\mfx_1,\mfx_2)  - \frac{1}{2} \dEur^\lambda (\mfx_1,\mfx_2) = 0.
\end{equation}
And therefore $\mfw = \bar{\mfz}$. 
%
\end{proof}

\section{Proofs of the main results}\label{s.proofs}



This section contains the proofs of Section~\ref{ss.legen}.

We start the proofs with a result which states that the definition of $\legen$ is a consequence of a similar statement where the roles of $\lemeasure$ and $\lemetric$ are reversed.

\begin{lem}\label{l.equi.def}
Let $\mfx = [X,r_X,\mu_X],\mfy= [Y,r_Y,\mu_Y],\mfx'= [X',r_X',\mu_X'] \in \bbM$ such that  $\mfx = [X',r_X',\mu_X]$$\lemeasure  [X',r_X',\mu_X'] $$\lemetric \mfy$. Then there is a Borel-measure $\mu_Y'$ on $Y$ such that $\mfx \lemetric [Y,r_Y,\mu_Y'] \lemeasure \mfy$.
\end{lem}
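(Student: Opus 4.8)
The plan is to pull the measure $\mu_X$ back along the sub-isometry coming from the $\lemetric$ part of the hypothesis, damping it by a Radon--Nikodym density so that the pulled-back measure stays below $\mu_Y$.

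First I would unpack the hypotheses. By definition of $\lemeasure$, after passing to a convenient representative we may assume $\mu_X\le\mu_X'$ as Borel measures on $X'$; I also replace $\mfx$ by the representative $[X',r_X',\mu_X]$, so that below $X=X'$ and $r_X=r_X'$. By definition of $\lemetric$ (extended to mm-spaces of equal mass), $\overline{\mfx'}=\overline{\mfy}$ and there is a measure-preserving sub-isometry $\sigma:\supp(\mu_Y)\to\supp(\mu_X')$, i.e.\ $\mu_Y\circ\sigma^{-1}=\mu_X'$ and $r_Y(y_1,y_2)\ge r_X'(\sigma(y_1),\sigma(y_2))$ for all $y_1,y_2\in\supp(\mu_Y)$; note $\sigma$ is $1$-Lipschitz, hence continuous. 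Since $\mu_X\le\mu_X'$, Radon--Nikodym provides a Borel $f:X'\to[0,1]$ with $\mu_X=f\,\mu_X'$, and I set $\mu_Y':=(f\circ\sigma)\,\mu_Y$ on $Y$. As $0\le f\circ\sigma\le1$ this gives $\mu_Y'\le\mu_Y$, hence $[Y,r_Y,\mu_Y']\lemeasure[Y,r_Y,\mu_Y]=\mfy$ (witnessed by the inclusion $\supp(\mu_Y)\hookrightarrow Y$). By the image-measure formula, for every Borel $A\subseteq X'$
\[
 \mu_Y'(\sigma^{-1}(A))=\int_{\sigma^{-1}(A)}f\circ\sigma\,d\mu_Y=\int_A f\,d(\mu_Y\circ\sigma^{-1})=\int_A f\,d\mu_X'=\mu_X(A),
\]
so $\mu_Y'\circ\sigma^{-1}=\mu_X$ on $X'$; in particular $\mu_Y'(Y)=\mu_X(X')=\overline{\mfx}$.

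The one delicate point — which I regard as the main, if minor, obstacle — is that $\lemetric$ requires a map \emph{into $\supp(\mu_X)$}, whereas $\sigma$ a priori only lands in $\supp(\mu_X')$, which can be strictly larger. Here I would use continuity of $\sigma$: for $y\in\supp(\mu_Y')$ and $\eps>0$, the $1$-Lipschitz property gives $B^{r_Y}_\eps(y)\subseteq\sigma^{-1}\big(B^{r_X'}_\eps(\sigma(y))\big)$, whence
\[
 \mu_X\big(B^{r_X'}_\eps(\sigma(y))\big)=\mu_Y'\Big(\sigma^{-1}\big(B^{r_X'}_\eps(\sigma(y))\big)\Big)\ge\mu_Y'\big(B^{r_Y}_\eps(y)\big)>0,
\]
so $\sigma(y)\in\supp(\mu_X)$. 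Therefore $\tau:=\sigma|_{\supp(\mu_Y')}$ (well-defined since $\supp(\mu_Y')\subseteq\supp(\mu_Y)$) maps $\supp(\mu_Y')$ into $\supp(\mu_X)$, is a sub-isometry as a restriction of $\sigma$, and satisfies $\mu_Y'\circ\tau^{-1}=\mu_Y'\circ\sigma^{-1}=\mu_X$ because $\mu_Y'$ is concentrated on $\supp(\mu_Y')$. Together with the equality of masses this yields $\mfx\lemetric[Y,r_Y,\mu_Y']$, and combined with the $\lemeasure$ step above we get $\mfx\lemetric[Y,r_Y,\mu_Y']\lemeasure\mfy$, as claimed; the remaining verifications are routine measure-theoretic bookkeeping.
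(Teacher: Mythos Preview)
Your proof is correct and takes a considerably more direct route than the paper's. Where you use Radon--Nikodym to obtain a density $f\in[0,1]$ with $\mu_X=f\,\mu_X'$ and simply set $\mu_Y':=(f\circ\sigma)\,\mu_Y$, the paper instead builds $\mu_Y'$ by a limiting construction: it approximates $\supp(\mu_Y)$ by compact sets $K_n$, invokes the (nontrivial) fact that the push-forward operator $\tau_\ast:\mcM_f(K_n)\to\mcM_f(\tau(K_n))$ is surjective Borel to find measures $\rho^n$ with prescribed image, defines approximate measures $\nu_Y^n$ only on the sub-$\sigma$-field $\sigma(\tau)$, extends them to $\mcB(Y)$ via Lubin's extension theorem, and finally passes to a subsequential weak limit. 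Your argument bypasses all of this machinery and produces $\mu_Y'$ with an explicit density in one line; the only mildly delicate point---that $\sigma$ actually lands in $\supp(\mu_X)$ when restricted to $\supp(\mu_Y')$---you handle cleanly via the $1$-Lipschitz property. The paper's heavier route would be needed if one only had a surjectivity-type hypothesis on $\tau$ without the ability to transport densities, but here $\mu_X\le\mu_X'$ gives $\mu_X\ll\mu_X'$ with bounded density immediately, so your Radon--Nikodym shortcut is both valid and preferable.
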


\begin{proof}
Let $\tau:\supp(\mu_Y) \to \supp(\mu_X')$ be a measure-preserving sub-isometry and take w.l.o.g. $Y = \supp(\mu_Y)$, $X' = \supp(\mu_X')$. We note that since $\mu_Y$ is tight, there is a sequence 
of compact sets $(K_n)_{n \in \mathbb N}$ such that $\mu_Y(K_n) \rightarrow \mu_Y(Y)$ and, since $\tau$ is measure-preserving:
\begin{equation}
\mu_Y(Y) = \mu_X'(X') \ge \mu_X'(\tau(K_n)) = \mu_Y(\tau^{-1}(\tau(K_n))) \ge \mu_Y(K_n),
\end{equation}
where we used the fact that $\tau(K_n)$ as the continuous image of a compact set is compact hence Borel. This implies $\mu_X'(\tau(K_n)) \rightarrow \mu_X'(X')$ and $\tilde \mu_X^{n}(A):= \mu_X'(A\cap\tau(K_n))\rightarrow \mu_X'(A)$ for all measurable $A \subset X'$. \par 
Fix a $n \in \mathbb N$ and recall that $\tau :K_n \to \tau(K_n)$ surjective Borel implies that the push-forward operator $\tau_\ast: \mathcal M_f(K_n) \to \mathcal M_f(\tau(K_n))$ is surjective Borel  (see  \cite{doberkat}, Proposition 1.101) and therefore we find a Borel-measure $\rho^n$ on $K_n$ (and hence on $Y$) such that $\rho^n \circ \tau^{-1} = \tilde \mu_X^n - \mu_X^n$, where $\mu_X^n := \mu_X(\cdot \cap \tau(K_n))$. Define
\begin{equation}
\nu_Y^n(A) := \mu_Y(A \cap K_n) - \rho^n(A),\qquad \forall A \in \sigma(\tau),
\end{equation}
where $\sigma(\tau) \subset \mathcal B(Y)$ is the sigma-field generated by $\tau$. We note that $\sigma(\tau)$ is countable generated 
and hence we can apply Lubin's Theorem (see \cite{lubin}) that gives an (not necessary unique) extension $\tilde \mu^n_Y$ of $\nu_Y^n$ to $\mathcal B(Y)$. Following the proof it is easy to see that $\tilde \mu^n_Y$ is a finite measure with $\tilde \mu^n_Y \le \mu_Y(\cdot \cap \tau(K_n))\le \mu_Y(\cdot)$ and in addition: 
\begin{equation}\label{eq.trans.2}
\begin{split}
\tilde \mu^n_Y(\tau^{-1}(A)) &= \mu_Y(\tau^{-1}(A) \cap K_n) - (\tilde \mu_X^n(A) - \mu_X^n(A)) \\
&\stackrel{n \rightarrow \infty}{\longrightarrow}  \mu_Y(\tau^{-1}(A)) - (\mu_X'(A) - \mu_X(A)) \\
&= \mu_X(A).
\end{split}
\end{equation}
Here we used that $\tilde \mu_X^n(A) \rightarrow \mu_X'(A)$ implies $\mu_X^n(A) \rightarrow \mu_X(A)$, since $\mu_X \le \mu_X'$. \par 
Finally observe that $\tilde \mu^n_Y \le \mu_Y$ implies relative compactness of $\{\tilde \mu_Y^{n}:\ n \in \mathbb N\}$ and if we take a limit point $\mu_Y'$ (along any subsequence) we get $\mu_Y'\le \mu_Y$. In addition by (\ref{eq.trans.2}) and the continuous mapping theorem, we find that $\mu_Y' \circ \tau^{-1} = \mu_X$.
\end{proof}

\begin{proof}[Proof of Theorem \ref{t.legen.pots}]
Reflexivity is clear and the  transitivity  is a consequence of  Remark \ref{r.equi.def} and Lemma \ref{l.equi.def}. \par 
For the antisymmetry observe that $\mfx \legen \mfy$ and $\mfy \legen \mfx$ implies that $\overline{\mfx} = \overline{\mfy}$ and hence we get 
$\mfx \lemetric \mfy$ and $\mfy \lemetric \mfx$. Since $\lemetric$ is a partial order, the result follows. \par 
Now let $\mfx_n,\mfy_n,\mfx,\mfy \in  \bbM$, $n \in \mathbb N$ with $\mfx_n \rightarrow \mfx$, $\mfy_n \rightarrow \mfy$ and $\mfx_n \legen \mfy_n$ 
for all $n \in \mathbb N$. By Remark \ref{r.equi.def} we find a sequence $(\mfy_n')_{n \in \mathbb N}$ in $\bbM$ with $\mfx_n \lemetric \mfy_n' \lemeasure \mfy_n$
for all $n \in \mathbb N$. By Proposition \ref{p.prop.measure} (\ref{p.lemeasure.compact}) we find $\mfy' \in \bbM$ such that $\mfy_n' \rightarrow \mfy'$ along some 
subsequence, where we suppress the dependence. Now, since both partial orders are closed, we get that $\mfx\lemetric \mfy'$ $\lemeasure \mfy$ and the result follows. 
\end{proof}

\begin{proof}[Proof of Proposition \ref{p.legen.compact}]
Let $L(A):= \bigcup_{\mfy \in A} \{\mfx \in \bbM:\, \mfx \legen \mfy\}$ and $(\mfx_n)_{n \in \mathbb N}$  be a sequence in $L(A)$. Then there is a sequence
$(\mfy_n)_{n \in \mathbb N}$ in $A$ and $(\mfy_n')_{n \in \mathbb N}$ in $\bbM$ such that $\mfx_n \lemetric \mfy_n' \lemeasure \mfy_n$ for all 
$n \in \mathbb N$ (see Remark \ref{r.equi.def}). Combining now Proposition \ref{p.prop.measure} (\ref{p.lemeasure.compact}) and Proposition \ref{p.prop.metric}
(\ref{p.lemetric.compact}) gives the result. 
 \end{proof}

We finally prove:

\begin{proof}[Proof of Proposition \ref{p.legen.easy}]
 Let $\mfx = [X,r_X,\mu_X]$ and $\mfy = [Y,r_Y,\mu_Y]$.
  We only verify the first statement for $a\in [0,1]$.
  Let $a\cdot \mfx = [X,r_X,a\mu_X]$.
  Use the mapping $\tau:X \to X$ with $\tau(x) = x$.
  Then $\tau$ is an isometry and $a \mu_X \circ \tau^{-1} = a \mu_X  \leq \mu_X$.
  All other statements may be verified similarly.
\end{proof}

\section{Applications}\label{s.applications}

We will now consider some applications for the partial orders.

\subsection{The Cartesian semigroup by Evans and Molchanov}\label{ss.EM14}

In \cite{EM14} a semigroup operation on $\bbM_1$ was introduced.
For $\mfx=[X,r_X,\mu_X], \mfy = [Y,r_Y,\mu_Y] \in \bbM_1$ they defined
\begin{equation}
 \mfx \boxplus \mfy = [X\times Y, r_X \oplus r_Y, \mu_X \otimes \mu_Y],
\end{equation}
where $r_X \oplus r_Y((x_1,y_1),(x_2,y_2)) = r_X(x_1,x_2) + r_Y(y_1,y_2)$ for $x_1,x_2 \in X$ and $y_1,y_2 \in X$.

Since the semigroup $(\bbM_1, \boxplus)$ is cancellative (see their Proposition~3.6) it is clear that there is also a partial order on $\bbM_1$ defined by
\begin{equation}
 \mfx \leq_{\boxplus} \mfy \ :\Leftrightarrow \ \exists z\in \bbM_1 \text{ s.t. } \mfx \boxplus \mfz = \mfy \,.
\end{equation}

This partial order is a special case of our order $\lemetric$ in the following sense.
\begin{prop}
 Let $\mfx, \mfy \in \bbM_1$ with $\mfx \leq_\boxplus \mfy$.
 Then $\mfx \lemetric \mfy$.
\end{prop}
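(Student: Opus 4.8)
The statement should follow almost immediately by exhibiting the projection onto the first coordinate as the required measure-preserving sub-isometry. First I would unwind the hypothesis: $\mfx \leq_\boxplus \mfy$ means there is $\mfz = [Z,r_Z,\mu_Z] \in \bbM_1$ with $\mfx \boxplus \mfz = \mfy$, so we may represent $\mfy$ concretely as $[X \times Z, r_X \oplus r_Z, \mu_X \otimes \mu_Z]$, where $r_X \oplus r_Z((x_1,z_1),(x_2,z_2)) = r_X(x_1,x_2) + r_Z(z_1,z_2)$.

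Next I would take $\tau := \pi_X$, the coordinate projection $X \times Z \to X$, $(x,z) \mapsto x$, restricted to $\supp(\mu_X \otimes \mu_Z)$. Two things need checking. For the measure-preserving property, since $\mu_Z$ is a probability measure we have $(\mu_X \otimes \mu_Z)\circ \pi_X^{-1}(A) = (\mu_X \otimes \mu_Z)(A \times Z) = \mu_X(A)$ for every Borel $A \subset X$, i.e.\ $\mu_Y \circ \tau^{-1} = \mu_X$. For the sub-isometry property, because $r_Z \geq 0$,
\begin{equation}
 r_Y\big((x_1,z_1),(x_2,z_2)\big) = r_X(x_1,x_2) + r_Z(z_1,z_2) \ \geq\ r_X(x_1,x_2) = r_X\big(\tau(x_1,z_1),\tau(x_2,z_2)\big),
\end{equation}
which is exactly \eqref{e.legen.r}.

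Finally there is a small bookkeeping point about supports that I would address: $\supp(\mu_X \otimes \mu_Z) = \supp(\mu_X) \times \supp(\mu_Z)$, and since $\mu_Z$ is a probability measure $\supp(\mu_Z) \neq \varnothing$, so $\pi_X$ maps $\supp(\mu_Y)$ \emph{onto} $\supp(\mu_X)$; hence $\tau$ has the right domain and codomain as in Definition~\ref{d.lemetric}. I do not expect any real obstacle here — the content is entirely in noticing that the Cartesian product only adds distance and that tensoring with a probability measure leaves the first marginal unchanged; the only thing to be slightly careful about is passing to supports, which is routine for product measures.
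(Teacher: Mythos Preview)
Your proposal is correct and is essentially identical to the paper's own proof: both use the coordinate projection $\tau(x,z)=x$ from $X\times Z$ to $X$, verify that it pushes $\mu_X\otimes\mu_Z$ forward to $\mu_X$ (using $\mu_Z(Z)=1$), and observe that $r_X\oplus r_Z \ge r_X$ gives the sub-isometry. Your extra care about supports is a harmless refinement the paper omits.
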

\begin{proof}
 Let $\mfz \in \bbM_1$ such that $\mfx \boxplus \mfz = \mfy$.
 We may write $\mfy = [Y,r_Y,\mu_Y] = [X \times Z, r_X \oplus r_Z, \mu_X \otimes \mu_Z]$.
 Define the map $\tau: X \times Z \to X$ via $\tau(x,z) = x$.
 Then it is true that $\mu_Y \circ \tau^{-1} = \mu_X$, so $\tau$ is measure preserving and moreover for $y_1 =(x_1,z_1),y_2=(x_2,z_2) \in Y$
 \begin{equation}
  r_Y(y_1,y_2) = r_X(x_1,x_2) + r_Z(z_1,z_2) \geq r_X(x_1,x_2)  = r_X(\tau(y_1),\tau(y_2)) \, .
 \end{equation}
 Thus, $\mfx \lemetric \mfy$.
\end{proof}
An alternative proof via polynomials is the use of Lemma 3.2(b) in \cite{EM14} and Proposition \ref{p.lemetric.char} here.

\begin{rem}
 As Evans and Molchanov mention in the introduction they also could have chosen a different form of defining the metric $r_X \oplus r_Y$.
 They chose the $l_1$-addition, but also an $l_p$ addition of the form $r_X \oplus_p r_Y((x_1,y_1),(x_2,y_2)) = (r_X(x_1,x_2)^p + r_Y(y_1,y_2)^p)^{1/p}$ for $p \geq 1$ had led to a cancellative semigroup.
 For the order related to such a semigroup the previous proposition still holds.
\end{rem}

\subsection{General facts on stochastic dominance}\label{ss.stoch.dom}

Consider two random variables taking values in a partially ordered space $E$.
In which sense can the former be smaller than the latter? Even for $E = \R$ there are various concepts of a stochastic order.
We refer to the book of \cite{shaked2007stochastic} for a recent overview and collect some of the important results for us.

Let  $\mathcal X,\mathcal Y$ be  two  random variable with values in $\bbM$ and $\lambda > 0$. We define the Wasserstein distance (recall the definition of $\dgEur^\lambda$ in Section~\ref{s.gen.Eur}):
\begin{equation}
d_W^\lambda (\mathcal L(\mathcal X),\mathcal L(\mathcal Y)) := \inf_{Q} E_Q[\dgEur^\lambda (\mathcal X,\mathcal Y)],
\end{equation}
where the infimum is taken over all couplings of $\mathcal L(\mathcal X) $ and $\mathcal L(\mathcal Y) $.

\begin{rem}
Since $\dgEur^\lambda$ generates the Gromov-weak topology, convergence in $d_W^\lambda$ implies convergence in the weak topology on $\mathcal M_1(\bbM)$ (for all $\lambda > 0$). If we consider the space $\bbM_{\le K}$, i.e.\ mm-spaces with total mass bounded by some $K \ge 0$, then $\mathbb M_{\le K}$ is bounded (with respect to $\dgEur^\lambda$) and therefore $d_W^\lambda$ metricizes the weak topology on $\mathcal M_1(\bbM_{\mathbb K})$ (for all $\lambda > 0$) (see \cite{GS} for details). 
\end{rem}

\begin{defn}\label{d.stochastic.order}
 Let $(E,\prec)$ be a partially ordered set. For two random variables $\mathcal X$ and $\mathcal Y$ with values in $E$ we say that $\mathcal X \prec_{st} \mathcal Y$ ($\mathcal X$ is stochastically $\prec$-smaller) iff $\E[f(\mathcal X)] \leq \E[f(\mathcal Y)]$ for all bounded continuous increasing functions $f$.
\end{defn}

We recall the following result of Strassen \cite{Strassen}: 

\begin{prop}\label{p.Strassen.1}
Let $E$ be polish, $\prec$ be a closed partial order on $E$ and $\pi_1, \pi_2$ be two Borel probability measures on $E$. Then the following is equivalent:
\begin{enumerate}
\item There is  a Borel probability measure $\tilde \pi$ on $E \times E$, with marginals $\pi_1$ and $\pi_2$ such that $\tilde \pi(\{(x,y) \in E \times E: x \prec y\}) = 1$, 
\item For all real-valued bounded continuous increasing functions $f$ on $E$, $\int f d \pi_1 \le \int f d \pi_2$.
\end{enumerate}
\end{prop}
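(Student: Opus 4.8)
The implication (1) $\Rightarrow$ (2) is immediate and needs no structure: if $\tilde\pi$ is a coupling of $\pi_1,\pi_2$ concentrated on $\{(x,y):x\prec y\}$, then for every bounded continuous increasing $f$
\[
\int f\,d\pi_2-\int f\,d\pi_1=\int\big(f(y)-f(x)\big)\,\tilde\pi\big(d(x,y)\big)\ge 0 ,
\]
since $f(y)\ge f(x)$ for $\tilde\pi$-almost every $(x,y)$. All the content is therefore in (2) $\Rightarrow$ (1), which the plan is to prove by a Hahn--Banach separation argument after reducing to the compact case. The reduction is routine: $\pi_1,\pi_2$ are tight, $\Pi(\pi_1,\pi_2)$ is weakly compact, and the coupling demanded by (1) can be obtained as a weak limit along an exhaustion of $E$ by compact sets; this is in any case part of Strassen's original statement \cite{Strassen}, so one may simply quote it.

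So assume $E$ is compact metric and set $G:=\{(x,y)\in E\times E:\ x\prec y\}$, which is closed, hence compact, and nonempty (it contains the diagonal, by reflexivity). The set $\mcK:=\{\tilde\pi\in\mcM_1(E\times E):\ \tilde\pi(G)=1\}$ is convex and, because $G$ is closed so that $\tilde\pi\mapsto\tilde\pi(G)$ is upper semicontinuous, it is weakly closed inside the weakly compact set $\mcM_1(E\times E)$, hence weakly compact; therefore its image $\mcK':=\{(\tilde\pi\circ p_1^{-1},\,\tilde\pi\circ p_2^{-1}):\ \tilde\pi\in\mcK\}\subset\mcM_1(E)\times\mcM_1(E)$ under the two coordinate projections $p_1,p_2:E\times E\to E$ is convex and compact. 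One must show $(\pi_1,\pi_2)\in\mcK'$. Suppose not. By strict separation of the point $(\pi_1,\pi_2)$ from the compact convex set $\mcK'$ in $\mcM(E)\times\mcM(E)$ (whose dual is $C(E)\times C(E)$) there are $g,h\in C(E)$ and $c\in\R$ with
\[
\int g\,d\pi_1+\int h\,d\pi_2\ >\ c\ \ge\ \int g\,d\mu_1+\int h\,d\mu_2\qquad\text{for all }(\mu_1,\mu_2)\in\mcK' ;
\]
testing the right-hand inequality against $\delta_{(x,y)}\in\mcK$ for $(x,y)\in G$ gives $g(x)+h(y)\le c$ whenever $x\prec y$, and after replacing $(g,h)$ by $(g-\tfrac c2,h-\tfrac c2)$ we may take $c=0$.

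Now define $\psi(x):=-\sup\{h(y):\ x\prec y\}$. Transitivity gives $\{y:x_2\prec y\}\subseteq\{y:x_1\prec y\}$ when $x_1\prec x_2$, so $\psi$ is increasing; it is bounded, and it is lower semicontinuous since the correspondence $x\mapsto\{y:x\prec y\}$ has closed graph $G$ and compact values, so Berge's maximum theorem applies to $x\mapsto\sup\{h(y):x\prec y\}$. From $g(x)+h(y)\le 0$ for all $y\succ x$ one gets $g\le\psi$ pointwise, and taking $y=x$ gives $h\le-\psi$; hence
\[
0\ <\ \int g\,d\pi_1+\int h\,d\pi_2\ \le\ \int\psi\,d\pi_1-\int\psi\,d\pi_2 ,
\]
i.e.\ $\int\psi\,d\pi_2<\int\psi\,d\pi_1$. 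The final ingredient is that a closed order on a compact metric space is normally ordered (Nachbin), so that continuous increasing functions separate disjoint closed decreasing and closed increasing sets; from this one deduces that every bounded lower semicontinuous increasing function — in particular $\psi$ — is the pointwise limit of an increasing sequence $\psi_n$ of continuous increasing functions. Monotone convergence together with (2) then gives $\int\psi\,d\pi_1=\lim_n\int\psi_n\,d\pi_1\le\lim_n\int\psi_n\,d\pi_2=\int\psi\,d\pi_2$, contradicting the previous display, so $(\pi_1,\pi_2)\in\mcK'$, which is exactly (1).

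The step I expect to be the real obstacle is precisely this last one: the separation argument only returns a pair of \emph{continuous} functions $g,h$, from which one is naturally led to the merely semicontinuous increasing function $\psi$, and to feed it back into hypothesis (2) one must reconstruct genuinely continuous increasing test functions — this is where the order-topological input (Nachbin's separation theorem for compact ordered spaces, or equivalently the measure-extension device in Strassen's original proof) is genuinely needed. Everything else — the easy direction, the tightness reduction, and the Hahn--Banach step — is soft.
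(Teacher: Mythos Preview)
Your outline is correct, and it is considerably more than the paper provides: the paper's proof consists solely of the sentence ``See \cite{Strassen} or \cite{L99}'' and gives no argument at all. So there is nothing to compare at the level of the paper itself.

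Relative to Strassen's original, your route is a recognisable modern variant. Strassen does not run a direct Hahn--Banach separation on the set of marginal pairs; he deduces the result from his general marginal/extension theorem (his Theorem~7/11), with the order entering through the condition $\pi_1(C)\le\pi_2(C)$ for closed increasing sets $C$, and then relates that to continuous increasing test functions. Your approach packages the same ingredients differently: compactness of the coupling set, a separating pair $(g,h)\in C(E)^2$, and the conversion of that pair into a single increasing function $\psi$. You have correctly located the only genuinely nontrivial step --- upgrading the merely lower-semicontinuous increasing $\psi$ to continuous increasing approximants so that hypothesis~(b) can be invoked --- and correctly named the tool (Nachbin's normality of compact ordered spaces). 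The Berge step for lower semicontinuity of $\psi$ and the inequalities $g\le\psi$, $h\le-\psi$ are clean. The reduction from Polish to compact is, as you say, routine tightness plus weak compactness of couplings; if you want to be self-contained there, the only care needed is that the weak limit of couplings concentrated on the closed set $G$ stays concentrated on $G$ by the portmanteau theorem.
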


\begin{proof}
See \cite{Strassen} or \cite{L99}. 
\end{proof}

As a direct consequence of this proposition together with Theorem~\ref{t.gen.Eur.order}, we get:

\begin{prop}\label{c.Eurandom}
Let  $\mathcal X,\mathcal Y$ be  two  random variable with values in $\bbM$ and $\lambda > 0$. If  $\mathcal X \legen_{st} \mathcal Y$, then 
for all $\lambda > 0$:
\begin{equation}
d_W^\lambda (\mathcal X,\mathcal Y) = E\left[\int (1- e^{-\lambda r} )\nu^{2,\mathcal Y}(dr) \right] - E \left[\int (1-e^{-\lambda r} ) \nu^{2,\mathcal X}(dr)\right].
\end{equation}
\end{prop}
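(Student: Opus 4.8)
The plan is to read the identity off from two earlier results: Strassen's theorem (Proposition~\ref{p.Strassen.1}), which converts the hypothesis $\mathcal X\legen_{st}\mathcal Y$ into a coupling of $\mathcal L(\mathcal X),\mathcal L(\mathcal Y)$ concentrated on the order set $\{(\mfx,\mfy):\mfx\legen\mfy\}$, and Theorem~\ref{t.gen.Eur.order}, which evaluates $\dgEur^\lambda$ exactly on that set. Throughout I abbreviate $f(r):=1-e^{-\lambda r}$ and, for $\mfx\in\bbM$, $\nu^{\mfx}(f):=\int(1-e^{-\lambda r})\,\nu^{2,\mfx}(dr)$, as in the proof of Theorem~\ref{t.gen.Eur.order}; since $0\le f\le1$ and $f$ is increasing, $\mfx\mapsto\nu^{\mfx}(f)$ is a nonnegative, $\legen$-monotone functional on $\bbM$, and it is Borel because $\dgEur^\lambda$ metricizes the Gromov-weak topology (so both $\dgEur^\lambda$ and $\mfx\mapsto\nu^{\mfx}(f)$ are continuous). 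I may and will assume the two expectations on the right-hand side are finite, the remaining case being of no interest.

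For ``$\le$'' I would apply Proposition~\ref{p.Strassen.1} with $E=\bbM$ (which is Polish, by the remark after Definition~\ref{D.gromov}) and the closed partial order $\legen$ (Theorem~\ref{t.legen.pots}): the hypothesis $\mathcal X\legen_{st}\mathcal Y$ is precisely condition~(b) of that proposition for $\pi_1=\mathcal L(\mathcal X)$, $\pi_2=\mathcal L(\mathcal Y)$, so there is a coupling $\tilde\pi$ with $\tilde\pi(\{(\mfx,\mfy):\mfx\legen\mfy\})=1$. Realizing $(\mathcal X,\mathcal Y)\sim\tilde\pi$ on some probability space, Theorem~\ref{t.gen.Eur.order} yields $\dgEur^\lambda(\mathcal X,\mathcal Y)=\nu^{\mathcal Y}(f)-\nu^{\mathcal X}(f)$ almost surely; taking expectations and noting that $\tilde\pi$ is one of the couplings admissible in the definition of $d_W^\lambda$ gives $d_W^\lambda(\mathcal X,\mathcal Y)\le E[\nu^{\mathcal Y}(f)]-E[\nu^{\mathcal X}(f)]$.

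For ``$\ge$'' the key input is the pointwise estimate
\[
\dgEur^\lambda(\mfx,\mfy)\ \ge\ \bigl|\nu^{\mfy}(f)-\nu^{\mfx}(f)\bigr|\ \ge\ \nu^{\mfy}(f)-\nu^{\mfx}(f),\qquad\mfx,\mfy\in\bbM,
\]
valid for \emph{all} pairs, comparable or not. This is already contained in the ``$\ge$'' half of the proof of Theorem~\ref{t.gen.Eur.order}: picking minimizers $\mfx'\lemeasure\mfx$, $\mfy'\lemeasure\mfy$ with $\overline{\mfx'}=\overline{\mfy'}$ in the definition of $\dgEur^\lambda$, inequality~\eqref{e.mg22} gives $\dEur^\lambda(\mfx',\mfy')\ge|\nu^{\mfy'}(f)-\nu^{\mfx'}(f)|$, hence $\dgEur^\lambda(\mfx,\mfy)\ge\nu^{\mfy}(f)+\nu^{\mfx}(f)-2\,\nu^{\mfx'}(f)\wedge\nu^{\mfy'}(f)$; and $\mfx'\lemeasure\mfx$, $\mfy'\lemeasure\mfy$ force $\nu^{2,\mfx'}\le\nu^{2,\mfx}$, $\nu^{2,\mfy'}\le\nu^{2,\mfy}$ as measures (a product of pointwise-dominated finite measures is dominated), so $\nu^{\mfx'}(f)\wedge\nu^{\mfy'}(f)\le\nu^{\mfx}(f)\wedge\nu^{\mfy}(f)$ because $f\ge0$, and the displayed inequality follows. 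Given this, for an arbitrary coupling $Q$ of $\mathcal L(\mathcal X),\mathcal L(\mathcal Y)$ I integrate the inequality against $Q$; since the marginals of $Q$ are fixed, $E_Q[\dgEur^\lambda(\mathcal X,\mathcal Y)]\ge E_Q[\nu^{\mathcal Y}(f)-\nu^{\mathcal X}(f)]=E[\nu^{\mathcal Y}(f)]-E[\nu^{\mathcal X}(f)]$, and taking the infimum over $Q$ finishes the proof.

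I do not expect a genuine obstacle: the only step carrying content beyond bookkeeping is the general, non-comparable pointwise lower bound for $\dgEur^\lambda$, and that is an immediate by-product of the proof of Theorem~\ref{t.gen.Eur.order} --- which is precisely why the statement is a direct consequence of that theorem together with Strassen's result. The remaining points (measurability of the maps involved, finiteness of the expectations) are routine.
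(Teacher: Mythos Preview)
Your proof is correct and follows the same route the paper indicates: the paper merely asserts that the result is a direct consequence of Proposition~\ref{p.Strassen.1} together with Theorem~\ref{t.gen.Eur.order} and gives no further detail. You have supplied those details correctly, including the observation needed for the lower bound---that the pointwise inequality $\dgEur^\lambda(\mfx,\mfy)\ge|\nu^{\mfy}(f)-\nu^{\mfx}(f)|$ holds for \emph{all} pairs $\mfx,\mfy\in\bbM$ and can be read off from the ``$\ge$'' half of the proof of Theorem~\ref{t.gen.Eur.order}.
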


Let 
\begin{align}\label{e.tr163}
 \Pi_{\nearrow} := \{ \Phi \in \Pi \mid \Phi \text{ increasing}\} \qquad \text{and} \qquad  \Pi_{+,\nearrow} := \Pi_+ \cap \Pi_{\nearrow}.
\end{align}

Although it would be nice, we can not expect that increasing nonnegative polynomials $\Pi_{+,\nearrow}$ is enough to determine the stochastic order induced by $\legen$.  
This is not even true for polynomials in $\R$.
Nevertheless we may study the situation in which the stochastic order induced by $\Pi_{+,\nearrow}$ or $\Pi_{+}$ is just the right thing to look at.

\begin{defn}
 Let $(E,\prec)$ be a partially ordered set.
 For a cone $F \subset \{f: E \to \R \mid \text{increasing, bounded and measurable}\}$ define the stochastic order $\leq_\mcF$ on $\mcM_1(E)$ via
 \begin{equation}
  \mu \leq_\mcF \nu :\Leftrightarrow \int \mu(\dx x) \, f(x) \leq \int \nu(\dx x) \, f(x) \ \quad \forall f\in F.
 \end{equation}
 This definition extends to random variables in the obvious way.
\end{defn}

\begin{prop}
 The relations $\leq_{\Pi_{+,\nearrow}}$, $\leq_{\Pi_+}$  and $\leq_{\Pi_{\nearrow}}$ on $\mcM_1(\bbM)$  are partial orders.
\end{prop}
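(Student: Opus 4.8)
The plan is to reduce all three statements to a single measure-determinacy fact. Reflexivity of $\leq_F$ is trivial ($\int f\,d\mu\le\int f\,d\mu$) and transitivity is inherited from transitivity of $\le$ on $\R$, for \emph{any} cone $F$; so the only content is antisymmetry. Here I would exploit that $\Pi_{+,\nearrow}$ is contained in each of the three cones $\Pi_{+,\nearrow}$, $\Pi_+$, $\Pi_\nearrow$ (trivially in the first, and in the other two because $\Pi_{+,\nearrow}=\Pi_+\cap\Pi_\nearrow$). Hence, whenever $\mu$ and $\nu$ are $\le_F$-comparable in both directions, for $F$ any of the three cones, one obtains $\int\Phi\,d\mu=\int\Phi\,d\nu$ for every $\Phi\in\Pi_{+,\nearrow}$. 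Thus it suffices to prove the single claim: if $\int\Phi\,d\mu=\int\Phi\,d\nu$ for all $\Phi\in\Pi_{+,\nearrow}$, then $\mu=\nu$.

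To prove this claim I would first observe that $\Pi_{+,\nearrow}$ is closed under products: if $\Phi=\Phi^{m,\phi}$ and $\Psi=\Phi^{n,\psi}$ with $\phi,\psi$ nonnegative, increasing and bounded continuous, a Fubini computation shows that $\Phi\cdot\Psi$ is again a monomial of order $m+n$ whose integrand, $(\dr)\mapsto\phi\big((\dr_{ij})_{1\le i<j\le m}\big)\,\psi\big((\dr_{ij})_{m<i<j\le m+n}\big)$, is nonnegative, increasing and bounded continuous on $\R^{\binom{m+n}{2}}$. Consequently the algebra $\mcA(\Pi_{+,\nearrow})$ equals the linear span of $\Pi_{+,\nearrow}$, so the hypothesis $\int\Phi\,d\mu=\int\Phi\,d\nu$ extends by linearity to all $\Phi\in\mcA(\Pi_{+,\nearrow})$. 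Now I would invoke the density statement already used in the proof of Proposition~\ref{p.lemetric.pots}, namely that $\mcA(\Pi_{+,\nearrow})$ is dense in the algebra $\mcA(\Pi)$ of all polynomials; a standard approximation argument (bounded pointwise approximation together with dominated convergence, using that $\mu,\nu$ are probability measures) upgrades the equality to all of $\mcA(\Pi)$. Finally, since the monomials are convergence-determining for the Gromov-weak topology (Definition~\ref{D.gromov}; see \cite{ALW14a}), polynomials are measure-determining on $\mcM_1(\bbM)$, whence $\mu=\nu$.

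I expect the arithmetic part (reflexivity, transitivity, the reduction, the product-closure of $\Pi_{+,\nearrow}$) to be routine; the real input is the last step, which rests on (i) the density of $\mcA(\Pi_{+,\nearrow})$ in $\mcA(\Pi)$ in a topology strong enough to survive passage to integrals — this is precisely the input already bought in Proposition~\ref{p.lemetric.pots} — and (ii) the measure-determinacy of polynomials on $\mcM_1(\bbM)$. For (ii) one should be mildly careful, since polynomials are unbounded on $\bbM$; as throughout, the statement is understood for probability measures under which the relevant polynomials are integrable (e.g.\ those with all moments of the total mass finite, in particular all of $\mcM_1(\bbM_1)$), and on that class the argument above goes through verbatim.
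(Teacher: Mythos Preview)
Your argument is correct and follows essentially the same route as the paper's: the paper also dismisses reflexivity and transitivity as clear, argues that equality of $\int\Phi\,d\mu$ and $\int\Phi\,d\nu$ on $\Pi_{+,\nearrow}$ propagates to the generated algebra (which it claims ``coincides with $\Pi$''), and then invokes Theorem~1 of \cite{GPW09} for measure-determinacy. Your explicit reduction of the $\Pi_+$ and $\Pi_\nearrow$ cases to the $\Pi_{+,\nearrow}$ case via the inclusion $\Pi_{+,\nearrow}=\Pi_+\cap\Pi_\nearrow$ is a tidy addition the paper omits (it only writes out the $\Pi_{+,\nearrow}$ case), and your caveat about integrability of polynomials on $\bbM$ is a point the paper passes over silently.
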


\begin{proof}
 We only provide the proof for $\leq_{\Pi_{+,\nearrow}}$.
 It is clear that $\leq_{\Pi_{+,\nearrow}}$ is transitive and reflexive.
 For anti-symmetry let $\mu, \nu \in \mcM_1(\bbM)$ with $\mu(\Phi) \leq \nu(\Phi) \leq \mu(\Phi)$ for all $\Phi \in \Pi_{+,\nearrow}$. 
 Then  $\mu(\Phi) = \nu(\Phi)$ for all $\Phi \in \Pi_{+,\nearrow}$ and thus this equality holds for all $\Phi$ in the algebra generated by $\Pi_{+,\nearrow}$.
 One may check that this algebra coincides with $\Pi$ and so Theorem 1 in \cite{GPW09} allows to deduce that $\mu = \nu$.
\end{proof}

We close this section with the following observation. For a random variable $\mathcal X \in \bbM_1$ we define the real-valued random variable $R_{12}^{\mathcal X}$ (on a different probability space) with law
\begin{equation}
 P(R_{12}^{\mathcal X} \in A) = \E[\nu^{2,{\mathcal X}}(A)], \ A \in \mcB([0,\infty)).
\end{equation}
$R_{12}$ models the random distance which we obtain by randomly picking two points the space.

\begin{prop}\label{p.polynom.order}
 Suppose ${\mathcal X} \leq_{\Pi_{+,\nearrow}} {\mathcal Y}$ for random variables ${\mathcal X}, {\mathcal Y} \in \bbM_1$. Then $R_{12}^{\mathcal X}\leq R_{12}^{\mathcal Y}$ stochastically.
\end{prop}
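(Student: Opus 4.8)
The plan is to unwind both sides to statements about the second distance-matrix measures $\nu^{2,\mathcal X}$, $\nu^{2,\mathcal Y}$ and then apply the hypothesis to a single monomial of order $m=2$. Recall that for $[0,\infty)$-valued random variables $U,V$ one has $U \leq V$ stochastically as soon as $\E[g(U)] \leq \E[g(V)]$ for every bounded continuous increasing $g\colon \R \to \R$ (see \cite{shaked2007stochastic}); so it suffices to establish this for $U = R_{12}^{\mathcal X}$ and $V = R_{12}^{\mathcal Y}$. Fix such a $g$. By the definition of the law of $R_{12}^{\mathcal X}$ and the tower property, $\E[g(R_{12}^{\mathcal X})] = \E\big[\langle g,\nu^{2,\mathcal X}\rangle\big]$, and likewise for $\mathcal Y$; here I use that $\binom{2}{2}=1$, so that $g$ is already a bounded continuous function on $\R^{\binom{2}{2}}$ and $\langle g,\nu^{2,\mfz}\rangle = \Phi^{2,g}(\mfz)$.

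The only reason $\Phi^{2,g}$ need not lie in $\Pi_{+,\nearrow}$ is the sign of $g$, so the next step is to pass to $\phi := g + C$ with $C \geq \|g\|_\infty$: then $\phi \in C_b(\R^{\binom{2}{2}})$ is nonnegative and increasing, hence $\Phi^{2,\phi} \in \Pi_{+,\nearrow}$, and the hypothesis $\mathcal X \leq_{\Pi_{+,\nearrow}} \mathcal Y$ gives $\E[\Phi^{2,\phi}(\mathcal X)] \leq \E[\Phi^{2,\phi}(\mathcal Y)]$. Since $\mathcal X,\mathcal Y \in \bbM_1$ almost surely, $\nu^{2,\mathcal X}$ and $\nu^{2,\mathcal Y}$ are probability measures on $[0,\infty)$, so $\Phi^{2,\phi}(\mfz) = \langle g,\nu^{2,\mfz}\rangle + C$ and the additive constant cancels: subtracting $C$ from both sides yields $\E[\langle g,\nu^{2,\mathcal X}\rangle] \leq \E[\langle g,\nu^{2,\mathcal Y}\rangle]$, i.e.\ $\E[g(R_{12}^{\mathcal X})] \leq \E[g(R_{12}^{\mathcal Y})]$. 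As $g$ was arbitrary, the claimed stochastic dominance follows.

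There is essentially no obstacle here: the statement is a dictionary translation once one isolates the order-$2$ monomial, and the computations are routine. The only points deserving a word of care are (i) checking that $R_{12}^{\mathcal X}$ is a bona fide $[0,\infty)$-valued random variable, i.e.\ that $\mfz \mapsto \nu^{2,\mfz}(A)$ is measurable for Borel $A$ — which follows from measurability of the monomials together with a monotone-class argument — and (ii) the normalization $\overline{\mathcal X} = \overline{\mathcal Y} = 1$ built into $\bbM_1$, which is precisely what makes the shift by $C$ cancel; for \emph{un}normalized spaces one would only recover the analogous inequality for the total-mass-weighted second distance-matrix measures.
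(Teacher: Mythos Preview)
Your proof is correct and follows essentially the same route as the paper's: both reduce to the order-$2$ monomials and use that $\E[\phi(R_{12}^{\mathcal X})]=\E[\Phi^{2,\phi}(\mathcal X)]$, with the paper then invoking Strassen's theorem (Proposition~\ref{p.Strassen.1}) where you appeal directly to the test-function characterization of stochastic dominance on $\R$. You are simply more explicit about the shift $g\mapsto g+C$ needed to land in $\Pi_{+,\nearrow}$ and about the role of the normalization $\overline{\mathcal X}=\overline{\mathcal Y}=1$, which the paper leaves implicit.
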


\begin{rem}
 This means that the stochastic order induced by $\Pi_{+,\nearrow}$ allows to state dominance of the (expected) sampled distance between two chosen individuals.
\end{rem}

\begin{proof}
By definition, ${\mathcal X} \leq_{\Pi_{+,\nearrow}} {\mathcal Y}$ implies 
\begin{equation}
E[ \phi(  R_{12}^\mfX) ] \leq E [ \phi(R_{12}^\mfY) ]  \ \text{ for all increasing } \phi \ge 0.
\end{equation}
By Proposition \ref{p.Strassen.1} the result follows. 
\end{proof}

Of course in the previous proof it had sufficed only to know things for the second order increasing monomials.

\subsection{Random graphs}

Consider the Erd\"os-Renyi random graph with parameters $(n,p)$, $n \in \N$ and $p \in [0,1]$.
That is the random graph consisting of $n$ vertices and a random collection of the possible $\binom{n}{2}$ edges between these points; edges are undirected.
Each of the possible edges is present with probability $p$ and is not present with probability $1-p$ and those choices are made independently of the other edges.
One possible way to construct such an object is to have $\binom{n}{2}$ independent Bernoulli($p$)-variables $(X_{ij})_{i<j \in E_n}$ if $E_n= \{1,\dotsc,n\}$ is the vertex set of the graph.
If $X_{ij}=1$, then the edge between vertices $i$ and $j$ is present, otherwise it is not present.

Define the random metric measure space
\begin{equation}
 \ER(n,p) = \left[E_n, r_n, n^{-1} \sum_{i\in E_n} \delta_i \right] \, ,
\end{equation}
where $r_n$ is the minimal graph distance of the random graph with the convention that $r_n(i,j) := n$ if $i$ and $j$ are not connected by a path.

Then we may establish the following result.
\begin{thm}
 For $p>p'$ and $n\in \N$ it is true that
 \begin{equation}
  \ER(n,p) \lemetric \ER(n,p') .
 \end{equation}
Moreover, the process $(\ER(n,p))_{p\in [0,1]}$ is an increasing Markov process taking values in $\bbM_1$.
\end{thm}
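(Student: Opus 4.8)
The plan is to construct an explicit coupling of the Erdős–Rényi graphs $\ER(n,p)$ and $\ER(n,p')$ on a common probability space using the standard monotone coupling, and then show that the identity map on the vertex set realizes the required measure-preserving sub-isometry pointwise. First I would fix the vertex set $E_n = \{1,\dotsc,n\}$ once and for all, together with independent uniform random variables $(U_{ij})_{1\le i<j\le n}$ on $[0,1]$. For a parameter $q \in [0,1]$ let the edge between $i$ and $j$ be present iff $U_{ij} \le q$; this produces, simultaneously for all $q$, a coupled family of Erdős–Rényi graphs, and for $q' < q$ the edge set of the $q'$-graph is contained in that of the $q$-graph. Denote by $r_n^{(q)}$ the associated graph distance (with $r_n^{(q)}(i,j) = n$ if $i,j$ lie in different components). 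Adding edges can only shorten paths, so $q' < q$ implies $r_n^{(q)}(i,j) \le r_n^{(q')}(i,j)$ for all $i,j$ — note this also respects the convention $r_n = n$ for disconnected pairs, since $n$ is the largest value $r_n^{(q')}$ can take and the inequality is trivial there.

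With this in hand the first assertion is immediate on the coupled space: for $p > p'$ the identity map $\tau = \mathrm{id}: E_n \to E_n$ pushes $n^{-1}\sum_i \delta_i$ onto itself, so it is measure preserving, and by the monotonicity just observed $r_n^{(p)}(\tau(i),\tau(j)) = r_n^{(p)}(i,j) \le r_n^{(p')}(i,j)$ for all $i,j \in E_n$, which is exactly condition \eqref{e.legen.r} of Definition~\ref{d.lemetric}. Hence $\ER(n,p) \lemetric \ER(n,p')$ as mm-spaces (both are normalized, so this lives in $\bbM_1$); since $\lemetric$ depends only on the equivalence classes, the inequality in law of the two random mm-spaces follows. (If one wants the stochastic-dominance statement $\ER(n,p) \legen_{st} \ER(n,p')$ in the sense of Definition~\ref{d.stochastic.order}, it also falls out of this coupling, combined with closedness of $\legen$ from Theorem~\ref{t.legen.pots} and Strassen's Proposition~\ref{p.Strassen.1}.)

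For the second assertion, I would work with the same coupled family $(\ER(n,q))_{q\in[0,1]}$, reparametrized so that edges appear as $q$ increases; the process of interest $(\ER(n,p))_{p\in[0,1]}$ is then this family read in the reverse direction of the partial order (masses are fixed; only distances change, and they decrease as $p$ increases). The Markov property: at a given $p$, the graph is determined by $\{U_{ij}\le p\}$, and conditionally on the graph at level $p$ the future (levels $p'' > p$) depends only on which of the currently-absent edges have $U_{ij}$ in $(p,p'']$, and the $U_{ij}$ are independent — so conditionally on the present state the future is independent of the past, which is the Markov property. Measurability of $p \mapsto \ER(n,p)$ into $(\bbM_1, d_{\mathrm{GPr}})$ is clear since it is a right-continuous step function with finitely many jumps (the jumps occur at the finitely many values $U_{ij}$). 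Monotonicity of the trajectories with respect to $\lemetric$ (hence $\legen$) is exactly the computation in the previous paragraph applied to arbitrary $p < p''$.

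The only mild subtlety — and the place I would be most careful — is the edge case of disconnected graphs and the convention $r_n(i,j) = n$: one must check that this convention does not break either the triangle inequality for $r_n^{(q)}$ (it is a genuine metric: $n$ is an upper bound for every genuine graph distance on $n$ vertices, so no triangle inequality can fail) or the monotonicity $q' < q \Rightarrow r_n^{(q)} \le r_n^{(q')}$ (if $i,j$ are connected at level $q'$ they remain connected at level $q$ with a path no longer; if not connected at level $q'$ the value $n$ is maximal so the inequality is automatic). Everything else is routine bookkeeping with the monotone coupling.
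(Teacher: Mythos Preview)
Your proposal is correct and matches the paper's approach exactly: the paper in fact omits the proof entirely, remarking only that it ``can be obtained via coupling of the $X_{ij}$'', which is precisely your monotone coupling via the uniforms $U_{ij}$. The one place worth tightening is the Markov claim---you argue it for the graph-valued process, but the state space is $\bbM_1$, so only the isomorphism class of the graph is retained; a one-line relabeling-symmetry observation (the edge dynamics is invariant under permutations of $E_n$) is needed to pass the Markov property down to the $\bbM_1$-valued process.
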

The proof can be obtained via coupling of the $X_{ij}$; we leave it out.

\subsection{Feller diffusion with drift}\label{s.feller}

The tree-valued Feller diffusion is the ultra-metric measure space valued process related to the Feller diffusion.
It can be seen as a many particle limit of Galton-Watson processes.
It is presented in \cite{ggr_tvF14} which considers the process $\mfU^{a,b} = (\mfU^{a,b}_t)_{t\geq 0}$ taking values in ultrametric measure spaces, denoted by $\mathbb U$; it is related to the total mass process $(X^{a,b}_t)_{t\geq 0}$ which solves the SDE $\dx X_t = bX_t \dx t + \sqrt{aX_t}\dx B_t$.
Here $a>0$ is the diffusivity and $b \in \R$ is the criticality of the offspring distribution.
The infinitesimal generator of $\mfU^{a,b}$ is given as in \cite{ggr_tvF14}:
\begin{align}
 L^{a,b} \Phi^{m,\phi}(\mfu) = \Phi^{m,2 \bar{\nabla}\phi}(\mfu) + bm\Phi^{m,\phi} + \frac{a}{\bar{\mfu}} \sum_{1\leq k \leq l \leq m} \Phi^{m,\phi\circ \theta_{k,l}}(\mfu) \, .
\end{align}
The notation for $\bar{\nabla} \phi = \sum_{1\leq k < l \leq m} \frac{\partial}{\partial{r_{kl}}} \phi$ and 
$\left( \theta_{k,l} (\underline{\underline{r}}) \right)_{i,j}  := r_{i,j}\1_{\{i\neq l, j\neq l\}} + r_{k,j} \1_{\{i=l\}} + r_{i,k} \1_{\{j=l\}} $
is taken from \cite{gpw_mp}.

\smallskip

It is well-known that for the total mass process one may couple two processes with different criticality and same initial condition.
More precisely, when $a>0$ and $b_1 < b_2 \in \R$, then we may define $X^{a,b_1}$ and $X^{a,b_2}$ on a joint probability space such that almost surely for all 
$t\geq 0$ we have $X_t^{a,b_1} \leq X_t^{a,b_2}$.
One way to prove that result is the classical comparison theorem for SDEs.

The following analogue for the tree-valued Feller diffusion holds true.
\begin{prop}\label{p.GaltonWatson}
 Let $\mfu \in \U$.
 For $a>0$ and $b_1 < b_2 \in \R$ let $\mfU^{a,b_i}$ be a solution of the $(L^{a,b_i},\delta_\mfu)$-martingale problem, $i=1,2$.
 We have for all $t>0$ almost surely,
 \begin{equation}
  \mfU_t^{a,b_1} \lemeasure \mfU_t^{a,b_2} \, .
 \end{equation}
\end{prop}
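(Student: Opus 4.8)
The plan is to couple the two tree-valued Feller diffusions $\mfU^{a,b_1}$ and $\mfU^{a,b_2}$ so that at each time the genealogy of the subcritical one sits inside the genealogy of the supercritical one as a sub-measure. The natural device is the lookdown/particle construction: realize both processes on a common graphical representation (a Galton--Watson particle system, or its lookdown analogue) in which $b_1 < b_2$ means the population in the $b_2$-system dominates that in the $b_1$-system particle by particle, while the resampling/branching events that generate genealogical distances are driven by the \emph{same} Poisson clocks. Concretely, I would run a particle system for the $b_2$-diffusion and select, for each $t$, a sub-population giving the $b_1$-diffusion, with the selection done consistently in $t$ (a thinning that only removes mass) so that the induced ultrametric on the $b_1$-subpopulation is exactly the restriction of the ultrametric on the $b_2$-population. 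Then the map $\tau$ sending each surviving $b_1$-individual to itself as a $b_2$-individual is an isometry of $\supp(\mu^{b_1}_t)$ onto a subset of the $b_2$-space, and $\mu^{b_1}_t \le \mu^{b_2}_t\circ\tau^{-1}$ because the $b_1$-population is a sub-collection carrying (after normalization by the respective total masses, or rather using the unnormalized genealogy-valued processes) no more mass; this is precisely Definition~\ref{d.lemeasure}, so $\mfU_t^{a,b_1}\lemeasure\mfU_t^{a,b_2}$.

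The key steps, in order: first, recall or set up the particle (lookdown) construction of $\mfU^{a,b}$ from \cite{ggr_tvF14}, checking that the generator $L^{a,b}$ above is recovered; the branching term with rate $a/\bar{\mfu}$ and the drift term $bm\Phi^{m,\phi}$ must come from clocks that can be split as ``common part'' plus ``extra birth rate for $b_2$''. Second, build the coupling: use the same Poisson noise for the common branching/coalescence structure and add independent extra births only to the $b_2$-system, so that at all times the $b_2$-genealogy contains the $b_1$-genealogy as a rooted subtree with inherited metric. Third, verify that the pair of coupled processes solves the correct pair of martingale problems $(L^{a,b_1},\delta_\mfu)$ and $(L^{a,b_2},\delta_\mfu)$ — this is where one invokes well-posedness of the martingale problem (so that whatever one constructs with the right generator \emph{is} the process) and the fact that marginally the coupling does not disturb either law. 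Fourth, read off the sub-measure-preserving isometry $\tau$ at a fixed $t$ and conclude via Definition~\ref{d.lemeasure}; since the statement is ``for all $t>0$ almost surely'', note that $\lemeasure$ is closed (Proposition~\ref{p.lemeasure.pots}) so a countable dense set of times plus right-continuity of $t\mapsto\mfU_t$ gives all $t$ simultaneously a.s.

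The main obstacle I expect is the second step: making precise a coupling in which the \emph{genealogical distances}, not merely the total masses, are monotone. For the total-mass SDEs the comparison theorem is elementary, but on the level of trees one must ensure that adding extra mass to the $b_2$-population never \emph{shortens} any distance already present among $b_1$-individuals and never forces a relabeling that breaks the isometry — i.e.\ the extra births in the $b_2$-system must graft new subtrees onto the existing tree without altering the metric on the old leaves. In a lookdown construction this is automatic because coalescence is determined by the ranked levels and past events, and adding higher-level individuals does not touch the genealogy of lower ones; the careful point is to check that the thinning which extracts the $b_1$-subpopulation can be chosen \emph{monotone in $t$} (mass only leaves, never returns) so that the family of inclusions $\tau_t$ is consistent and the restriction of the ultrametric is literally the ultrametric — alternatively, one can bypass monotonicity in $t$ and argue $t$ by $t$, then patch with closedness of $\lemeasure$ as above. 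A secondary technical point is that $\mfU^{a,b}$ is $\bbU$-valued with fluctuating total mass, so one should either work with the version of Definition~\ref{d.lemeasure} for finite (non-normalized) mm-spaces or carry the total-mass comparison $X_t^{a,b_1}\le X_t^{a,b_2}$ alongside the genealogy comparison, which the coupling delivers simultaneously.
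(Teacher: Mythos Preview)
Your outline is sound and would work, but it takes a genuinely different route from the paper. The paper's proof is much shorter and more elementary: it does \emph{not} construct a coupling at the diffusion level. Instead it uses discrete approximation. One recalls from \cite{Gl12} that $\mfU^{a,b_i}$ arises as a scaling limit of tree-valued Galton--Watson processes with offspring law $\Poiss(1+b_i/N)$. Since $\Poiss(1+b_1/N)\le\Poiss(1+b_2/N)$ stochastically, the two Galton--Watson trees can be coupled so that at each generation the $b_1$-offspring set is literally a subset of the $b_2$-offspring set; hence the $b_1$-tree is a subtree of the $b_2$-tree and $\lemeasure$ holds at the discrete level for trivial reasons. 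One then passes to the scaling limit using Proposition~3 of Kamae--Krengel--O'Brien (\cite{kamae1977}), which says that stochastic domination with respect to a closed partial order is preserved under weak convergence; closedness of $\lemeasure$ was established in Proposition~\ref{p.lemeasure.pots}.

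What each approach buys: the paper's argument is essentially a two-line proof once the approximation result from \cite{Gl12} is in place, and it sidesteps entirely the issue you flag as the main obstacle (grafting extra mass without disturbing existing distances) --- at the discrete Galton--Watson level that point is automatic. Your lookdown-style construction, if carried through, would yield a single pathwise coupling valid simultaneously for all $t$, which is a priori stronger than the fixed-$t$ stochastic domination the paper obtains; but it requires either building or citing a particle representation of the tree-valued Feller diffusion with the right monotonicity properties, and the standard Donnelly--Kurtz lookdown is tailored to constant-mass Fleming--Viot rather than branching, so this is real extra work. Your secondary technical remarks (closedness of $\lemeasure$ to patch times, carrying the total-mass comparison alongside) are correct but become unnecessary once one goes through the discrete approximation.
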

This result tells that the tree for $\mfU^{a,b_1}_t$ is really a subtree of $\mfU^{a,b_2}_t$ for any $t\geq 0$.

\begin{proof}
 Recall from \cite{Gl12} that there are Galton-Watson processes such that rescaling them leads to the processes $\mfU^{a,b_1}$ and $\mfU^{a,b_2}$.
 For example one may choose offspring distribution $\Poiss(1+b_1/N)$ and $\Poiss(1+b_2/N)$, respectively.
 It is well-known that $\Poiss(1+b_1/N) \leq \Poiss(1+b_2/N)$ stochastically, so we may couple the two processes such that the offspring distribution of the 
 $b_1$ process is always at most that of the $b_2$ process.
 Now, Proposition 3 in \cite{kamae1977} tells us that this coupling persists in the limit.
\end{proof}

\begin{rem}
 Of course the drift term $bX_t \dx t$ which appears in the last proposition may be changed to more general terms.
 For example one may also compare a process with linear drift and that with an additional quadratic death rate.
 This process is known as the logistic Feller diffusion.
 The same proof strategy allows to show that the process with the quadratic death rate can be coupled and be embedded in the process without that rate.
 This tells us that the genealogy of the logistic Feller diffusion can really be obtained by leaving out some individuals in the genealogical tree of the population without the death rate.
 This is suggested in \cite{le2013trees}.
 The right way to do remove individuals in a symmetric model, however, still remains unclear.
\end{rem}

Besides the proof of Proposition \ref{p.GaltonWatson} there is more indication for the result to hold.
In \cite{ruschendorf2008comparison}'s Remark~2.3~(b) it is mentioned that for
solutions of martingale problems in
partially ordered state spaces there is a generator criterion to deduce stochastic order with respect to a cone of functions.
R\"uschendorf provides a generator criterion for the cone $F$ of increasing functions (in our case $F = \Pi_+$) which allows to deduce $\leq_F$ stochastic dominance.
Even though $\leq_F$ is weaker than stochastic $\lemeasure$ dominance,  we find it instructive to present the easy calculation for the generator:
\begin{align} L^{a,b_1} \Phi^{m,\phi}(\mfu) & = L^{a,b_2} \Phi^{m,\phi}(\mfu)  + (b_1-b_2)m\Phi^{m,\phi}  \leq L^{a,b_2} \Phi^{m,\phi}(\mfu) ,
\end{align}
for all $\Phi^{m,\phi} \in \Pi$ with $\phi \geq 0$.

\subsection{Tree-valued Moran models}\label{ss.TVMM}

In this section we will prove a comparison result for two neural Moran models with different resampling rates. 
The proof depends on a comparison result of two Kingman-coalescents with different coalescing rates. Even though this 
comparison result is not new (on the coalescing level) it is new in the tree-valued setting.

We start with the (graphical) construction of the tree-valued Moran model as in \cite{gpw_mp}. Let $I_N:= \{1,\ldots,N\},\ N \in \mathbb N$ and
\begin{equation}\label{ppp}
\{\eta^{i,j}:\ i,j \in I_N,\ i \not=j\}
\end{equation}
be a realization of a family of independent rate $\gamma$ Poisson point processes, where we call $\gamma > 0$ the resampling rate.
If $\eta^{i,j}(\{t\}) = 1$, we draw an arrow from $(i,t)$ to $(j,t)$.

\begin{itemize}
\item[] For $i,i' \in I_N$, $0\le s< t < \infty$ we say that there is a path from $(i,s)$ to $(i',t)$ if there is a  $n \in \N$, 
$s \le u_1 < u_2 < \cdots < u_n \le t$ and
$j_1,\ldots,j_n \in I_N$ such that for all $k \in \{1,\ldots,n+1\}$ ($j_0:= i, j_{n+1}:=i'$)
$\eta^{j_{k-1},j_k}\{u_k\} = 1$, $\eta^{x,j_{k-1}}((u_{k-1},u_k))= 0$ for all $x \in I_N$.
\end{itemize}

\noindent Note that for all $i \in I_N$ and $0 \le s \le t$ there exists an unique element
\begin{equation}
A_s(i,t) \in I_N
\end{equation}
with the property that there is a path from $(A_s(i,t),s)$ to $(i,t)$. We call $A_s(i,t)$ the {\it ancestor of  $(i,t)$ at time $s$}. \par

Let $r_0$ be a pseudo-ultrametric on $I_N$. We define the pseudo-ultrametric ($i,j \in I_N$):
\begin{equation}\label{eq.ultrametric}
r_t(i,j):=\left\{ \begin{array}{ll}
2(t-\sup\{s \in [0,t]:A_s(i,t)=A_s(j,t)\}),&\quad \textrm{if } A_0(i,t) = A_0(j,t),\\[0.2cm]
2t + r_0(A_0(i,t), A_0(j,t)) ,&\quad \textrm{if } A_0(i,t) \not= A_0(j,t).
\end{array}\right.
\end{equation}

Define  $\mu^N\in \mathcal M_1(I_N)$ by
\begin{equation}
\mu^N= \frac{1}{N} \sum_{k \in I_N}  \delta_k. 
\end{equation}
Now, since $r_t$ is only a pseudo-metric, we  consider the following equivalence relation $\approx_t$  on $I_N$: 
 $x \approx_t y \Leftrightarrow r_t(x,y) = 0$.
We denote by $\tilde I_N^t:= I_N\! /\!\!\approx_t$ the set of equivalence classes and note that we can find a set of representatives $\bar I_N^t$
such that $\bar I_N^t \rightarrow \tilde I_N^t,\  x \to [x]_{\approx_t}$ is a bijection. We define 
\begin{align}
\bar r_t(\bar  i,\bar  j) =  r_t(\bar i,\bar j),\quad \bar \mu^N_t(\{\bar i\}) &=\mu^N(\{[\bar i]_{\approx_t}\}),\qquad  \bar i,\bar j \in \bar I_N^t.
\end{align} 

Then the {\it tree-valued Moran model (TVMM)}, of size $N$ is defined as
\begin{equation}
\mathcal U_t^{\gamma,N} := [\bar I_N^t,\bar r_t,\bar \mu^N_t]. 
\end{equation}

For the proof of the result below, it is better to construct the Moran model in a slightly different way:

\begin{rem}\label{r.alt.Def} Instead of a familiy of Poisson point processes as in (\ref{ppp}) we can also use a 
 rate $\gamma \cdot N(N-1)$  Poisson point process $\eta^\gamma $ and an i.i.d. sequence $(U_n)_{n \in \N} = (U_n^1,U_n^2)_{n \in \N}$ of $I_N \times I_N$-valued random variables with 
\begin{equation}
P(U_1 = (i,j)) = \frac{1}{N \cdot (N-1)} 1(i \neq j).
\end{equation}
We assume  that both are defined on the same probability space and are independent and set
\begin{equation}
\tau_k^\gamma = \inf\{t > \tau^\gamma_{k-1}:\ \eta^\gamma(\{t\}) = 1\},\qquad k \in \N. 
\end{equation}
Then we can construct the tree-valued Moran model as follows: At times $\tau_k^\gamma = t$ we draw an arrow from  $U^1_k = i$ to $U^2_k = j$, i.e.\ we sample two individuals $(i,j)$ independent and uniformly  without replacement of the population $I_N$  and then draw an arrow from $(i,t)$ to $(j,t)$ (see figure \ref{f.graph}). 
\end{rem}

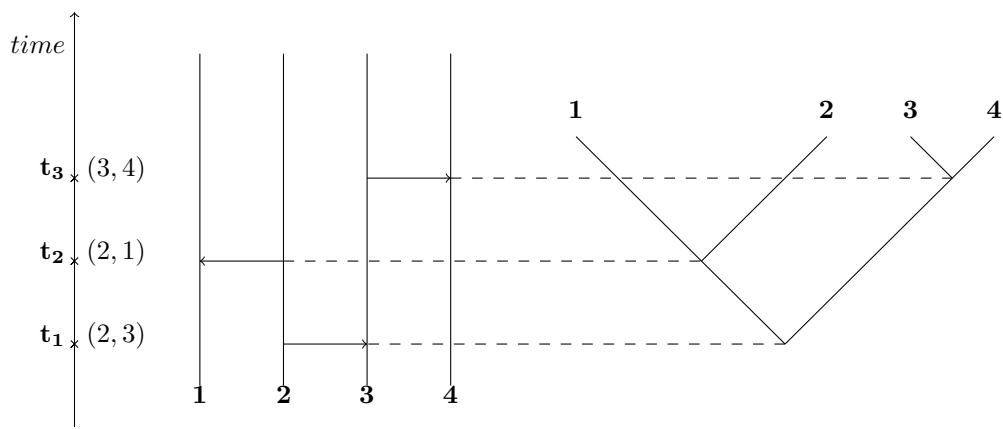
\begin{figure}[ht]
\centering
\begin{tikzpicture}[scale = 0.55]
\draw (1.,4.)-- (1.,-4.);
\draw (3.,4.)-- (3.,-4.);
\draw (5.,4.)-- (5.,-4.);
\draw (7.,4.)-- (7.,-4.);
\draw [->] (-2.,-5.) -- (-2.,5.);
\draw (1.0031720553104262,-3.7817153291833736) node[anchor=north] {$\mathbf{1}$};
\draw (3.0079033207116694,-3.7817153291833736) node[anchor=north] {$\mathbf{2}$};
\draw (4.994893601463344,-3.7817153291833736) node[anchor=north] {$\mathbf{3}$};
\draw (6.999624866864588,-3.7817153291833736) node[anchor=north] {$\mathbf{4}$};
\draw (10.,2.)-- (15.,-3.);
\draw (15.,-3.)-- (20.,2.);
\draw (19.,1.)-- (18.,2.);
\draw (13.,-1.)-- (16.,2.);
\draw (9.997851272641668,2.214737482370784) node[anchor=south] {$\mathbf{1}$};
\draw (15.99430408419583,2.214737482370784) node[anchor=south] {$\mathbf{2}$};
\draw (17.999035349597072,2.214737482370784) node[anchor=south] {$\mathbf{3}$};
\draw (20.003766614998316,2.214737482370784) node[anchor=south] {$\mathbf{4}$};
\draw [dash pattern=on 4pt off 4pt] (7.,1.)-- (19.,1.);
\draw [dash pattern=on 4pt off 4pt] (15.,-3.)-- (5.,-3.);
\draw [->] (5.,1.) -- (7.,1.);
\draw (-1.9950543504666547,-2.788220188807537) node[anchor=east] {$\mathbf{t_1}$};
\draw (-1.9950543504666547,-0.7834889234062947) node[anchor=east] {$\mathbf{t_2}$};
\draw (-1.9950543504666547,1.2212423419949474) node[anchor=east] {$\mathbf{t_3}$};
\draw (-1.9950543504666547,4.2194687477720265) node[anchor=east] {${time}$};
\draw [->] (3.,-1.) -- (1.,-1.);
\draw [->] (3.,-3.) -- (5.,-3.);
\draw [dash pattern=on 4pt off 4pt] (3.,-1.)-- (13.,-1.);
\draw (-1.0015592100908173,1.2212423419949474) node {${(3,4)}$};
\draw (-1.0015592100908173,-0.7834889234062947) node {${(2,1)}$};
\draw (-1.0015592100908173,-2.788220188807537) node {${(2,3)}$};
\begin{scriptsize}
\draw [color=black] (-2.,1.)-- ++(-2.5pt,-2.5pt) -- ++(5.0pt,5.0pt) ++(-5.0pt,0) -- ++(5.0pt,-5.0pt);
\draw[color=black] (-1.8708674579196751,1.3276882498923586) node {};
\draw [color=black] (-2.,-1.)-- ++(-2.5pt,-2.5pt) -- ++(5.0pt,5.0pt) ++(-5.0pt,0) -- ++(5.0pt,-5.0pt);
\draw[color=black] (-1.8708674579196751,-0.6770430155088837) node {};
\draw [color=black] (-2.,-3.)-- ++(-2.5pt,-2.5pt) -- ++(5.0pt,5.0pt) ++(-5.0pt,0) -- ++(5.0pt,-5.0pt);
\draw[color=black] (-1.8708674579196751,-2.6817742809101257) node {};
\end{scriptsize}
\end{tikzpicture}
\caption{\label{f.graph} \footnotesize Graphical construction of the TVMM (i.e.\ the tree on the right side): At times $t_1, t_2, t_3$ we sample to individuals $(x_1^1,x_2^1)$, $(x_1^2,x_2^2)$, $(x_1^3,x_2^3)$ and draw an arrow from $x_1^j$ to $x_2^j$.}
\end{figure}


In the following we will assume that $r_0 \equiv 0$, i.e.\ we start the process in $[\{1\},0,\delta_1]$.

\begin{prop}\label{prop.neutral}
Let $0 \le \gamma,\gamma'$. For all $N \in \N$ and $t \ge 0$, there is a coupling such that
\begin{equation}
P(\mathcal U^{\gamma + \gamma',N}_t\lemetric \mathcal U^{\gamma,N}_t ) = 1.
\end{equation} 
\end{prop}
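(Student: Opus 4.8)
The plan is to couple the two random trees not through a shared family of resampling arrows, but through the self-similarity of the Kingman coalescent. One might first try the \emph{arrow-superset coupling}: realise the rate-$(\gamma+\gamma')$ model using all the Poisson arrows of the rate-$\gamma$ model plus independent extra ones. This has the correct marginals, but it does not help directly: adding arrows reroutes ancestral lineages and changes the branching structure of the tree, so the identity relabelling of $I_N$ is in general \emph{not} a sub-isometry — already for small $N$ one can exhibit realisations in which inserting a single extra arrow strictly increases some pairwise distance — and one would then have to produce an awkward non-trivial relabelling. The coalescent self-similarity avoids all of this.

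First I would record the coalescent representation of $\mathcal U^{\gamma,N}_t$. Assume $t>0$ (the case $t=0$ is trivial: both spaces reduce to a point since $r_0\equiv 0$). Tracing the $N$ ancestral lineages back from time $t$, the induced partition of $I_N$ is a Kingman $N$-coalescent, started from the partition into singletons, in which each pair of blocks merges at rate $2\gamma$ (the exact value of the constant is immaterial for what follows). Writing $T_{ij}$ for the time at which $i$ and $j$ first lie in a common block, formula \eqref{eq.ultrametric} with $r_0\equiv 0$ reads $r_t(i,j)=2\min(T_{ij},t)$, so that $\mathcal U^{\gamma,N}_t=\bigl[\,I_N,\;(i,j)\mapsto 2\min(T_{ij},t),\;\mu^N\,\bigr]$ as an element of $\bbM_1$. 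Almost surely every $T_{ij}>0$, hence for $t>0$ no two leaves are identified when passing to the pseudo-metric quotient, and the weight is exactly $\mu^N$.

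Next I would build the coupling. On one probability space, run a single Kingman $N$-coalescent $(\mathcal K_u)_{u\ge 0}$ with per-pair merge rate $2\gamma$ from singletons, with pairwise coalescence times $T_{ij}$, and set $\alpha:=(\gamma+\gamma')/\gamma\ge 1$ (the case $\gamma=0$ is handled separately, below). By the previous step $\bigl[\,I_N,\,2\min(T_{\cdot\cdot},t),\,\mu^N\,\bigr]$ has the law of $\mathcal U^{\gamma,N}_t$. By the time-rescaling self-similarity of the Kingman coalescent, $(\mathcal K_{\alpha u})_{u\ge 0}$ is a Kingman $N$-coalescent with per-pair rate $2(\gamma+\gamma')$, and its pairwise coalescence times are $\alpha^{-1}T_{ij}$; hence, again by the previous step, $\bigl[\,I_N,\,2\min(\alpha^{-1}T_{\cdot\cdot},t),\,\mu^N\,\bigr]$ has the law of $\mathcal U^{\gamma+\gamma',N}_t$. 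This defines the desired coupling. On this coupled space take $\tau=\mathrm{id}_{I_N}$: it preserves $\mu^N$, and since $\alpha\ge 1$ one has $2\min(\alpha^{-1}T_{ij},t)\le 2\min(T_{ij},t)$ for all $i,j\in I_N$, so $\tau$ is a measure-preserving sub-isometry and therefore, by Definition~\ref{d.lemetric}, $\mathcal U^{\gamma+\gamma',N}_t\lemetric\mathcal U^{\gamma,N}_t$ almost surely. For the leftover degenerate case $\gamma=0<\gamma'$: then $\mathcal U^{0,N}_t$ has all distances equal to $2t$, while every $\mathcal U^{\gamma',N}_t$ built as above has all distances $\le 2t$, so $\mathrm{id}_{I_N}$ works under any coupling; and $\gamma'=0$ is trivial by reflexivity of $\lemetric$ (Proposition~\ref{p.lemetric.pots}).

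The step I expect to be the main obstacle is the coalescent representation itself: one must justify carefully that the graphical construction \eqref{ppp}--\eqref{eq.ultrametric} with $r_0\equiv 0$ really produces, as a deterministic functional of the realisation, the space $\bigl[\,I_N,\,2\min(T_{\cdot\cdot},t),\,\mu^N\,\bigr]$ with $T$ the coalescence times of an honest Kingman $N$-coalescent, and that this functional is well defined after the pseudo-metric quotient. Once this is in place, the self-similarity of the coalescent does the real work, the identity map is automatically a sub-isometry, and the remaining verifications — measure preservation, the elementary inequality $\min(\alpha^{-1}x,t)\le\min(x,t)$, and the existence of such an almost-sure coupling (equivalently, $\lemetric$-stochastic dominance, since $\lemetric$ is a closed partial order on the Polish space $\bbM_1$) — are routine.
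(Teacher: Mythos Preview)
Your proof is correct and follows essentially the same three-step route as the paper: pass to the Kingman $N$-coalescent representation of $\mathcal U^{\gamma,N}_t$, couple the two coalescents so that the faster one has uniformly smaller coalescence times with the same partition structure, and then take the identity on $I_N$ as the measure-preserving sub-isometry. Your explicit use of the deterministic time-change $u\mapsto\alpha u$ is simply a clean concrete realisation of the coupling the paper invokes abstractly in its Step~2 (it automatically gives $\tau_i'=\alpha^{-1}\tau_i\le\tau_i$ and $\kappa^{\gamma+\gamma',N}(\tau_i')=\kappa^{\gamma,N}(\tau_i)$), so the two arguments coincide.
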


\begin{proof}
We will only sketch the proof. For details about Kingman-coalescents see for example \cite{Ber}. 
\begin{proofsteps}
\step In this step we give the connection of $\mathcal U^{\gamma,N}$ and a Kingman $N$-coalescent with coalescing rate $\gamma$. \par  
For fixed $t \ge 0$, we set $A_h(i):=A_{t-h}(i,t)$, $0 \le h \le t$ and $[N]:= \{1,\ldots,N\}$. Then $\{A_h(i):\ i \in [N]\}$ can be described as a family of processes in $[N]^N$ that starts in $A_0(i) = i$ and has the following dynamic: Whenever $\eta^\gamma(\{t-h\}) = 1$ (see Remark~\ref{r.alt.Def}), we pick independent
and uniformly without replacement two individuals $i\neq j$ and have the following transition: 
\begin{equation}
A_{h-}(k) \rightarrow  A_{h}(k) = i, \qquad \forall k \in \{l \in [n]: A_{h-}(l) = j\}.
\end{equation}
It is now straightforward to see that the time it takes to decrease the number of different labels by $1$, given there are $k$ different labels, is exponential distributed with parameter $\gamma\cdot \binom{k}{2}$ and that the two labels (the one that replaces and the one that is replaced) are sampled uniformly without replacement under all existing labels. If we define  
\begin{equation}
\kappa_i(h) = \{j\in [N]:\ A_{h}(j) = A_{h}(i)\},
\end{equation}
this implies $\kappa = (\{\kappa_1(h),\ldots,\kappa_N(h)\})_{0\le h \le t}$ is a Kingman $N$-coalescent (up to time $t$). 
If we know define 
\begin{equation}
\mathcal V_t^N = [\{1,\ldots,N\},r^\kappa_t,\frac{1}{N}\sum_{k = 1}^N \delta_k],
\end{equation}
where 
\begin{equation}
r^\kappa_t(i,j) = 2\inf\{h \ge 0:\ i,j \in \kappa_k(h) \textrm{ for some }k\} \wedge 2t.
\end{equation}
then the above implies $\mathcal L(\mathcal V_t^N) = \mathcal L(\mathcal U_t^{\gamma,N})$.
\step Let $\kappa^{\gamma,N}$ and $\kappa^{\gamma + \gamma',N}$ be two Kingman $N$-coalescents with coalescing rate 
$\gamma$ and $\gamma+\gamma'$. Then one can couple this processes such that the coalescing times $\tau_i'$, $i = 1,\ldots,N-1$ of  $\kappa^{\gamma+\gamma',N}$ are dominated by the times $\tau_i$, $i = 1,\ldots,N-1$ of  $\kappa^{\gamma,N}$, i.e.\  $\tau_i' \le \tau_{i}$ for all $i = 1,\ldots,N-1$ almost surely. In addition to this property it is also possible to get a coupling such that $\kappa^{\gamma,N}(\tau_i) = \kappa^{\gamma+\gamma',N}(\tau_i')$ for all $i = 1,\ldots,N-1$.
\step Using the two steps above, we get the result with the identity as measure-preserving sub-isometry. 
\end{proofsteps}
\end{proof}

\subsection{Tree-valued Fleming-Viot processes}\label{sec.CompFV}

Let $(\mathcal U^{\gamma,N}_t)_{t \ge 0}$ be the TVMM with $\mathcal U^{\gamma,N}_0 = [\{1\},0,\delta_1]$. In this situation it is known that the TVMM  converges for $N \rightarrow \infty$, where the limit $(\mathcal U^{^\gamma}_t)_{t \ge 0}$  can be characterized as a
solution of a well-posed martingale problem. $(\mathcal U^{\gamma}_t)_{t \ge 0}$ is called the tree-valued Fleming-Viot process, TVFV, (see \cite{gpw_mp} or \cite{DGP12} for Details). As a consequence of Proposition \ref{prop.neutral}, we get:

\begin{prop}\label{p.FV}
Let $0 < \gamma < \gamma'$ and $t \ge 0$. Then there is a law $\lambda^{\gamma',\gamma}$ on $\mathbb U \times \mathbb U$ with marginals $\mathcal U^{\gamma'}_t$ and $\mathcal U^{\gamma}_t$ such that 
\begin{equation}
\lambda^{\gamma,\gamma'}(\{(\mfx,\mfy):\ \mfx\lemetric \mfy\}) = 1,
\end{equation}
or, in other words, there is a coupling such that $\mathcal U^{\gamma'}_t \lemetric  \mathcal U^{\gamma}_t$ almost surely. 
\end{prop}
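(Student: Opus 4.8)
The plan is to lift the finite-$N$ coupling provided by Proposition~\ref{prop.neutral} to the Fleming--Viot limit by a tightness-and-closedness argument. Fix the time $t\ge 0$, write $\gamma' = \gamma + \delta$ with $\delta := \gamma'-\gamma > 0$, and apply Proposition~\ref{prop.neutral} with the pair $(\gamma,\delta)$. This produces, for every $N\in\N$, a probability measure $\mu_N$ on $\mathbb U\times\mathbb U$ with first marginal $\mathcal L(\mathcal U^{\gamma',N}_t)$, second marginal $\mathcal L(\mathcal U^{\gamma,N}_t)$, and $\mu_N(C)=1$, where $C := \{(\mfx,\mfy):\ \mfx\lemetric\mfy\}$. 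The task is then to extract a weak limit of the $\mu_N$ with the right marginals that still concentrates on $C$.

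The second step is the passage $N\to\infty$. It is known (see \cite{gpw_mp}, \cite{DGP12}) that $\mathcal U^{\gamma,N}_t$ converges in distribution, Gromov-weakly, to $\mathcal U^{\gamma}_t$, and likewise with $\gamma'$; in particular the two marginal families $\{\mathcal L(\mathcal U^{\gamma',N}_t)\}_N$ and $\{\mathcal L(\mathcal U^{\gamma,N}_t)\}_N$ are tight, and tightness of both marginal families forces tightness of $\{\mu_N\}_N$ on the Polish space $\mathbb U\times\mathbb U$. By Prohorov's theorem some subsequence satisfies $\mu_N\Rightarrow\lambda^{\gamma,\gamma'}$. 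Continuity of the coordinate projections identifies the marginals of $\lambda^{\gamma,\gamma'}$ as $\mathcal L(\mathcal U^{\gamma'}_t)$ and $\mathcal L(\mathcal U^{\gamma}_t)$. Finally $C$ is closed, since $\lemetric$ is a closed partial order (Proposition~\ref{p.lemetric.pots}), so the portmanteau theorem gives $\lambda^{\gamma,\gamma'}(C)\ge\limsup_N\mu_N(C)=1$, which is precisely the assertion.

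There is an essentially equivalent route that avoids handling explicit couplings: by Proposition~\ref{prop.neutral} together with Strassen's theorem (Proposition~\ref{p.Strassen.1}) one has $\E[f(\mathcal U^{\gamma',N}_t)]\le\E[f(\mathcal U^{\gamma,N}_t)]$ for every bounded continuous $\lemetric$-increasing $f$ and every $N$; letting $N\to\infty$ (legitimate as $f$ is bounded and continuous and the marginals converge weakly) yields $\mathcal U^{\gamma'}_t\lemetric_{st}\mathcal U^{\gamma}_t$, and one more application of Strassen's theorem delivers $\lambda^{\gamma,\gamma'}$. I expect the only genuinely delicate point in either route to be the commutation of the order constraint with the weak limit --- this is exactly where closedness of $\lemetric$ and polishness of $(\mathbb U,d_{\text{GPr}})$ are used; the underlying TVMM-to-TVFV convergence is invoked as a black box.
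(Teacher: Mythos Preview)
Your argument is correct and matches the paper's own proof, which simply cites Proposition~\ref{prop.neutral}, closedness of $\lemetric$ (Proposition~\ref{p.lemetric.pots}), Proposition~\ref{p.prop.metric}\,\eqref{p.lemetric.compact}, and Proposition~3 of \cite{kamae1977}; the last reference packages exactly the tightness--Prohorov--portmanteau passage you spell out (and your Strassen alternative is likewise a standard reformulation of it). The only cosmetic difference is that the paper invokes compactness of lower sets (Proposition~\ref{p.prop.metric}\,\eqref{p.lemetric.compact}) where you use the more elementary fact that tightness of both marginals yields tightness of the couplings.
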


\begin{proof}
This follows by Proposition~\ref{prop.neutral}  together with Proposition~\ref{p.lemetric.pots} and Proposition~\ref{p.prop.metric}, \ref{p.lemetric.compact} (see also Proposition~3 in \cite{kamae1977}).
\end{proof}

If we define, for $t \ge 0$, $R^\gamma_t$ as the distance of two randomly chosen points from $\mathcal U^{\gamma}_t$, i.e. 
\begin{equation}
P(R^\gamma_t \in A) = E[\nu^{2,\mathcal U^{\gamma}_t}(A)],
\end{equation}
for $A \subset \mathbb R_+$ measurable, then we get as a consequence (see Proposition~\ref{p.polynom.order}): 

\begin{cor}
For all $t \ge 0$, and $0 < \gamma < \gamma'$, there is a coupling such that 
\begin{equation}
P(R^{\gamma'}_t \le R^{\gamma}_t) = 1.
\end{equation}
\end{cor}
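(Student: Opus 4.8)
The plan is to reduce the statement to a stochastic comparison of the two \emph{real-valued} random variables $R^{\gamma}_t$ and $R^{\gamma'}_t$, and then to invoke the elementary fact that stochastically ordered real random variables admit an almost-sure coupling. First I would apply Proposition~\ref{p.FV}: for $0<\gamma<\gamma'$ and the fixed time $t$ there is a coupling of $\mathcal U^{\gamma'}_t$ and $\mathcal U^{\gamma}_t$ under which $\mathcal U^{\gamma'}_t \lemetric \mathcal U^{\gamma}_t$ almost surely. By Theorem~\ref{p.lemetric.char}, the implication ``(\ref{t.lemetric.char.1})~$\Rightarrow$~(\ref{t.lemetric.char.4})'' applied with $m=2$ and the increasing Borel set $[r,\infty)\subset\R$ gives, on this coupling space,
\[
\nu^{2,\mathcal U^{\gamma'}_t}\big([r,\infty)\big) \;\le\; \nu^{2,\mathcal U^{\gamma}_t}\big([r,\infty)\big) \qquad \text{almost surely, for every } r\ge 0 .
\]

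Next I would take expectations in this inequality and use the defining relation $P(R^{\gamma}_t\in A)=E[\nu^{2,\mathcal U^{\gamma}_t}(A)]$ to obtain $P(R^{\gamma'}_t\ge r)\le P(R^{\gamma}_t\ge r)$ for all $r\ge 0$; that is, $R^{\gamma'}_t$ is stochastically dominated by $R^{\gamma}_t$. (Equivalently, the same computation shows $\mathcal U^{\gamma'}_t \leq_{\Pi_{+,\nearrow}} \mathcal U^{\gamma}_t$, so that Proposition~\ref{p.polynom.order} applies verbatim.) Finally, to upgrade the stochastic order to an almost-sure coupling I would use Proposition~\ref{p.Strassen.1} with $E=\R$ and the usual (closed) order, or, more simply, the quantile coupling: writing $F_{\gamma},F_{\gamma'}$ for the distribution functions of $R^{\gamma}_t,R^{\gamma'}_t$, stochastic dominance means $F_{\gamma'}\ge F_{\gamma}$ pointwise, hence $F_{\gamma'}^{-1}\le F_{\gamma}^{-1}$, and $\big(F_{\gamma'}^{-1}(U),F_{\gamma}^{-1}(U)\big)$ for a single uniform variable $U$ is a coupling with $R^{\gamma'}_t\le R^{\gamma}_t$ almost surely.

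There is no serious obstacle here; the only point requiring a little care is that one should \emph{not} attempt to build the coupling of the distances ``pointwise'' on the coupling space of the trees by pushing a sampled pair of points through the measure-preserving sub-isometry $\tau$ of Definition~\ref{d.lemetric}, because $\tau$ depends on the (random) pair of trees and the resulting measurable-selection issue is precisely what the detour through the one-dimensional stochastic order avoids. All other steps are routine given Proposition~\ref{p.FV} and Theorem~\ref{p.lemetric.char}.
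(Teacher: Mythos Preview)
Your proposal is correct and follows essentially the same route as the paper: the paper simply cites Proposition~\ref{p.polynom.order} as a consequence of Proposition~\ref{p.FV}, and your argument unwinds exactly that chain (the $\lemetric$-coupling of the trees, the induced inequality on second-order distance distributions via Theorem~\ref{p.lemetric.char}, and then Strassen/quantile coupling on $\R$). Your explicit mention of the quantile coupling and the cautionary remark about not trying to push sampled pairs through the random sub-isometry $\tau$ are helpful elaborations, but the underlying strategy is identical.
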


Another interesting observation is the following: By  Theorem 3 in \cite{gpw_mp}, there is a unique invariant law $\mathcal U^{\gamma}_\infty$ for the TVFV. Combining  Proposition~\ref{p.FV}  with Proposition~\ref{p.lemetric.pots} and Proposition~\ref{p.prop.metric}, \ref{p.lemetric.compact} shows that the result in Proposition~\ref{p.FV} stays true if we replace $t $ by $\infty$. If we now apply Proposition~\ref{c.Eurandom}, we get: 

\begin{prop} Let $0 < \gamma < \gamma'$, then for all $\lambda > 0$:
\begin{equation}
d_W^{\lambda}(\mathcal U^{\gamma}_\infty,\mathcal U^{\gamma'}_\infty)  = \frac{\gamma'}{\gamma'+\lambda} - \frac{\gamma}{\gamma + \lambda}.
\end{equation}
\end{prop}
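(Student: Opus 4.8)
The plan is to feed the stochastic domination of the stationary tree-valued Fleming--Viot states into Proposition~\ref{c.Eurandom} and then evaluate the two resulting expectations from the elementary law of the stationary sampled distance.

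First I would record that, since $\gamma<\gamma'$, a larger resampling rate speeds up coalescence, so the discussion preceding the statement (closedness of $\lemetric$ via Proposition~\ref{p.lemetric.pots} together with the compactness in Proposition~\ref{p.prop.metric}(\ref{p.lemetric.compact}), applied to Proposition~\ref{p.FV}) yields a coupling of $\mathcal U^{\gamma'}_\infty$ and $\mathcal U^{\gamma}_\infty$ supported on $\{(\mfx,\mfy):\mfx\lemetric\mfy\}$, hence a fortiori on $\{(\mfx,\mfy):\mfx\legen\mfy\}$. By Strassen's theorem (Proposition~\ref{p.Strassen.1}), equivalently by Definition~\ref{d.stochastic.order}, this gives $\mathcal U^{\gamma'}_\infty\legen_{st}\mathcal U^{\gamma}_\infty$, and Proposition~\ref{c.Eurandom} then produces, for every $\lambda>0$,
\[
d_W^\lambda(\mathcal U^{\gamma'}_\infty,\mathcal U^{\gamma}_\infty)
=\E\!\left[\int(1-e^{-\lambda r})\,\nu^{2,\mathcal U^\gamma_\infty}(dr)\right]
-\E\!\left[\int(1-e^{-\lambda r})\,\nu^{2,\mathcal U^{\gamma'}_\infty}(dr)\right].
\]
Since $\dgEur^\lambda$ is a metric, $d_W^\lambda$ is symmetric, so the left-hand side equals $d_W^\lambda(\mathcal U^{\gamma}_\infty,\mathcal U^{\gamma'}_\infty)$ from the statement.

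It then remains to identify $\E\bigl[\int(1-e^{-\lambda r})\,\nu^{2,\mathcal U^\gamma_\infty}(dr)\bigr]=1-\E[e^{-\lambda R^\gamma_\infty}]$, where $R^\gamma_\infty$ is the distance between two independently sampled points of $\mathcal U^\gamma_\infty$, i.e.\ the random variable with law $\E[\nu^{2,\mathcal U^\gamma_\infty}(\cdot)]$. In the graphical construction underlying Proposition~\ref{prop.neutral}, two tagged lineages coalesce exactly when an arrow is drawn between their current positions, which happens at the constant rate $2\gamma$; hence at stationarity (where the truncation at $2t$ in \eqref{eq.ultrametric} is inactive) $R^\gamma_\infty$ is twice an $\Exp(2\gamma)$ variable, i.e.\ $R^\gamma_\infty\sim\Exp(\gamma)$ --- consistent with the value $\E[R^\gamma_\infty]=1/\gamma$ announced in the introduction and with \cite{gpw_mp,DGP12}. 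Therefore $\E[e^{-\lambda R^\gamma_\infty}]=\gamma/(\gamma+\lambda)$, so
\[
\E\!\left[\int(1-e^{-\lambda r})\,\nu^{2,\mathcal U^\gamma_\infty}(dr)\right]=\frac{\lambda}{\gamma+\lambda},
\]
and likewise with $\gamma$ replaced by $\gamma'$; plugging this into the identity above gives $d_W^\lambda(\mathcal U^{\gamma}_\infty,\mathcal U^{\gamma'}_\infty)=\frac{\lambda}{\gamma+\lambda}-\frac{\lambda}{\gamma'+\lambda}=\frac{\gamma'}{\gamma'+\lambda}-\frac{\gamma}{\gamma+\lambda}$.

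I do not expect a serious obstacle: the substantial ingredients --- Theorem~\ref{t.gen.Eur.order} behind Proposition~\ref{c.Eurandom}, Strassen's theorem, and the upgrade of Proposition~\ref{p.FV} from finite $t$ to $t=\infty$ --- are already in hand. The one place demanding care is the bookkeeping for $R^\gamma_\infty$: one must keep straight the factor $2$ in the ultrametric \eqref{eq.ultrametric} and the pairwise merge rate $2\gamma$, since an off-by-two there would change the $\lambda$-dependence of the final formula; the sanity check against the known mean $1/\gamma$ of the stationary pairwise distance pins the normalization down.
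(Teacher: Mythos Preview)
Your proof is correct and follows the same route as the paper: extend the coupling of Proposition~\ref{p.FV} to $t=\infty$ via closedness and compactness, apply Proposition~\ref{c.Eurandom}, and then use that $R^\gamma_\infty\sim\Exp(\gamma)$. The paper simply cites \cite{DGP12} for this last fact, whereas you supply the coalescent calculation; your bookkeeping of the pairwise merge rate $2\gamma$ together with the factor $2$ in \eqref{eq.ultrametric} is exactly what is needed to land on $\Exp(\gamma)$.
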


\begin{proof}
Since $R^\gamma_\infty$ is $\Exp(\gamma)$ distributed (see for example Remark~3.16 in \cite{DGP12}) 
the result follows directly from the above discussion. 
\end{proof}


\bibliography{order}
\bibliographystyle{apt}

\end{document}